\theoremstyle{plain}
\newtheorem{theorem}{Theorem}[section]
\newtheorem{lemma}[theorem]{Lemma}
\newtheorem{proposition}[theorem]{Proposition}
\newtheorem{corollary}[theorem]{Corollary}
\newtheorem{definition}[theorem]{Definition}
\theoremstyle{definition}
\newtheorem{example}[theorem]{Example}
\newtheorem{remark}[theorem]{Remark}
\theoremstyle{remark}
\newcommand{\C}{\mathbb{C}}
\newcommand{\cS}{\mathcal{S}}
\numberwithin{equation}{section}
\definecolor{diskmid}{gray}{0.8}
	\definecolor{line}{rgb}{0,0,1}
	\definecolor{diskin}{gray}{0.8}%
	\definecolor{diskout}{gray}{0.8}
\def\cross{
\psset{unit=0.25cm}
\begin{pspicture}[shift=-0.5](-1,-1)(1,1)
			%\psgrid
			\pscircle[fillstyle=solid,fillcolor=diskin,linewidth=0.8pt,linestyle=dotted](0,0){1}
			\degrees[8]
			\psline[linecolor= line,linewidth=1pt](1;1)(1;5)
			\psline[linecolor= blue,linewidth=1pt](1;3)(0.25;3)
			\psline[linecolor= blue,linewidth=1pt](1;7)(0.25;7)	
		\end{pspicture}
		\psset{unit=0.5cm}
}
\def\monoid{
\psset{unit=0.25cm}
\begin{pspicture}[shift=-0.5](-1,-1)(1,1)
 			%\psgrid
			\pscircle[fillstyle=solid,fillcolor=diskin,linewidth=0.8pt,linestyle=dotted ](0,0){1}
			\degrees[8]
			\pscurve[linecolor= line,linewidth=1.2pt](1;3)(0.9;3)(0.4;2)(0.9;1)(1;1)
			\pscurve[linecolor= line,linewidth=1.2pt](1;5)(0.9;5)(0.4;6)(0.9;7)(1;7)
		\end{pspicture}  
		\psset{unit=0.5cm}
	}
\def\id{	
\psset{unit=0.25cm}
\begin{pspicture}[shift=-0.5](-1,-1)(1,1)
			%\psgrid
			\pscircle[fillstyle=solid,fillcolor=diskin,linewidth=0.8pt,linestyle=dotted](0,0){1}
			\degrees[8]
			\pscurve[linecolor= line,linewidth=1.2pt](1;5)(0.9;5)(0.4;4)(0.9;3)(1;3)
			\pscurve[linecolor= line,linewidth=1.2pt](1;1)(0.9;1)(0.4;0)(0.9;7)(1;7)
		\end{pspicture} 
		\psset{unit=0.5cm}
		}
\title{The skein category of the annulus}
\date{\today}                                          
\author{K. Al Qasimi}
\address{KdV Institute for Mathematics, University of Amsterdam, Science Park 105-107, 1098 XG Amsterdam, The Netherlands.}
\email{s.k.s.k.s.alqasemi@uva.nl}
\author{J.V. Stokman}
\address{KdV Institute for Mathematics, University of Amsterdam, Science Park 105-107, 1098 XG Amsterdam, The Netherlands.}
\email{j.v.stokman@uva.nl}
\begin{document}
\maketitle

\begin{abstract}
We construct the skein category $\mathcal{S}$ of the annulus and show that it is equivalent to the affine Temperley-Lieb category of Graham and Lehrer. It leads to a skein theoretic description of the extended affine Temperley-Lieb algebras. We construct an endofunctor of $\mathcal{S}$ that corresponds, on the level of tangle diagrams, to the insertion of an arc connecting the inner and outer boundary of the annulus. We use it to define and construct towers of extended affine Temperley-Lieb algebra modules. It allows us to construct a tower of modules acting on spaces of link patterns on the punctured disc which play an important role in the study of loop models. In case of trivial Dehn twist we show that the direct sum of the representation spaces of the link pattern tower defines a graded algebra that may be regarded as a relative version of the Roger-Yang skein algebra of arcs and links on the punctured disc. We also describe the link pattern tower in terms of fused extended affine Temperley-Lieb algebra modules.
\end{abstract}

\section{Introduction}

In \cite{Kau, Kau2} Kauffman constructed a knot invariant based on an elementary combinatorial rule
for eliminating crossings in the associated knot diagram. In doing so he introduced the two skein relations, the Kauffman skein relation \eqref{kauffman} and the loop removal relation \eqref{loopremoval}.  It has led to skein theory, the study of knots and links in 3-manifolds modulo the Kauffman skein relation and the loop removal relation, see, e.g., \cite{Tur,Prz2, Mor, BW}. The current paper deals with the 3-manifold $A\times [0,1]$, with $A$ the annulus in the complex plane.

We introduce and study the skein category $\mathcal{S}$ of the annulus $A$. It is the linear category with objects the nonnegative integers and morphisms $\textup{Hom}_{\mathcal{S}}(m,n)$ the linear skein of the annulus with a.
In other words, $\textup{Hom}_{\mathcal{S}}(m,n)$ consists of 
the ambient isotopy classes of $(m,n)$-tangle diagrams on the annulus modulo the equivalence relation generated by the  Kauffman 
skein relation and the loop removal relation. The skein category $\mathcal{S}$ is in fact a strict monoidal category with the tensor product obtained from the relative Kauffman bracket skein product introduced in Przytycki and Sikora \cite{PrzSik}.

Following closely Przytycki \cite[\S 3]{Prz}, we construct a relative version of the Kauffman bracket  
to prove that the skein category $\mathcal{S}$ is equivalent to Graham and Lehrer's \cite{GL}  affine Temperley-Lieb category. As a consequence it follows that the endomorphism algebra
$\textup{End}_{\mathcal{S}}(n):=\textup{Hom}_{\mathcal{S}}(n,n)$ is
isomorphic to Green's \cite{Gre} $n$-affine diagram algebra, also known as the (extended) affine Temperley-Lieb algebra. 

We will define an endofunctor $\mathcal{I}$ of $\mathcal{S}$ called the arc-insertion functor, which on the level of morphisms inserts a new arc connecting the inner and outer boundary of the annulus in a particular way while undercrossing all arcs it meets along the way. On the level of endomorphisms it provides a tower of algebras 
\[
\textup{End}_{\mathcal{S}}(0)\overset{\mathcal{I}_0}{\longrightarrow}\textup{End}_{\mathcal{S}}(1)\overset{\mathcal{I}_1}{\longrightarrow}
\textup{End}_{\mathcal{S}}(2)\overset{\mathcal{I}_2}{\longrightarrow}\cdots
\] 
with connecting maps $\mathcal{I}_n$ the algebra maps 
$\mathcal{I}|_{\textup{End}_{\mathcal{S}}(n)}: \textup{End}_{\mathcal{S}}(n)\rightarrow
\textup{End}_{\mathcal{S}}(n+1)$. This tower 
was considered before in the context of knot theory \cite{AlH} and in the context of fusion of extended affine Temperley-Lieb algebra modules \cite{GS} respectively.
It differs from the arc-tower from e.g. \cite{GS,CMPX}, 
which is defined with respect to the two-step algebra embedding $\textup{End}_{\mathcal{S}}(n)\rightarrow\textup{End}_{\mathcal{S}}(n+2)$ that corresponds to the identification of an idempotent subalgebra of $\textup{End}_{\mathcal{S}}(n+2)$ with $\textup{End}_{\mathcal{S}}(n)$. 

We introduce and study towers 
\[
V_0\overset{\phi_0}{\longrightarrow}V_1\overset{\phi_1}{\longrightarrow}V_2
\overset{\phi_2}{\longrightarrow}V_3\overset{\phi_3}{\longrightarrow}\cdots
\]
of extended affine Temperley-Lieb algebra modules. These are chains of
left $\textup{End}_{\mathcal{S}}(n)$-modules $V_n$ ($n\in\mathbb{Z}_{\geq 0}$) connected by
morphisms $\phi_n: V_n\rightarrow\textup{Res}^{\mathcal{I}_n}(V_{n+1})$ of 
$\textup{End}_{\mathcal{S}}(n)$-modules, where $\textup{Res}^{\mathcal{I}_n}(V_{n+1})$
is the $\textup{End}_{\mathcal{S}}(n+1)$-module $V_{n+1}$ viewed as 
$\textup{End}_{\mathcal{S}}(n)$-module via the
algebra map $\mathcal{I}_n: \textup{End}_{\mathcal{S}}(n)\rightarrow\textup{End}_{\mathcal{S}}(n+1)$. 

Our motivation for studying such towers stems from integrable models in statistical physics with extended affine Temperley-Lieb algebra symmetry. Examples are inhomogeneous dense loop models and inhomogeneous XXZ spin-$\frac{1}{2}$ chains with quasi-periodic boundary conditions, see, e.g.,
\cite{KP,FZJZ, dGP} and references therein. In this context the representation space $V_n$ of the tower represents the state space of the model at system size $n$ and the connecting maps relate the models of different system sizes.

We introduce a special tower of extended affine Temperley-Lieb algebra modules, which we will
call the {\it link pattern tower}. It depends on a free parameter $v$, called the {\it twist weight} of the
tower. 
For even $n$ the representation space is spanned by ambient isotopy classes of $(0,n)$-tangle diagrams
in $A$ without crossings and without loops, connecting $n$ marked points on the outer boundary
of $A$.
For odd $n$ the tangle diagrams include a defect line
connecting the outer boundary to the inner boundary, 
 and we add the rule that Dehn twists of the defect line may be removed by $v$
(see \eqref{Dehnremoval}).
The $\textup{End}_{\mathcal{S}}(n)$-action is described as follows. The skein class of an $(n,n)$-tangle diagram on the annulus acts on a diagram $D\in V_n$ by placing $D$
inside the $(n,n)$-tangle diagram, removing crossings
and contractible loops by the skein relations, and removing  
noncontractible loops by a particular weight factor depending on $v$ (see \eqref{nonloopremoval}).

For even $n$ the connecting maps $\phi_n: V_n\rightarrow V_{n+1}$ of the link pattern tower correspond, from the skein
theoretic perspective,
 to the insertion of a defect line, undercrossing all arcs it meets along the way. These maps were considered before in \cite{FZJZ} in the study of the inhomogeneous dense loop model on the half-infinite cylinder. The connecting maps $\phi_n: V_n\rightarrow V_{n+1}$ for odd $n$ are more subtle.
{}From a skein theoretic perspective they can be described as follows. The connecting
map $\phi_n$ acts on a diagram by detaching the defect line from the inner boundary and reconnecting it to the outer boundary in two different ways, either encircling the hole of the annulus before reattaching it to the outer boundary, or not. These two contributions are given explicit weights depending on the twist weight $v$ and on the Temperley-Lieb algebra parameter, see Theorem \ref{intertwinerstower}. 

We show that the link pattern tower is nondegenerate for generic parameter values, in the sense that the induced morphisms
$\widehat{\phi}_n: \textup{Ind}^{\mathcal{I}_n}(V_n)\rightarrow V_{n+1}$ of $\textup{End}_{\mathcal{S}}(n+1)$-modules are surjective, where $\textup{Ind}^{\mathcal{I}_n}(V_n)$ is the 
$\textup{End}_{\mathcal{S}}(n+1)$-module obtained by inducing $V_n$ along the algebra map $\mathcal{I}_n: \textup{End}_{\mathcal{S}}(n)\rightarrow
\textup{End}_{\mathcal{S}}(n+1)$. We relate the link pattern tower to the recently introduced fusion \cite{GS} of extended affine Temperley-Lieb algebra modules. We construct for
each $n\in\mathbb{Z}_{\geq 0}$ a fused $\textup{End}_{\mathcal{S}}(n+1)$-module $W_{n+1}$ and a morphism
$\psi_{n}: W_{n+1}\rightarrow V_{n+1}$ of $\textup{End}_{\mathcal{S}}(n+1)$-modules such that $\widehat{\phi}_n$ factorizes through $\psi_n$. The $\textup{End}_{\mathcal{S}}(n+1)$-module $W_{n+1}$ is obtained
by fusing the 
$\textup{End}_{\mathcal{S}}(n)$-module $V_n$ with an one-dimensional 
$\textup{End}_{\mathcal{S}}(1)$-module.

For twist weight $v=1$ 
the representation spaces of the link pattern tower may be naturally identified with
spaces of link patterns on the punctured disc $\mathbb{D}^\ast=\mathbb{D}\setminus\{0\}$ by shrinking the hole of the annulus
to a point. The resulting modules play an important role in the description of the inhomogeneous dense  loop models on the half-infinite cylinder (see, e.g., \cite{KP,FZJZ}). We show that in this case the direct sum of the representation spaces of the link pattern tower is a graded algebra, and as such may be viewed as a relative version of Roger's and Yang's \cite[Def. 2.3]{RY}
skein algebra of arcs and links on the punctured disc $\mathbb{D}^*$. In this skein algebra perspective multiple endpoints of arcs in $\mathbb{D}^*\times [0,1]$ may connect to the pole $\{0\}\times [0,1]$ but each line segment $\{\xi\}\times [0,1]$ above the marked points $\xi$ on the outer boundary
of $\mathbb{D^*}$ is met by only one endpoint. The number of endpoints on $\partial\mathbb{D}\times[0,1]$ is the grading of the associated element in the algebra.
In this identification our connecting maps $\phi_n$ for $n$ odd relate to the {\it puncture-skein relation} in \cite{RY}, which is the skein theoretic reduction rule when multiple arcs connect to the centre pole $\{0\}\times [0,1]$. In fact, the connecting map $\phi_n$, for each $n\in\mathbb{Z}_{\geq 0}$, just becomes right multiplication by the class of the identity of $\textup{End}_{\mathcal{S}}(1)$ on the $n$th graded piece of the algebra.

In our future work \cite{ANS} we use the link pattern tower to construct a tower of solutions to quantum Knizhnik-Zamolodchikov (qKZ) equations. The tower consists of $V_n$-valued solutions of qKZ equations ($n\in\mathbb{Z}_{\geq 0}$) which are compatible with respect to the connecting maps $\phi_n$. At the stochastic/combinatorial value of the extended affine Temperley-Lieb parameter, the $V_n$-valued solution in the tower reduces to the ground state of the dense loop model on the half-infinite cylinder with perimeter $n$. In that case the tower structure gives explicit recursion relations of the ground states with respect to the system size, leading to a refinement of the results in \cite{FZJZ}.

The structure of the paper is as follows.
In Sections \ref{section2} and \ref{section3} we define the category of tangle diagrams, $\mathcal{T}$ and the skein category of the annulus, $\mathcal{S}$, respectively. We explain in Section \ref{section3} that $\mathcal{S}$ is a monoidal category, with the tensor product obtained from the relative Kauffman bracket skein product from \cite{PrzSik}. 
In Section \ref{section4} we introduce Graham's and Lehrer's \cite{GL} affine Temperley-Lieb category $\mathcal{TL}$, whose morphisms are defined in terms of affine diagrams, and we show that the affine
Temperley-Lieb category is equivalent to $\mathcal{S}$.
In Section \ref{section5} we define the extended affine Temperley-Lieb algebra, $\textup{TL}_n$, algebraically and we recall Green's \cite{Gre} result that $\textup{TL}_n$ is equivalent to the endomorphism algebra $\textup{End}_{\mathcal{TL}}(n)$ of $\mathcal{TL}$. Combined with the result from Section \ref{section4} it leads to three different realizations of the extended affine Temperley-Lieb algebra (skein theoretic, combinatorial and algebraic).
In Section \ref{section6} we define the arc insertion functor $\mathcal{I}: \mathcal{S} \rightarrow \mathcal{S}$. 
In Section \ref{section7} we introduce the notion of towers of extended affine Temperley-Lieb algebra modules. In Section \ref{section8} we construct the link pattern tower and we explain how it gives rise to a relative version of the Roger-Yang \cite{RY} skein algebra on the punctured disc. We show in Section \ref{section9} how the link pattern tower is related to fusion. Finally 
in the appendix we discuss how the resulting tower of extended affine Temperley-Lieb algebras lifts to extended affine braid groups and extended affine Hecke algebras, and we discuss a $B$-type  presentation of the extended affine Temperley-Lieb algebra.

\subsection{Acknowledgments}
The authors would like to thank Bernard Nienhuis as motivation for this paper came from the joint work \cite{ANS}. 
We thank the two referees of the paper for their comments which led to substantial improvements of the paper. 
The work by Kayed Al Qasimi is supported by the Ministry of Education of the United Arab Emirates.
Diagrams were coded using PSTricks.

%%%%%%%%%%%%%%%%%%%%%%%%%%%%%%%%%%%
\section{The category of tangle diagrams}\label{section2}
%%%%%%%%%%%%%%%%%%%%%%%%%%%%%%%%%%%
Consider the three-manifold $\Sigma:=A\times [0,1]$ with $A$ the annulus 
\[
A:=\{z\in\mathbb{C} \,\, | \,\, 1\leq |z|\leq 2\}
\]
in the complex plane. We think of $\Sigma$ as a thickened cylinder in $\mathbb{R}^3$,
$$\Sigma = \psset{unit=0.5cm}\begin{pspicture}[shift=-4](8,10)
	\psframe*[linecolor=diskin](1,2)(7,8)
	\psellipse*[linecolor=diskin](4,8)(3,1)
	\psellipse*[linecolor=diskin](4,2)(3,1)
	\psellipse*[linecolor=gray](4,8)(1.5,0.5)
	\psellipse[linewidth=1.2pt](4,8)(1.5,0.5)
	\psline[linewidth = 1.2pt](1,2)(1,8)
	\pssavepath[linewidth=1.2pt]{top}{\psellipse[linewidth=1.2pt](4,8)(3.025,1)}
	\pssavepath[linewidth=1.2pt]{bottom}{\psellipticarc[linewidth=1.2pt](4,2)(3.025,1){180}{0}}
	\psellipticarc[linestyle=dotted,linewidth=0.6pt](4,2)(3,1){0}{180}
	\psline[linewidth=1.2pt](7,2)(7,8)
	\psline[linewidth=0.6pt,linestyle=dotted](2.5,2)(2.5,8)
	\psline[linewidth=0.6pt,linestyle=dotted](5.5,2)(5.5,8)
	\psellipse[linewidth=0.6pt,linestyle=dotted](4,2)(1.5,0.5)
	\end{pspicture}
	$$

Write $\partial A=C_i\cup C_o$ for the boundary of $A$ with $C_i:=S^1=
\{z\in\mathbb{C} \,\, | \,\, |z|=1\}$ and $C_o=\{z\in\mathbb{C} \,\, | \,\, |z|=2\}$ (the indices ``$i$" and ``$o$" stand for inner and outer, respectively). Give $A$ the counterclockwise orientation. 

Set $\zeta_n:=\exp(2\pi i/n)$ for $n\in\mathbb{Z}_{>0}$.  
Let $m,n\in\mathbb{Z}_{\geq 0}$ with $m+n$ even. A (framed) $(m,n)$-tangle $T$ in $\Sigma$ 
is a disjoint union of smooth framed loops and $\frac{1}{2}(m+n)$ framed arcs in $\Sigma$
satisfying:
\begin{enumerate}
\item[{\bf a.}] The loops are in the interior of $\Sigma$.
\item[{\bf b.}]  The $m+n$ marked points $(2\xi_m^{j-1},1)$ ($1\leq j\leq m$) and 
$(2\xi_n^{i-1},0)$ ($1\leq i\leq n$), framed along $\partial A\times\{1\}$ and $\partial A\times \{0\}$ with the orientation induced from $A$, are the endpoints of the framed arcs.
\end{enumerate}
Let $\textup{proj}: \Sigma\rightarrow A$ be the map obtained by projecting radially on the outer wall $C_o\times [0,1]$ of $\Sigma$ and identifying $C_o\times [0,1]\simeq A$ by collapsing the wall $C_o\times[0,1]$ inwards onto the floor $A\times\{0\}$ of $\Sigma$.
The projection $D:=\textup{proj}(T)$ of an $(m,n)$-tangle $T$ in general position with respect to $\textup{proj}$, together with the crossing data at the crossing points in the diagram, is called an $(m,n)$-tangle diagram in $A$.

If we draw a picture of an $(m,n)$-tangle diagram then we label the inner points $\xi_m^{i-1}$ on the diagram by $i$ ($i=1,\ldots,m$) and the outer points $2\xi_n^{j-1}$ by $j$ ($j=1,\ldots,n$).
An example of a tangle diagram is given in Figure \ref{example}.
\begin{figure}[htbp]
\begin{center}
\begin{pspicture}(-1.8,-1.8)(1.8,1.8)
    	%\psgrid
    	\SpecialCoor
    	\pscircle[fillstyle=solid, fillcolor=diskin,linewidth=1.2pt](0,0){1.5}
    	\pscircle[fillstyle=solid, fillcolor=white, linewidth=1.2pt](0,0){0.5}
    	\degrees[15]
     	\rput{0}(1.7;0){\tiny $1$}
    	\rput{0}(1.7;3){\tiny $2$}
    	\rput{0}(1.7;6){\tiny $3$}
	\rput{0}(1.7;9){\tiny $4$}
	\rput{0}(1.7;12){\tiny $5$}
	\rput{0}(0.25;0){\tiny $1$}
    	\rput{0}(0.25;5){\tiny $2$}
    	\rput{0}(0.25;10){\tiny $3$}
	\pscurve[linecolor=line,linewidth=1.2pt](1.5;3)(1.4;3)(1.1;0)(1.4;12)(1.5;12)
	\pscurve[linecolor=line,linewidth=1.2pt](0.5;0)(0.6;0)(0.7;2.5)(0.6;5)(0.5;5)
	\pscurve[linecolor=line,linewidth=1.2pt](1.5;6)(1.4;6)(1;7.5)(1.4;9)(1.5;9)
	\pscircle[linewidth=1.2pt,linecolor=line](1;5){0.25}
	\psdot[linecolor=diskin,dotsize=0.3](1.1;0)
	\pscurve[linecolor=line,linewidth=1.2pt](1.5;0)(1.4;0)(1;0)(0.8;12)(0.6;10)(0.5;10)
   \end{pspicture} 
\caption{An example of a (3,5)-tangle diagram in $A$.}
\label{example}
\end{center}
\end{figure}
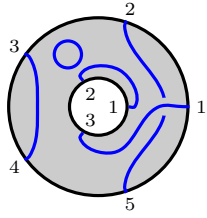

We say that two $(m,n)$-tangle diagrams $D$ and $D^\prime$ in $A$ are equivalent if we can transform $D$ to $D'$ by a planar isotopy of the annulus that fixes the boundary.
That is, there exists a
smooth ambient isotopy $h: A\times [0,1]\rightarrow A$ fixing $\partial A$ pointwise, satisfying
$h(D,1)=D^\prime$ and respecting the crossing data.  If $D$ is an $(m,n)$-tangle diagram we write $\overline{D}\in\textup{Hom}_{\mathcal{T}}(m,n)$ for its equivalence class.

%%%%%%%%%%%%%%%%%%%%%%%%%%%%%%
\begin{definition}\label{tanglecat}
The category $\mathcal{T}$ of tangle diagrams
in $A$ is the category with objects $\mathbb{Z}_{\geq 0}$ and
morphisms $\textup{Hom}_{\mathcal{T}}(m,n)$ the equivalence classes of $(m,n)$-tangle diagrams in $A$ if $m+n$ is even, and the empty set if $m+n$ is
odd. 
The composition map
\[
\textup{Hom}_{\mathcal{T}}(k,m)\times
\textup{Hom}_{\mathcal{T}}(m,n)\rightarrow
\textup{Hom}_{\mathcal{T}}(k,n),\qquad (\overline{D},\overline{D^\prime})\mapsto 
\overline{D^\prime}\circ\overline{D}
\]
is defined as follows: $\overline{D^\prime}\circ \overline{D}:=\overline{D^\prime\circ D}$ with
$D^\prime\circ D$ the $(k,n)$-tangle diagram obtained 
by rescaling $D$ to $\{z\in\mathbb{C}\,\, | \,\, 1\leq |z|\leq \frac{3}{2}\}$, rescaling $D^\prime$
to $\{z\in\mathbb{C}\,\, | \,\, \frac{3}{2}\leq |z|\leq 2\}$ and placing $D$ inside $D^\prime$.
The identity morphism $\textup{Id}_n\in\textup{End}_{\mathcal{T}}(n)$ is the equivalence class of the tangle diagram with straight line arcs from $\xi_n^{j-1}$ to $2\xi_n^{j-1}$ for $j=1,\ldots,n$ and no loops (it is the empty diagram for $n=0$). 
\end{definition}
An example of the composition of two tangle diagrams is given in \eqref{compositionexample}.
\begin{equation}\label{compositionexample} 
\begin{pspicture}[shift=-1.8](-1.8,-1.8)(1.8,1.8)
      	\SpecialCoor
    	\pscircle[fillstyle=solid, fillcolor=diskin,linewidth=1.2pt](0,0){1.5}
    	\pscircle[fillstyle=solid, fillcolor=white, linewidth=1.2pt](0,0){0.5}
    	\degrees[8]
     	\rput{0}(1.7;0){\tiny $1$}
    	\rput{0}(1.7;4){\tiny $2$}
	\rput{0}(0.25;0){\tiny $1$}
    	\rput{0}(0.25;4){\tiny $2$}
	\pscurve[linecolor=line,linewidth=1.2pt](1.5;0)(1.4;0)(1.2;1)(1.25;2)(1.2;3)(1.4;4)(1.5;4)
	\pscurve[linecolor=line,linewidth=1.2pt](0.5;0)(0.6;0)(0.7;2)(0.6;4)(0.5;4)
   \end{pspicture}
   \circ
        \begin{pspicture}[shift=-1.8](-1.8,-1.8)(1.8,1.8)
    	\SpecialCoor
    	\pscircle[fillstyle=solid, fillcolor=diskin,linewidth=1.2pt](0,0){1.5}
    	\pscircle[fillstyle=solid, fillcolor=white, linewidth=1.2pt](0,0){0.5}
    	\degrees[8]
     	\rput{0}(1.7;0){\tiny $1$}
    	\rput{0}(1.7;4){\tiny $2$}
	\rput{0}(0.26;0){\tiny $1$}
    	\rput{0}(0.26;2){\tiny $2$}
    	\rput{0}(0.26;4){\tiny $3$}
    	\rput{0}(0.26;6){\tiny $4$}
	\pscurve[linecolor=line,linewidth=1.2pt](1.5;0)(1.4;0)(1.1;6)(1.4;4)(1.5;4)
	\pscurve[linecolor=line,linewidth=1.2pt](0.5;0)(0.6;0)(0.7;1)(0.6;2)(0.5;2)
	\pscurve[linecolor=line,linewidth=1.2pt](0.5;4)(0.6;4)(0.7;5)(0.6;6)(0.5;6)
   \end{pspicture}   
    =
    \begin{pspicture}[shift=-1.8](-1.8,-1.8)(1.8,1.8)
    	\SpecialCoor
    	\pscircle[fillstyle=solid, fillcolor=diskin,linewidth=1.2pt](0,0){1.5}
    	\pscircle[fillstyle=solid, fillcolor=white, linewidth=1.2pt](0,0){0.5}
    	\degrees[8]
     	\rput{0}(1.7;0){\tiny $1$}
    	\rput{0}(1.7;4){\tiny $2$}
	\rput{0}(0.26;0){\tiny $1$}
    	\rput{0}(0.26;2){\tiny $2$}
    	\rput{0}(0.26;4){\tiny $3$}
    	\rput{0}(0.26;6){\tiny $4$}
	\pscurve[linecolor=line,linewidth=1.2pt](1.5;0)(1.4;0)(1.2;1)(1.25;2)(1.2;3)(1.4;4)(1.5;4)
	\pscurve[linecolor=line,linewidth=1.2pt](0.5;0)(0.6;0)(0.7;1)(0.6;2)(0.5;2)
	\pscurve[linecolor=line,linewidth=1.2pt](0.5;4)(0.6;4)(0.7;5)(0.6;6)(0.5;6)
	\pscircle[linecolor=line,linewidth=1.2pt](0,0){1}
   \end{pspicture}   
\end{equation}

%%%%%%%%%%%%%%%%%%%%%%%%%%%%%%%

%%%%%%%%%%%%%%%%%%%%%%%%%%%%%%%%%%
\section{The skein category of the annulus}\label{section3}
%%%%%%%%%%%%%%%%%%%%%%%%%%%%%%%%%%%
It is well known that skein modules on the strip $\mathbb{R}\times [0,1]$ form the morphisms
of a strict monoidal, linear category called the skein category, see, e.g., \cite[Chpt. XII]{Tur2}. In this section we extend this result to skein modules on the annulus.

Write $\mathbb{C}[\textup{Hom}_{\mathcal{T}}(m,n)]$ for the complex vector space with
linear basis the equivalence classes of $(m,n)$-tangle diagrams in $A$. We take it to be 
$\{0\}$ if $m+n$ is odd. Extend the category
$\mathcal{T}$ of tangle diagrams in $A$ to a linear category $\textup{Lin}(\mathcal{T})$
with objects $\mathbb{Z}_{\geq 0}$, morphisms
$\textup{Hom}_{\textup{Lin}(\mathcal{T})}(m,n):=\mathbb{C}[\textup{Hom}_{\mathcal{T}}(m,n)]$, and composition map the complex bilinear extension of the composition map of $\mathcal{T}$.
The skein category on the annulus is now defined as the quotient category
obtained from $\textup{Lin}(\mathcal{T})$ 
by modding out the {\it Kauffman skein relations} \cite{Kau,Kau2}:

%%%%%%%%%%%%%%%%%%%%%%%%%%%%%%%
\begin{definition}\label{congruence}
%\begin{lemma}\label{congruence}
Let $t^{\frac{1}{4}}$ be a nonzero complex number. The skein category $\mathcal{S}=\mathcal{S}(t^{\frac{1}{4}})$ of the annulus $A$ is the quotient of 
$\textup{Lin}(\mathcal{T})$ by the equivalence relation obtained by taking the linear and transitive closure of the following local relations on tangle diagrams:
\begin{enumerate}
\item[{\bf a.}] The \emph{Kauffman skein relation} $\overline{D}\sim t^{\frac{1}{4}}\overline{D^\prime}+
t^{-\frac{1}{4}}\overline{D^{\prime\prime}}$ with $D,D^\prime,D^{\prime\prime}$ three tangle diagrams that are identical except in a small open disc in $A$ where they are as shown
	\begin{align*}
		& \begin{pspicture}[shift=-1.9](-1,-2)(1,1)
			\pscircle[fillstyle=solid,fillcolor=diskin,linewidth=0.8pt,linestyle=dotted](0,0){1}
			\degrees[8]
			\psline[linecolor= line,linewidth=1.2pt](1;1)(1;5)
			\psline[linecolor= blue,linewidth=1.2pt](1;3)(0.2;3)
			\psline[linecolor= blue,linewidth=1.2pt](1;7)(0.2;7)	
			\rput(0,-1.5){$D$}
		\end{pspicture}\; ,\qquad
		 \begin{pspicture}[shift=-1.9](-1,-2)(1,1)
			\pscircle[fillstyle=solid,fillcolor=diskin,linewidth=0.8pt,linestyle=dotted ](0,0){1}
			\degrees[8]
			\pscurve[linecolor= line,linewidth=1.2pt](1;3)(0.9;3)(0.4;2)(0.9;1)(1;1)
			\pscurve[linecolor= line,linewidth=1.2pt](1;5)(0.9;5)(0.4;6)(0.9;7)(1;7)
			\rput(0,-1.5){$D'$}
		\end{pspicture}\; ,\qquad  
		 \begin{pspicture}[shift=-1.9](-1,-2)(1,1)
			\pscircle[fillstyle=solid,fillcolor=diskin,linewidth=0.8pt,linestyle=dotted](0,0){1}
			\degrees[8]
			\pscurve[linecolor= line,linewidth=1.2pt](1;5)(0.9;5)(0.4;4)(0.9;3)(1;3)
			\pscurve[linecolor= line,linewidth=1.2pt](1;1)(0.9;1)(0.4;0)(0.9;7)(1;7)
			\rput(0,-1.5){$D''$}
		\end{pspicture} \;; 
	\end{align*}

\item[{\bf b.}] The \emph{loop removal relation} $\overline{D}\sim -(t^{\frac{1}{2}}+t^{-\frac{1}{2}})
\overline{D^\prime}$
with $D,D^\prime$ two tangle diagrams that are identical 
except in a small open disc in $A$ where they are as shown
		 \begin{align*}
		&\begin{pspicture}[shift=-1.9](-1,-2)(1,1)
			\pscircle[fillstyle=solid,fillcolor=diskin,linewidth=0.8pt,linestyle=dotted](0,0){1}
			\pscircle[linecolor= line,linewidth=1.2pt](0;0){0.5} 
			\rput(0,-1.5){$D$}
		\end{pspicture}\; ,\qquad
		\begin{pspicture}[shift=-1.9](-1,-2)(1,1)
			\pscircle[fillstyle=solid,fillcolor=diskin,linewidth=0.8pt,linestyle=dotted](0,0){1}
			\rput(0,-1.5){$D'$}
		\end{pspicture}\;.
		\end{align*}

\end{enumerate}
\end{definition}
%%%%%%%%%%%%%%%%%%%%%%%%%%%%%%%%
%%%%%%%%%%%%%%%%%%%%%%%%%%%%%%%%

Note that if $m+n$ is odd then $\textup{Hom}_{\mathcal{S}}(m,n)=\{0\}$.
If $D$ is a tangle diagram in $A$ then we will write 
$[D]$ for the corresponding element in $\textup{Hom}_{\mathcal{S}}(m,n)$. 
We write $\mathbf{1}_n=[\textup{Id}_n]\in\textup{End}_{\mathcal{S}}(n)$
for the identity morphism ($n\in\mathbb{Z}_{\geq 0}$).

As is customary in skein theory, we write the Kauffman skein relation 
in $\textup{Hom}_{\mathcal{S}}(m,n)$ as
	\begin{align}
		&\begin{pspicture}[shift=-0.9](-1,-1)(1,1)
			\pscircle[fillstyle=solid,fillcolor=diskin,linewidth=0.8pt,linestyle=dotted](0,0){1}
			\degrees[8]
			\psline[linecolor= line,linewidth=1.2pt](1;1)(1;5)
			\psline[linecolor= blue,linewidth=1.2pt](1;3)(0.2;3)
			\psline[linecolor= blue,linewidth=1.2pt](1;7)(0.2;7)	
		\end{pspicture}
 = t^{\frac{1}{4}}\; 
		\begin{pspicture}[shift=-0.9](-1,-1)(1,1)
			\pscircle[fillstyle=solid,fillcolor=diskin,linewidth=0.8pt,linestyle=dotted ](0,0){1}
			\degrees[8]
			\pscurve[linecolor= line,linewidth=1.2pt](1;3)(0.9;3)(0.4;2)(0.9;1)(1;1)
			\pscurve[linecolor= line,linewidth=1.2pt](1;5)(0.9;5)(0.4;6)(0.9;7)(1;7)
		\end{pspicture}
		+ t^{-\frac{1}{4}} \;
		\begin{pspicture}[shift=-0.9](-1,-1)(1,1)
			\pscircle[fillstyle=solid,fillcolor=diskin,linewidth=0.8pt,linestyle=dotted](0,0){1}
			\degrees[8]
			\pscurve[linecolor= line,linewidth=1.2pt](1;5)(0.9;5)(0.4;4)(0.9;3)(1;3)
			\pscurve[linecolor= line,linewidth=1.2pt](1;1)(0.9;1)(0.4;0)(0.9;7)(1;7)
		\end{pspicture} \; \label{kauffman}
	\end{align}
and the loop removal relation in the skein module $\textup{Hom}_{\mathcal{S}}(m,n)$ 
as
		\begin{align}
		&\begin{pspicture}[shift=-0.9](-1,-1)(1,1)
			\pscircle[fillstyle=solid,fillcolor=diskin,linewidth=0.8pt,linestyle=dotted](0,0){1}
			\pscircle[linecolor= line,linewidth=1.2pt](0;0){0.5} 
		\end{pspicture}
	=	-(t^{\frac{1}{2}} +t^{-\frac{1}{2}}) \;
		\begin{pspicture}[shift=-0.9](-1,-1)(1,1)
			\pscircle[fillstyle=solid,fillcolor=diskin,linewidth=0.8pt,linestyle=dotted](0,0){1}
		\end{pspicture}\;,\label{loopremoval}
		\end{align} 
\noindent
with the disc showing the local neighbourhood in $A$ where the tangle diagrams differ.
We will also write down identities in skein modules by depicting both sides of the equation as linear combinations of the tangle diagrams $D$ representing $[D]$.

%%%%%%%%%%%%%%%%%%%%%%%%%%%%%%%%%%%%%
\begin{remark}
The important observation, due to Kauffman \cite{Kau,Kau2}, is that $[D]\in\textup{Hom}_{\mathcal{S}}(m,n)$ is invariant under the Reidemeister moves R1', R2 and R3 
(see Figure \ref{Reidemeister}) and their mirror versions, applied to the $(m,n)$-tangle diagram $D$ in $A$. Hence $[D]$ represents the ambient isotopy class of the associated framed $(m,n)$-tangle in
$\Sigma$.
\end{remark}
%%%%%%%%%%%%%%%%%%%%%%%%%%%%%%%%%%%%%%%
\begin{figure}[htbp]
\psset{unit=0.7cm}
	\begin{subfigure}[b]{0.4\textwidth}
		\centering
			\begin{pspicture}[shift=-0.9](-1,-1)(1,1)
			\pscircle[fillstyle=solid,fillcolor=diskin,linewidth=0.8pt,linestyle=dotted, ](0,0){1}
			\degrees[8]
			\psline[linecolor= line,linewidth=1.2pt](1;2)(1;6)
			\end{pspicture}
		$=$
			\begin{pspicture}[shift=-0.9](-1,-1)(1,1)
			\pscircle[fillstyle=solid,fillcolor=diskin,linewidth=0.8pt,linestyle=dotted, ](0,0){1}
			\degrees[8]
			\psecurve[linecolor= line,linewidth=1.2pt](0,1.1)(0,1)(0,0.5)(0.2,0.2)(0.5,0.4)(0.3,0.65)(-0.2,0.4)
			\psecurve[linecolor= line,linewidth=1.2pt](0.3,-0.65)(0.5,-0.4)(0.2,-0.2)(0,-0.5)(0,-1)(0,-1.1)
			\psdot[linecolor=diskin,dotsize=0.3](0,0.55)
			\psdot[linecolor=diskin,dotsize=0.3](0,-0.55)
			\psecurve[linecolor= line,linewidth=1.2pt](0.5,0.4)(0.3,0.65)(-0.2,0.4)(-0.3,0.1)(-0.3,0)(-0.3,-0.1)(-0.2,-0.4)(0.3,-0.65)(0.5,-0.4)(0.2,-0.2)
			\end{pspicture}
			\subcaption*{R1'}
	 \end{subfigure}
	   \begin{subfigure}[b]{0.4\textwidth}
	 \centering
	 \begin{pspicture}[shift=-0.9](-1,-1)(1,1)
			\pscircle[fillstyle=solid,fillcolor=diskin,linewidth=0.8pt,linestyle=dotted, ](0,0){1}
			\degrees[8]
			\psline[linecolor= line,linewidth=1.2pt](1;2.5)(1;5.5)
			\psline[linecolor= line,linewidth=1.2pt](1;1.5)(1;6.5)
			\end{pspicture}
			$=$
			\begin{pspicture}[shift=-0.9](-1,-1)(1,1)
			\pscircle[fillstyle=solid,fillcolor=diskin,linewidth=0.8pt,linestyle=dotted, ](0,0){1}
			\degrees[8]
			\pscurve[linecolor= line,linewidth=1.2pt](1;2.5)(0.7;2.5)(0.4;0)(0.7;5.5)(1;5.5)
			\psdot[linecolor=diskin,dotsize=0.3](0.45;2)
			\psdot[linecolor=diskin,dotsize=0.3](0.45;6)
			\pscurve[linecolor= line,linewidth=1.2pt](1;1.5)(0.7;1.5)(0.4;4)(0.7;6.5)(1;6.5)
			\end{pspicture}
			\subcaption*{R2}
	  \end{subfigure}
	 \begin{subfigure}[b]{0.4\textwidth}
	 \centering
	  \begin{pspicture}[shift=-0.9](-1,-1)(1,1)
			\pscircle[fillstyle=solid,fillcolor=diskin,linewidth=0.8pt,linestyle=dotted, ](0,0){1}
			\degrees[8]
			\psline[linecolor= line,linewidth=1.2pt](1;2.5)(1;6.5)
			\psdot[linecolor=diskin,dotsize=0.3](0;0)
			\psdot[linecolor=diskin,dotsize=0.3](0.5;6.5)
			\pscurve[linecolor= line,linewidth=1.2pt](1;2)(0.3;0)(1;6)
			\psdot[linecolor=diskin,dotsize=0.3](0.5;1.5)
			\psline[linecolor= line,linewidth=1.2pt](1;1.5)(1;5.5)
		\end{pspicture}
		$=$
		\begin{pspicture}[shift=-0.9](-1,-1)(1,1)
			\pscircle[fillstyle=solid,fillcolor=diskin,linewidth=0.8pt,linestyle=dotted, ](0,0){1}
			\degrees[8]
			\psline[linecolor= line,linewidth=1.2pt](1;2.5)(1;6.5)
			\psdot[linecolor=diskin,dotsize=0.3](0;0)
			\psdot[linecolor=diskin,dotsize=0.3](0.5;2.5)
			\pscurve[linecolor= line,linewidth=1.2pt](1;2)(0.3;4)(1;6)
			\psdot[linecolor=diskin,dotsize=0.3](0.5;5.5)
			\psline[linecolor= line,linewidth=1.2pt](1;1.5)(1;5.5)
		\end{pspicture}
		\subcaption*{R3}
			 \end{subfigure}
\caption{Reidemeister moves.}
\label{Reidemeister}
\end{figure}
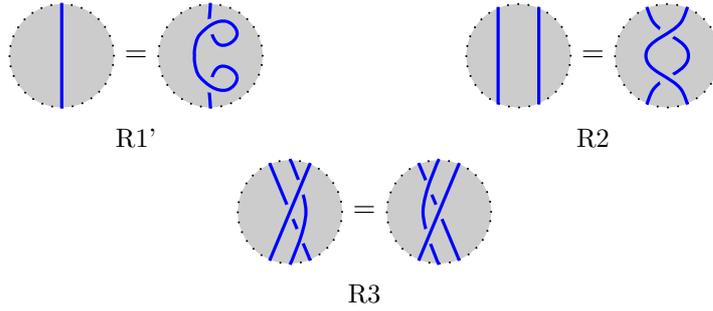
Note that the Reidemeister move R1 is only satisfied up to a scalar multiple,
\begin{equation*}
	\begin{pspicture}[shift=-0.9](-1,-1)(1,1)
			\pscircle[fillstyle=solid,fillcolor=diskin,linewidth=0.8pt,linestyle=dotted, ](0,0){1}
			\degrees[8]
			\psecurve[linecolor= line,linewidth=1.2pt](0,-1.1)(0,-1)(-0.15,-0.4)(0,0)(0.5,0.4)(0.7,0)(0.5,-0.4)
			\psdot[linecolor=diskin,dotsize=0.3](0,0)
			\psecurve[linecolor= line,linewidth=1.2pt](0,1.1)(0,1)(-0.15,0.4)(0,0)(0.5,-0.4)(0.7,0)(0.5,0.4)
			\end{pspicture}
			= -t^{\frac{3}{4}}\;
			 \begin{pspicture}[shift=-0.9](-1,-1)(1,1)
			\pscircle[fillstyle=solid,fillcolor=diskin,linewidth=0.8pt,linestyle=dotted, ](0,0){1}
			\degrees[8]
			\psline[linecolor= line,linewidth=1.2pt](1;2)(1;6)
		\end{pspicture}
		\hspace{1cm}
		\begin{pspicture}[shift=-0.9](-1,-1)(1,1)
			\pscircle[fillstyle=solid,fillcolor=diskin,linewidth=0.8pt,linestyle=dotted, ](0,0){1}
			\degrees[8]
			\psecurve[linecolor= line,linewidth=1.2pt](0,1.1)(0,1)(-0.15,0.4)(0,0)(0.5,-0.4)(0.7,0)(0.5,0.4)
			\psdot[linecolor=diskin,dotsize=0.3](0,0)
			\psecurve[linecolor= line,linewidth=1.2pt](0,-1.1)(0,-1)(-0.15,-0.4)(0,0)(0.5,0.4)(0.7,0)(0.5,-0.4)
			\end{pspicture}
			= -t^{-\frac{3}{4}} \;
			 \begin{pspicture}[shift=-0.9](-1,-1)(1,1)
			\pscircle[fillstyle=solid,fillcolor=diskin,linewidth=0.8pt,linestyle=dotted, ](0,0){1}
			\degrees[8]
			\psline[linecolor= line,linewidth=1.2pt](1;2)(1;6)
		\end{pspicture} 
\end{equation*}

\begin{remark}\label{RKBSM}
The morphism space $\textup{Hom}_{\mathcal{S}}(m,n)$ can be identified with a relative Kauffman bracket skein module on the thickened cylinder $\Sigma$ with (framed) marked points $(2\xi_m^{i-1},1)$ ($1\leq i\leq m$) and $(2\xi_n^{j-1},0)$ ($1\leq j\leq n$), cf. \cite{Prz}. The identification goes through the projection map $\textup{proj}$. In this $3$-dimensional description of the hom-spaces the composition rule turns into the vertically stacking of the thickened cylinders.
\end{remark}

We now show that the skein category $\mathcal{S}$ is a strict monoidal, linear category.\footnote{We thank an anonymous referee for this observation.} The tensor functor $\times_{\mathcal{S}}: \mathcal{S}\times\mathcal{S}\rightarrow\mathcal{S}$ on objects
$m,n\in\mathbb{Z}_{\geq 0}$ is given by $m\times_{\mathcal{S}} n:=m+n$. On morphisms the tensor product is defined through Przytycki's and Sikora's \cite[\S 3]{PrzSik} relative version of the skein algebra multiplication on the associated relative Kauffman bracket skein modules from Remark \ref{RKBSM}.
On the level of tangles $T, T^\prime$ on the thickened cylinder $\Sigma$, the Kauffman bracket skein product $T\cdot T^\prime$ amounts to placing $T^\prime$ inside the solid cylindrical hole of the thickened cylinder of $T$ and moving the endpoints 
of the arcs to the marked points on $C_o\times\{1\}$ and $C_o\times\{0\}$ in a specific way. The exact rule regarding the repositioning of the endpoints is determined as follows. 

Before putting $T^\prime$ inside $T$, fix the parametrizations $\gamma_\ell(s):=(2\exp(2\pi is),\ell)$ ($s\in [0,1]$) of $C_o\times\{\ell\}\subset A\times\{\ell\}$ ($\ell=0,1$). Place the endpoints of the two tangles $T$ and $T^\prime$ on the line segments $(1,2]\times \{\ell\}\subset A\times \{\ell\}$ ($\ell=0,1$)
using an isotopy of $\Sigma$ which, for $\ell\in\{0,1\}$, stabilizes $A\times \{\ell\}$, fixes the endpoint
$(2,\ell)\in
\partial\Sigma$ and pushes, for $\epsilon>0$ sufficiently small, the boundary arc $\gamma_\ell([0,1-\epsilon])$ into the line segment $(1,2]\times \{\ell\}$. The skein algebra multiplication rule then produces a new tangle with endpoints on the two line segments $(1,2]\times\{\ell\}$ ($\ell=0,1$), which is converted back to a tangle with endpoints on the marked points on $C_o\times\{\ell\}$ ($\ell=0,1$) by applying a reverse isotopy of the type as described above
(see \cite[\S 3]{PrzSik} for further details).

Through the projection map $\textup{proj}$ the relative skein algebra multiplication rule as described in the previous paragraph gives bilinear operations
\[
\textup{Hom}_{\mathcal{S}}(k,\ell)\times\textup{Hom}_{\mathcal{S}}(m,n)
\overset{\times_{\mathcal{S}}}{\longrightarrow}\textup{Hom}_{\mathcal{S}}(k+m,\ell+n),
\quad ([D], [D^\prime])\mapsto [D]\times_{\mathcal{S}} [D^\prime]
\]
for $k,\ell,m,n\in\mathbb{Z}_{\geq 0}$. They are explicitly described as follows.
Let $D$ be a $(k,\ell)$-tangle diagram on $A$ and $D^\prime$ an $(m,n)$-tangle diagram on $A$. 
Then $[D]\times_{\mathcal{S}} [D^\prime]=[D\ast D^\prime]$ with $D\ast D^\prime$ the following
$(k+m,\ell+n)$-tangle diagram. 

Let $D_{\curvearrowright}$ be a diagram on $A$ obtained from $D$ by applying 
a planar isotopy of $A$ which
\begin{enumerate}
\item[{\bf 1.}] rotates the endpoints $\xi_k^{i-1}\in C_i$ clockwise to $\xi_{k+m}^{i-1}$
($1\leq i\leq k$),
\item[{\bf 2.}] rotates the endpoints $2\xi_\ell^{i-1}\in C_o$ clockwise to 
$\xi_{\ell+n}^{i-1}$ ($1\leq i\leq \ell$),
\item[{\bf 3.}] fixes some straight line segment between the inner and outer boundary of $A$.
\end{enumerate}
Similarly, let $D^\prime_{\curvearrowleft}$ be the diagram on $A$ obtained 
from $D^\prime$ by applying a planar isotopy of $A$ which
\begin{enumerate}
\item[{\bf 1.}] rotates the endpoints $\xi_m^{i-1}\in C_i$ counterclockwise to 
$\xi_{k+m}^{k+i-1}$  ($1\leq i\leq m$),
\item[{\bf 2.}] rotates the endpoints $2\xi_n^{i-1}\in C_o$ counterclockwise to 
$2\xi_{\ell+n}^{n+i-1}$ ($1\leq i\leq n$),
\item[{\bf 3.}] fixes some straight line segment between the inner and outer boundary of $A$.
\end{enumerate} 
Then $D\ast D^\prime$ is the $(k+m,\ell+n)$-tangle diagram obtained by placing 
$D_{\curvearrowright}$ on top of $D^\prime_{\curvearrowleft}$.

In the following picture we give an example of the $\ast$-product of two tangle diagrams on
$A$. We use a different colour for the (2,2)-tangle diagram to assist comprehension.
$$
\begin{pspicture}[shift=-1.8](-1.8,-1.8)(1.8,1.8)
    	\SpecialCoor
    	\pscircle[fillstyle=solid, fillcolor=diskin,linewidth=1.2pt](0,0){1.5}
    	\pscircle[fillstyle=solid, fillcolor=white, linewidth=1.2pt](0,0){0.5}
    	\degrees[8]
     	\rput{0}(1.7;0){\tiny $1$}
    	\rput{0}(1.7;4){\tiny $2$}
	\rput{0}(0.25;0){\tiny $1$}
    	\rput{0}(0.25;4){\tiny $2$}
	\pscurve[linecolor=red,linewidth=1.2pt](1.5;0)(1.4;0)(1.2;1)(1.25;2)(1.2;3)(1.4;4)(1.5;4)
	\pscurve[linecolor=red,linewidth=1.2pt](0.5;0)(0.6;0)(0.8;6)(0.6;4)(0.5;4)
   \end{pspicture}
	\ast
\begin{pspicture}[shift=-1.8](-1.8,-1.8)(1.8,1.8)
    	\SpecialCoor
    	\pscircle[fillstyle=solid, fillcolor=diskin,linewidth=1.2pt](0,0){1.5}
    	\pscircle[fillstyle=solid, fillcolor=white, linewidth=1.2pt](0,0){0.5}
    	\degrees[3]
     	\rput{0}(1.7;0){\tiny $1$}
    	\rput{0}(1.7;1){\tiny $2$}
    	\rput{0}(1.7;2){\tiny $3$}
	\rput{0}(0.25;0){\tiny $1$}
	\pscurve[linecolor=line,linewidth=1.2pt](1.5;2)(1.25;2)(0.85;2.75)(0.75;0)(0.5;0)
	\pscurve[linecolor=line,linewidth=1.2pt](1.5;0)(1.25;0)(1;0.5)(1.25;1)(1.5;1)
   \end{pspicture} 
=
\begin{pspicture}[shift=-1.8](-1.8,-1.8)(1.8,1.8)
    	\SpecialCoor
    	\pscircle[fillstyle=solid, fillcolor=diskin,linewidth=1.2pt](0,0){1.5}
    	\pscircle[fillstyle=solid, fillcolor=white, linewidth=1.2pt](0,0){0.5}
    	\degrees[15]
     	\rput{0}(1.7;0){\tiny $1$}
    	\rput{0}(1.7;3){\tiny $2$}
	\rput{0}(1.7;6){\tiny $3$}
    	\rput{0}(1.7;9){\tiny $4$}
	\rput{0}(1.7;12){\tiny $5$}
	\rput{0}(0.25;0){\tiny $1$}
    	\rput{0}(0.25;5){\tiny $2$}
	\rput{0}(0.25;10){\tiny $3$}
	\pscurve[linecolor=line,linewidth=1.2pt](1.5;12)(1.25;12)(1;0)(0.85;3)(0.85;5)(0.65;8)(0.65;10)(0.5;10)
	\pscurve[linecolor=line,linewidth=1.2pt](1.5;6)(1.25;6)(1;7.5)(1.25;9)(1.5;9)
	\psdot[linecolor=diskin,dotsize=0.4](0.725;6.5)
	\pscurve[linecolor=red,linewidth=1.2pt](1.5;0)(1.4;0)(1.2;1.5)(1.4;3)(1.5;3)
	\pscurve[linecolor=red,linewidth=1.2pt](0.5;0)(0.6;0)(0.8;12.5)(0.9;10)(0.8;7.5)(0.6;5)(0.5;5)
   \end{pspicture}
$$

\begin{example}
The tensor product maps $\textup{End}_{\mathcal{S}}(0)\times\textup{Hom}_{\mathcal{S}}(m,n)
\rightarrow \textup{Hom}_{\mathcal{S}}(m,n)$ and $\textup{Hom}_{\mathcal{S}}(m,n)\times
\textup{End}_{\mathcal{S}}(0)\rightarrow \textup{Hom}_{\mathcal{S}}(m,n)$ correspond to 
placing knot diagrams on top or below tangle diagrams within $\textup{Hom}_{\mathcal{S}}(m,n)$. The resulting $\textup{End}_{\mathcal{S}}(0)$-bimodule structure on $\textup{Hom}_{\mathcal{S}}(m,n)$ has been described and studied in the more general context of relative Kauffman skein modules over surfaces, see, e.g., \cite{Prz, Prz2}. See also \cite[\S 4.1]{Le} for a discussion of $\textup{Hom}_{\mathcal{S}}(0,2)$ as $\textup{End}_{\mathcal{S}}(0)$-bimodule.
\end{example}

%%%%%%%%%%%%%%%%%%%%%%%%%%%%%%%%%%%%%%%%%
\begin{proposition}
The skein category $\mathcal{S}$ of the annulus is a strict monoidal linear category with tensor functor $\times_{\mathcal{S}}: \mathcal{S}\times\mathcal{S}\rightarrow\mathcal{S}$ as defined above, and 
unit object $0$.
\end{proposition}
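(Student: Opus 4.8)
The plan is to verify the axioms of a strict monoidal category in the order: (i) well-definedness of $\times_{\mathcal{S}}$ on morphisms, (ii) functoriality (the interchange law), (iii) strict associativity on objects and morphisms, (iv) the unit object $0$ acting strictly as identity. Since the action on objects is just addition on $\mathbb{Z}_{\geq 0}$, strict associativity and strict unitality on objects are immediate, and the content is entirely at the level of morphisms and their compatibility with composition.

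First I would address well-definedness. The operation $\ast$ on tangle diagrams is defined via the planar isotopies in items 1--3 of the construction; different choices of such isotopies (e.g.\ different straight line segments fixed, or different small rotations realizing the prescribed endpoint permutations) are themselves related by a planar isotopy of $A$ fixing $\partial A$, so $\overline{D\ast D'}$ is independent of the choices and the formula descends to $\textup{Hom}_{\mathcal{T}}$. To see that it further descends to $\textup{Hom}_{\mathcal{S}}$, observe that the Kauffman skein relation and the loop removal relation are \emph{local}: they are supported in a small open disc in $A$, and the isotopies involved in forming $D_{\curvearrowright}$ and $D^\prime_{\curvearrowleft}$ do not touch that disc (or can be chosen to avoid it). Hence applying a skein relation inside $D$ (resp.\ $D'$) corresponds, after $\ast$, to applying the same local relation inside $D\ast D'$, so $\ast$ is compatible with the congruences on both factors and induces $\times_{\mathcal{S}}$. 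Bilinearity is built in by definition.

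Next, functoriality: I must check $\textup{Id}_k\ast\textup{Id}_m=\textup{Id}_{k+m}$ (clear, since stacking the standard radial-arc diagrams and relabelling endpoints gives the standard radial-arc diagram for $k+m$) and the interchange law
\[
([D_1]\circ[D_2])\times_{\mathcal{S}}([D_1']\circ[D_2'])
=([D_1]\times_{\mathcal{S}}[D_1'])\circ([D_2]\times_{\mathcal{S}}[D_2']).
\]
The cleanest way is to use the $3$-dimensional description from Remark \ref{RKBSM}: composition is vertical stacking of thickened cylinders $\Sigma$, and $\times_{\mathcal{S}}$ is the Przytycki--Sikora relative skein product, which inserts one thickened cylinder into the cylindrical hole of the other with a prescribed repositioning of boundary endpoints. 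Both sides of the interchange law then describe the same stacked-and-nested configuration of tangles in $\Sigma$, up to an ambient isotopy of $\Sigma$ fixing the boundary marked points; one exhibits this isotopy explicitly (it moves the nested middle cylinder past the stacking interface), and invariance of the relative Kauffman bracket skein module under such isotopies finishes it. Associativity of $\times_{\mathcal{S}}$ on morphisms and strict unitality (insertion of the empty/identity diagram in $\textup{End}_{\mathcal{S}}(0)$, resp.\ of $\mathbf{1}_0$) are handled the same way: all the relevant repositioning isotopies of the nested cylinders compose to an ambient isotopy of $\Sigma$ fixing $\partial\Sigma$, so the induced maps agree on the nose, giving strictness rather than mere coherence.

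The main obstacle is the bookkeeping of endpoints: one must check that the clockwise/counterclockwise rotations in the definition of $D_{\curvearrowright}$ and $D^\prime_{\curvearrowleft}$ are mutually consistent so that $\ast$ is genuinely associative \emph{strictly}, not just up to the rotation that cyclically reshuffles boundary points. Concretely, when iterating $\ast$ threefold one gets two a priori different prescriptions for where the $\sum n_j$ outer endpoints land, and one must verify these prescriptions agree as permutations and that the two resulting diagrams differ by a planar isotopy fixing $\partial A$ pointwise. This is routine once set up carefully --- it is essentially the statement that the Przytycki--Sikora product on $\Sigma$ is strictly associative, which is already implicit in \cite{PrzSik} --- but it is the step that requires genuine care rather than a one-line appeal to locality.
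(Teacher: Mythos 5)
Your proposal is correct in outline and is considerably more systematic than what the paper actually does, but it takes a different route and places the emphasis differently. The paper's entire proof is one observation: it declares strict associativity and unitality to follow from the construction, isolates compatibility of $\times_{\mathcal{S}}$ with composition as the \emph{only} nontrivial check, and resolves it by noting that all endpoints of $D$ in $D\ast D^\prime$ are rotated clockwise and that over-rotation by angles $\geq 2\pi$ cannot occur because the planar isotopy producing $D_{\curvearrowright}$ fixes a straight line segment between the two boundary components (property 3 of the definition). That is the whole content: the danger in comparing ``compose then rotate'' with ``rotate then compose'' is that the two composite isotopies could differ by a Dehn twist of $A$, which acts nontrivially on skein classes, and pinning a radial segment rules this out. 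Your proposal, by contrast, routes the interchange law through the three-dimensional Przytycki--Sikora picture (``exhibit the isotopy of $\Sigma$ explicitly'') and identifies strict associativity of $\ast$ as the main obstacle. Both are legitimate, and your treatment of well-definedness and unitality is a genuine addition the paper omits; but the step you dismiss as ``routine once set up carefully'' and the step where you say ``one exhibits this isotopy explicitly'' are exactly where the paper's single nontrivial argument lives. To make your proof complete you should name the obstruction explicitly: the a priori ambiguity is a relative Dehn twist, and it is excluded precisely because each repositioning isotopy fixes a radial segment and hence rotates every endpoint by an angle strictly less than $2\pi$ in a fixed direction. With that sentence added, your argument subsumes the paper's.
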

%%%%%%%%%%%%%%%%%%%%%%%%%%%%%%%%%%%%%%%%%
\begin{proof}
By the remarks preceding the proposition, the only nontrivial check is the compatibility of $\times_{\mathcal{S}}$
with composition of morphisms. For the first tensor component this follows from the fact that
all the endpoints of $D$ in 
$D\ast D^\prime$ are rotated clockwise, while over-rotation by angles $\geq 2\pi$ cannot occur due to the third property of the planar isotopy transforming $D$ into
$D_{\curvearrowright}$. A similar remark applies for the second tensor component.
\end{proof}
%%%%%%%%%%%%%%%%%%%%%%%%%%%%%%%%%%%%%%%%%

We write $\otimes$ for the usual tensor product of complex vector spaces.
%%%%%%%%%%%%%%%%%%%%%%%%%%%%%%%%%%%%%%%%%%
\begin{corollary}\label{algemaps}
For $m,n\in\mathbb{Z}_{\geq 0}$ we have algebra morphisms
\[
\epsilon_{m,n}: \textup{End}_{\mathcal{S}}(m)\otimes\textup{End}_{\mathcal{S}}(n)
\rightarrow \textup{End}_{\mathcal{S}}(m+n)
\]
defined by $\epsilon_{m,n}\bigl([D]\otimes [D^\prime]\bigr):=[D]\times_{\mathcal{S}}[D^\prime]=
[D\ast D^\prime]$.
\end{corollary}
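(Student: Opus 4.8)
The plan is to deduce Corollary \ref{algemaps} directly from the monoidal structure on $\mathcal{S}$ established in the preceding proposition, together with the standard fact that in any strict monoidal linear category the tensor product of two endomorphism algebras maps to the endomorphism algebra of the tensor of the objects. First I would note that, since $m\times_{\mathcal{S}}n=m+n$, applying the tensor functor $\times_{\mathcal{S}}$ to a pair of endomorphisms $f\in\textup{End}_{\mathcal{S}}(m)$, $g\in\textup{End}_{\mathcal{S}}(n)$ yields a morphism $f\times_{\mathcal{S}}g\in\textup{End}_{\mathcal{S}}(m+n)$; bilinearity of $\times_{\mathcal{S}}$ on morphisms (which is built into the construction of the tensor functor on the linear category $\mathcal{S}$) means this descends to a well-defined linear map $\epsilon_{m,n}$ out of the vector space tensor product $\textup{End}_{\mathcal{S}}(m)\otimes\textup{End}_{\mathcal{S}}(n)$, given on simple tensors by $[D]\otimes[D']\mapsto[D]\times_{\mathcal{S}}[D']=[D\ast D']$.

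Next I would check the two algebra-morphism axioms. Unitality is immediate: the unit of $\textup{End}_{\mathcal{S}}(m)\otimes\textup{End}_{\mathcal{S}}(n)$ is $\mathbf{1}_m\otimes\mathbf{1}_n$, and since $\times_{\mathcal{S}}$ is a strict monoidal functor we have $\mathbf{1}_m\times_{\mathcal{S}}\mathbf{1}_n=\textup{Id}_{\mathcal{S}}(m\times_{\mathcal{S}}n)=\mathbf{1}_{m+n}$ (concretely, $\textup{Id}_m\ast\textup{Id}_n$ is the straight-line diagram $\textup{Id}_{m+n}$ after the clockwise/counterclockwise rotations prescribed in the definition of $\ast$). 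Multiplicativity is the interchange law: for $f_1,f_2\in\textup{End}_{\mathcal{S}}(m)$ and $g_1,g_2\in\textup{End}_{\mathcal{S}}(n)$ one needs
\[
(f_1\times_{\mathcal{S}}g_1)\circ(f_2\times_{\mathcal{S}}g_2)=(f_1\circ f_2)\times_{\mathcal{S}}(g_1\circ g_2),
\]
which is exactly the statement that $\times_{\mathcal{S}}: \mathcal{S}\times\mathcal{S}\to\mathcal{S}$ is a functor (it respects composition in the product category), applied to the composable pair $\bigl((f_1,g_1),(f_2,g_2)\bigr)$ in $\mathcal{S}\times\mathcal{S}$. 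Since the product in $\textup{End}_{\mathcal{S}}(m)\otimes\textup{End}_{\mathcal{S}}(n)$ is $(f_1\otimes g_1)(f_2\otimes g_2)=(f_1\circ f_2)\otimes(g_1\circ g_2)$, this gives $\epsilon_{m,n}\bigl((f_1\otimes g_1)(f_2\otimes g_2)\bigr)=\epsilon_{m,n}(f_1\otimes g_1)\,\epsilon_{m,n}(f_2\otimes g_2)$, as required.

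There is essentially no obstacle here: the content has already been discharged in the proposition, whose proof established that $\times_{\mathcal{S}}$ is compatible with composition (the nontrivial point being that the third condition on the planar isotopies rules out over-rotation, so the $\ast$-product is genuinely functorial rather than merely associative up to isotopy). The only thing worth a sentence is making explicit that the diagrammatic description $[D]\times_{\mathcal{S}}[D']=[D\ast D']$ matches the abstract $\epsilon_{m,n}$, i.e. that the $\ast$-product of two $(n,n)$- and $(m,m)$-diagrams lands in $\textup{End}_{\mathcal{S}}(m+n)$, which is clear from the endpoint bookkeeping in the definition of $\ast$ (all inner endpoints become the $(m+n)$-th roots of unity on $C_i$, all outer endpoints the scaled $(m+n)$-th roots of unity on $C_o$). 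Hence $\epsilon_{m,n}$ is a well-defined algebra morphism.
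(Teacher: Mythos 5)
Your proposal is correct and is exactly the argument the paper intends: the corollary is stated without a separate proof precisely because it follows from the preceding proposition, whose only nontrivial content is the compatibility of $\times_{\mathcal{S}}$ with composition (the interchange law), which is the multiplicativity of $\epsilon_{m,n}$; unitality and bilinearity are built into the strict monoidal linear structure just as you say. Nothing is missing.
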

%%%%%%%%%%%%%%%%%%%%%%%%%%%%%%%%%%%%%%%%%%%
As we shall see in Remark \ref{isoRem}, the algebra $\textup{End}_{\mathcal{S}}(m)$ is isomorphic to the $m$th extended affine Temperley-Lieb algebra. Under this identification, the algebra maps $\epsilon_{m,n}$ were considered before in \cite[\S 3.3]{GS}.

%%%%%%%%%%%%%%%%%%%%%%%%%%%%%%%%%%%%%%
\section{Equivalence with the affine Temperley-Lieb category}\label{section4}
%%%%%%%%%%%%%%%%%%%%%%%%%%%%%%%%%%%%%%%

The affine Temperley-Lieb category was introduced by Graham and Lehrer \cite{GL}. 
In this category the morphisms are affine diagrams, which are defined as follows.

%%%%%%%%%%%%%%%%%%%%%%%%%%%%%%%%%%%%%
\begin{definition}
Let $m,n\in\mathbb{Z}_{\geq 0}$. An affine $(m,n)$-diagram is an $(m,n)$-tangle diagram 
in $A$ with no crossings and without contractible loops in $A$. 
We write $\mathcal{D}_{m,n}$ for the subclass of $\textup{Hom}_{\mathcal{T}}(m,n)$
consisting of equivalence classes $\overline{D}$ of affine $(m,n)$-diagrams $D$.
 \end{definition}
%%%%%%%%%%%%%%%%%%%%%%%%%%%%%%%%%%%%%
\begin{remark}
An affine diagram on the annulus can be viewed as a periodic diagram on the infinite horizontal strip
by cutting the annulus open along a line segment connecting the inner and outer boundary of $A$ and extending the resulting diagram periodically.
This is how affine diagrams were originally considered in \cite{GL, Gre}.
\end{remark}

Let $\textup{Lin}_c(\mathcal{T})$ be the quotient of the linear category $\textup{Lin}(\mathcal{T})$ by
the loop removal relation \eqref{loopremoval} (compare with Definition \ref{congruence}). The sublabel ``$c$" stands for contractible, signifying that in $\textup{Lin}_c(\mathcal{T})$ contractible loops in tangle diagrams may be removed by the multiplicative factor $-(t^{\frac{1}{2}}+t^{-\frac{1}{2}})$.
If $D$ is an $(m,n)$-tangle diagram then we write $\langle D\rangle$ for its equivalence class in 
$\textup{Hom}_{\textup{Lin}_c(\mathcal{T})}(m,n)$.

Note that the skein category $\mathcal{S}$ is the quotient of $\textup{Lin}_c(\mathcal{T})$ by
the Kauffman skein relation \eqref{kauffman}. Graham's and Lehrer's \cite{GL} affine Temperley-Lieb category, which is closely related to Jones' \cite{Jo} annular Temperley-Lieb category,
is the following subcategory of $\textup{Lin}_c(\mathcal{T})$.

%%%%%%%%%%%%%%%%%%%%%%%%%%%%%%%%%%%
\begin{definition}[\cite{GL}]
The affine Temperley-Lieb category $\mathcal{TL}=\mathcal{TL}(t^{\frac{1}{2}})$ is the linear subcategory of
$\textup{Lin}_c(\mathcal{T})$ with objects $\mathbb{Z}_{\geq 0}$ and morphisms
$\textup{Hom}_{\mathcal{TL}}(m,n)$
the subspace of
$\textup{Hom}_{\textup{Lin}_c(\mathcal{T})}(m,n)$
spanned by the equivalence classes $\langle D\rangle$ of affine $(m,n)$-diagrams $D$.
\end{definition}
%%%%%%%%%%%%%%%%%%%%%%%%%%%%%%%%%%%

If $D$ is an affine $(m,n)$-diagram and $D^\prime$ is an affine $(k,m)$-diagram then
 \[
\langle D\rangle \circ \langle D^\prime\rangle=\bigl(-t^{\frac{1}{2}}-t^{-\frac{1}{2}}\bigr)^{l(D^{\prime\prime})}
\langle D^{\prime\prime}_c\rangle
 \]
 in $\textup{Hom}_{\mathcal{TL}}(k,n)$, with $D^{\prime\prime}$ the $(k,n)$-tangle diagram in $A$ obtained by inserting $D^\prime$ inside $D$ (in the same way as in Definition \ref{tanglecat}), with $l(D^{\prime\prime})$
 the number of loops in $D^{\prime\prime}$ contractible in $A$,
 and with $D^{\prime\prime}_c\in\mathcal{D}_{k,n}$ the affine $(k,n)$-diagram obtained from 
 $D^{\prime\prime}$ by removing the contractible loops.

Note that 
$\textup{Hom}_{\mathcal{TL}}(m,n)=\{0\}$ if $m+n$ is odd, and 
\[
\{ \langle D\rangle \,\, | \,\, D \,\,\,
\textup{affine } (m,n)\textup{-diagram}\}
\]
is a linear basis of $\textup{Hom}_{\mathcal{TL}}(m,n)$.

Next we show that the linear categories $\cS$ and
$\mathcal{TL}$ are equivalent. The subtle point is to show that the
obvious linear functor from $\mathcal{TL}$ to $\cS$ is faithful. The proof uses a relative version of the Kauffman bracket for $(m,n)$-tangle diagrams in $A$, compare with the proof of \cite[Thm. 3.1]{Prz}.
%%%%%%%%%%%%%%%%%%%%%%%%%%%%%%%%
\begin{theorem}\label{equivThm}
The linear categories $\cS$ and $\mathcal{TL}$ are equivalent.
\end{theorem}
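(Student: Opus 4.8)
The plan is to construct an explicit inverse-type functor, the relative Kauffman bracket, that sends an arbitrary tangle diagram in $A$ to a linear combination of affine diagrams, and then check that it induces a well-defined functor $\mathcal{S}\to\mathcal{TL}$ that is a two-sided inverse up to natural isomorphism of the obvious inclusion-type functor $F\colon\mathcal{TL}\to\mathcal{S}$. Concretely, $F$ is the functor that is the identity on objects and sends $\langle D\rangle$ to $[D]$ for an affine diagram $D$; it is manifestly full since every tangle diagram can be reduced to a linear combination of crossingless, contractible-loop-free diagrams using \eqref{kauffman} and \eqref{loopremoval}, and it is essentially surjective trivially. So the entire content is \emph{faithfulness} of $F$, equivalently that the affine diagrams remain linearly independent in $\textup{Hom}_{\mathcal{S}}(m,n)$.

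To prove faithfulness I would mimic Przytycki \cite[Thm.~3.1]{Prz}: define, for each $(m,n)$-tangle diagram $D$ in $A$, a state-sum $\langle\!\langle D\rangle\!\rangle\in\textup{Hom}_{\textup{Lin}_c(\mathcal{T})}(m,n)$ by summing over the $2^c$ smoothings of the $c$ crossings of $D$, each smoothing weighted by $t^{\pm 1/4}$ according to the usual $A/B$-type rule, then discarding contractible loops via $-(t^{1/2}+t^{-1/2})$ (which is already built into $\textup{Lin}_c(\mathcal{T})$); the result lands in the span of affine diagram classes, i.e.\ in $\textup{Hom}_{\mathcal{TL}}(m,n)$. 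The key lemma is that $\langle\!\langle\cdot\rangle\!\rangle$ is invariant under the Kauffman skein relation \eqref{kauffman} (immediate from the definition of the state sum) and under equivalence of tangle diagrams, i.e.\ under the Reidemeister moves R1', R2, R3 and their mirrors — this is the classical Kauffman computation, done locally in a disc, and it is unaffected by the presence of noncontractible loops or by the annular topology because all moves take place in a small disc in $A$. Granting this, $\langle\!\langle\cdot\rangle\!\rangle$ descends to a linear map $G\colon\textup{Hom}_{\mathcal{S}}(m,n)\to\textup{Hom}_{\mathcal{TL}}(m,n)$, and one checks $G\circ F=\mathrm{id}$ on $\textup{Hom}_{\mathcal{TL}}(m,n)$ because an affine diagram has no crossings, so its state sum is itself. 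Hence $F$ is injective on each hom-space, i.e.\ faithful; being also full and bijective on objects, it is an equivalence. (One should also note $G$ is a functor, i.e.\ compatible with composition, but for the equivalence statement faithfulness of $F$ already suffices, so this can be mentioned briefly or deferred.)

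The main obstacle, and the place where the annulus differs from the strip, is verifying that the state sum $\langle\!\langle D\rangle\!\rangle$ is genuinely well-defined on equivalence classes in $A$ — that is, that planar isotopies of $A$ fixing $\partial A$, together with the Reidemeister moves realized by isotopies of the thickened annulus, do not create inconsistencies. The subtlety is the R1' move near a strand that winds around the hole: one must confirm that the two curl-resolutions give the same element not just locally but after the possibly-nontrivial global embedding, and that noncontractible loops produced by a smoothing are correctly \emph{not} removed (only contractible ones are), so that the target space $\textup{Hom}_{\mathcal{TL}}(m,n)$, with its basis of \emph{all} affine diagrams (including those wrapping the hole arbitrarily many times), is the right codomain. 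Once one observes that every Reidemeister move and every planar isotopy is supported in a disc, the annular case reduces verbatim to the disc/strip case of \cite{Prz}, so the proof is: set up the state sum, cite/redo the local invariance computations, and conclude $G\circ F=\mathrm{id}$.
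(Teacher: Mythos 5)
Your proposal is correct and follows essentially the same route as the paper: the functor $\mathcal{F}:\mathcal{TL}\to\mathcal{S}$ is full and essentially surjective for the reasons you give, and faithfulness is established by the same Przytycki-style relative Kauffman bracket state sum (the paper's $\widehat{\psi}$ in \eqref{relKauffman}, with weights $(-t^{\frac{1}{2}}-t^{-\frac{1}{2}})^{c_D(S)}t^{(h_D(S)-v_D(S))/4}$), shown to descend through the two defining relations and to satisfy $\psi\circ\mathcal{F}=\mathrm{id}$ on affine diagram classes. The only cosmetic difference is that the paper checks compatibility with the Kauffman and loop removal relations directly rather than phrasing well-definedness via Reidemeister invariance; since equivalence of diagrams in $\mathcal{T}$ is planar isotopy only, this is all that is needed, and your disc-supported-moves observation covers the same ground.
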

%%%%%%%%%%%%%%%%%%%%%%%%%%%%%%%%
\begin{proof}
Consider the essentially surjective linear functor $\mathcal{F}: \mathcal{TL}\rightarrow \cS$
which is the identity on objects and maps $\langle D\rangle$ to $[D]$ for an affine $(m,n)$-diagram $D$.
It is clearly well defined since the loop removal relation
holds in $\cS$ as well as in $\mathcal{TL}$. 

Let $D$ be an $(m,n)$-tangle diagram in $A$. The Kauffman skein relation and the loop removal relation allow us to write $[D]$ as a linear combination of classes 
$[D^\prime]\in\textup{Hom}_{\mathcal{S}}(m,n)$
with the $D^\prime$'s being affine $(m,n)$-diagrams. It follows that the functor $\mathcal{F}$ is full.
It remains to show that $\mathcal{F}$ is faithful.

Suppose that $m+n$ is even and let $D$ be an $(m,n)$-tangle diagram in $A$ with $k$ crossing points. Let 
$\mathcal{S}_D$ be the set of cardinality $2^k$ containing the $(m,n)$-tangle diagrams $S$ without crossings that are obtained from $D$ by removing each crossing \cross
in $D$ by either \monoid or \id.  For $S\in\mathcal{S}_D$ let $h_D(S)$ (respectively $v_D(S)$) be the number of crossing
points at which \cross is replaced by \monoid (respectively \id).
Let $c_D(S)$ be the
number of loops in $S$ that are contractible in $A$, and write $\widetilde{S}$ for the affine 
$(m,n)$-diagram obtained from $S$ by removing these contractible loops.

It is easy to see that there exists a well defined linear map
\[
\widehat{\psi}: \mathbb{C}[\textup{Hom}_{\mathcal{T}}(m,n)]\rightarrow 
\textup{Hom}_{\mathcal{TL}}(m,n)
\]
satisfying
\begin{equation}\label{relKauffman}
\widehat{\psi}(\overline{D}):=\sum_{S\in\mathcal{S}_D}\bigl(-t^{\frac{1}{2}}-t^{-\frac{1}{2}}\bigr)^{c_D(S)}
t^{(h_D(S)-v_D(S))/4}\langle\widetilde{S}\rangle
\end{equation}
for all $(m,n)$-tangle diagrams $D$ in $A$. Direct computations show that 
the map $\widehat{\psi}$ respects the Kauffman skein relation \eqref{kauffman}
and the loop removal relation \eqref{loopremoval}, so it gives rise to a linear map 
\[
\psi: \textup{Hom}_{\cS}(m,n)\rightarrow\textup{Hom}_{\mathcal{TL}}(m,n)
\]
satisfying $\psi([D])=\widehat{\psi}(\overline{D})$ for $(m,n)$-tangle diagrams $D$ in $A$.

By the Kaufmann skein relation \eqref{kauffman} and the loop removal relation
\eqref{loopremoval}, the linear map $\psi$ is the inverse of the linear map $\mathcal{F}: \textup{Hom}_{\mathcal{TL}}(m,n)\rightarrow 
\textup{Hom}_{\mathcal{S}}(m,n)$. This shows that $\mathcal{F}$ is faithful.
\end{proof}
%%%%%%%%%%%%%%%%%%%%%%%%%%%%%%%%%%%%
\begin{remark}
The special case $\textup{End}_{\mathcal{S}}(0)\simeq\textup{End}_{\mathcal{TL}}(0)$
was established for general surfaces in \cite[Lem. 3.3]{Prz}.
\end{remark}
%%%%%%%%%%%%%%%%%%%%%%%%%%%%%%%%%%%%%%%%%%
\begin{definition}
We call $\psi([D])=\widehat{\psi}(\overline{D})\in\textup{Hom}_{\mathcal{TL}}(m,n)$ (see \eqref{relKauffman}) the {\it relative Kauffman bracket} of the $(m,n)$-tangle diagram $D$ in $A$.
\end{definition}
%%%%%%%%%%%%%%%%%%%%%%%%%%%%%
\begin{remark}
Note that for $(m,n)=(0,0)$, the relative Kauffman bracket $\psi([D])$ of a link diagram $D$ in $A$
lands in the algebra 
$\textup{End}_{\mathcal{TL}}(0)$, which is isomorphic to the algebra of polynomials in one variable
(the variable corresponds to the equivalence class of a noncontractible loop in $A$). Evaluating
the resulting polynomial at $-t^{\frac{1}{2}}-t^{-\frac{1}{2}}$ can be thought of as closing the hole of the annulus and viewing the link diagram as an element in the skein module of the disc (or equivalently, of the plane). As a result one obtains the usual Kauffman \cite{Kau} bracket of $D$, viewed as a link diagram in the plane (see \cite{Lic} and \cite[\S 1.7]{Mor}).
\end{remark}
%%%%%%%%%%%%%%%%%%%%%%%%%%%%%%

%%%%%%%%%%%%%%%%%%%%%%%%%%%%%%%%%%%%%
\section{The extended affine Temperley-Lieb algebra}\label{section5}
%%%%%%%%%%%%%%%%%%%%%%%%%%%%%%%%%%%%%

Write $\textup{TL}_0:=\mathbb{C}[X]$ for the algebra of complex polynomials in one variable $X$ and $\textup{TL}_1:=\mathbb{C}[\rho,\rho^{-1}]$ for the algebra of complex Laurent polynomials in 
the variable $\rho$. Let $\textup{TL}_2$ be the complex associative unital algebra with generators
$e_1,e_2,\rho,\rho^{-1}$ and defining relations
	\begin{align*}
	 &e_i^2 = \bigl(-t^{\frac{1}{2}} - t^{-\frac{1}{2}}\bigr) e_i,   \\
		 	&\rho e_i =  e_{i+1} \rho, \\
		 & \rho\rho^{-1}=1=\rho^{-1}\rho,\\
		 & \rho^2 e_1 = e_1,
	\end{align*}
where the indices are taken modulo two. Finally, for $n\geq 3$ let $\textup{TL}_n$ be the
complex associative unital algebra with generators $e_1,e_2,\ldots,e_{n},\rho,\rho^{-1}$ and
defining relations
\begin{equation}\label{relTL}
\begin{split}
	 &e_i^2 = \bigl(-t^{\frac{1}{2}} - t^{-\frac{1}{2}}\bigr) e_i,   \\
	 &e_ie_j = e_je_i\qquad\qquad\quad \hbox{ if }\,\, i-j\not=\pm 1,\\
	 &e_ie_{i\pm 1}e_i = e_i,\\
		 	&\rho e_i =  e_{i+1} \rho, \\
		 & \rho\rho^{-1}=1=\rho^{-1}\rho,\\
		 & \bigl(\rho e_1\bigr)^{n-1}  = \rho^n (\rho e_1), 		 
\end{split}
\end{equation}
where the indices are taken modulo $n$. Observe that the last defining relation $(\rho e_1)^{n-1}=\rho^n(\rho e_1)$
in \eqref{relTL} can be replaced by
\[
\rho^2e_{n-1}=e_1e_2\cdots e_{n-1}.
\]
Note that $\textup{TL}_n=\textup{TL}_n(t^{\frac{1}{2}})$ for $n\geq 2$ depends on the nonzero complex parameter $t^{\frac{1}{2}}$, which we omit from the notations if no confusion can arise. %%%%%%%%%%%%%%%%%%%%%%%%%%%%%%%%%%%%%%
\begin{remark}\label{restrictedgenerators}
The definition for $n=2$ and $n\geq 3$ can be placed at the same footing by describing 
$\textup{TL}_n$ in terms of the smaller set $e_1,e_2,\ldots,e_{n-1},\rho,\rho^{-1}$ of algebraic generators. The defining relations then are 
\begin{equation}\label{relTLalt}
\begin{split}
	 &e_i^2 = \bigl(-t^{\frac{1}{2}} - t^{-\frac{1}{2}}\bigr) e_i, ,\qquad 1\leq i<n,\\
	 &e_ie_j = e_je_i\qquad\,\,\,\quad\quad\qquad\, 1\leq i,j<n \,\,\hbox{ and }
	 \,\, i-j\not=\pm 1,\\
	 &e_ie_{i\pm 1}e_i = e_i,\,\,\quad\qquad\qquad\, 1\leq i,i\pm 1<n,\\
		 	&\rho e_i =  e_{i+1} \rho,\qquad\qquad\qquad 1\leq i<n-1,\\
			&\rho^2e_{n-1}=e_1\rho^2,\\
		 & \rho\rho^{-1}=1=\rho^{-1}\rho,\\
		 & \rho^2e_{n-1}=e_1e_2\cdots e_{n-1}.		 
\end{split}
\end{equation}
 
\end{remark}

%%%%%%%%%%%%%%%%%%%%%%%%%%%%%%%%%%%%%
\begin{definition}[\cite{Gre}]
$\textup{TL}_n$ is called the {\it ($n$th) extended affine Temperley-Lieb
algebra}. 
\end{definition}
%%%%%%%%%%%%%%%%%%%%%%%%%%%%%%%%%%%%
Denote $\mathcal{TL}_n:=\textup{End}_{\mathcal{TL}}(n)$ for the algebra of endomorphisms
of $n$ in the affine Tem\-per\-ley-Lieb category $\mathcal{TL}$. The following result is essentially due to Green \cite{Gre}.

%%%%%%%%%%%%%%%%%%%%%%%%%%%%%%%%%%%%
\begin{theorem}\label{presentationThm}
%\begin{itemize}
{\bf a.} $\textup{TL}_0\simeq \mathcal{TL}_0$ with the algebra isomorphism
$\textup{TL}_0\rightarrow\mathcal{TL}_0$ defined by
  $$X \mapsto  
	\begin{pspicture}[shift=-0.8](-1.5,-1)(1.5,1)
	\SpecialCoor
	\definecolor{line}{rgb}{0,0,1}
	\pscircle[fillstyle=ccslope,slopebegin=diskout,slopeend=diskin,linewidth=1.2pt](0,0){1}
	\pscircle[fillstyle=solid, fillcolor=white, linewidth=1.2pt](0,0){0.25}
	\pscircle[linecolor=line,linewidth=1.2pt](0,0){0.7}
	\end{pspicture}$$

{\bf b.} $\textup{TL}_1\simeq\mathcal{TL}_1$ with the algebra isomorphism
$\textup{TL}_1\rightarrow\mathcal{TL}_1$ defined by
\begin{equation}
\rho\mapsto
\begin{pspicture}[shift=-0.8](-1.5,-1)(1.5,1)
	\SpecialCoor
	\definecolor{line}{rgb}{0,0,1}
	\pscircle[fillstyle=ccslope,slopebegin=diskin,slopeend=diskout,linewidth=1.2pt](0,0){1}
	\pscircle[fillstyle=solid, fillcolor=white, linewidth=1.2pt](0,0){0.25}
	\degrees[8]
	\pscurve[linecolor=line,linewidth=1.2pt](0.25;0)(0.35;0)(0.5;1)(0.6;2)(0.6;3)(0.6;4)(0.6;5)(0.6;6)(0.6;7)(0.8;8)(1;8)
	\rput{0}(1.2;0){\tiny $1$}
\end{pspicture}\label{Dehntwistrho}
\end{equation}

{\bf c.} $\textup{TL}_2\simeq\mathcal{TL}_2$ with the algebra isomorphism $\textup{TL}_2\rightarrow
\mathcal{TL}_2$ defined by

\begin{equation}\label{map2}
	\rho \mapsto 
	\begin{pspicture}[shift=-0.8](-1.5,-1)(1.5,1)
                     	\SpecialCoor
                    	\definecolor{line}{rgb}{0,0,1}
                    	\pscircle[fillstyle=ccslope,slopebegin=diskin,slopeend=diskout,linewidth=1.2pt](0,0){1}
                    	\pscircle[fillstyle=solid, fillcolor=white, linewidth=1.2pt](0,0){0.25}
                    	\degrees[8]
                    	\rput{0}(1.2;0){\tiny $1$}
                    	\rput{0}(1.2;4){\tiny $2$}
			\psplot[algebraic, plotpoints=400,linecolor=line,polarplot=true,linewidth=1.2pt]{0}{Pi }{(2/Pi)*ASIN(2*x/Pi-1) +(5*Pi-5*x)/(4*Pi)}
			\rput{4}(0,0){\psplot[algebraic, plotpoints=400,linecolor=line,polarplot=true,linewidth=1.2pt]{0}{Pi }{(2/Pi)*ASIN(2*x/Pi-1) +(5*Pi-5*x)/(4*Pi)}}
  		\end{pspicture}	
,
 \qquad
\qquad
	e_1 \mapsto 
	\begin{pspicture}[shift=-0.8](-1.5,-1)(1.5,1)
	\SpecialCoor
	\definecolor{line}{rgb}{0,0,1}
	\pscircle[fillstyle=ccslope,slopebegin=diskin,slopeend=diskout,linewidth=1.2pt](0,0){1}
	\pscircle[fillstyle=solid, fillcolor=white, linewidth=1.2pt](0,0){0.25}
	\degrees[8]
	\pscurve[linecolor=line,linewidth=1.2pt](0.25;0)(0.35;0)(0.4;2)(0.35;4)(0.25;4)
	\pscurve[linecolor=line,linewidth=1.2pt](1;0)(0.9;0)(0.7;1.5)(0.7;2.5)(0.9;4)(1;4)
	\rput{0}(1.2;0){\tiny $\,1$}
	\rput{0}(1.2;4){\tiny $2\,$}
\end{pspicture},
\qquad
	e_2 \mapsto  \begin{pspicture}[shift=-0.8](-1.5,-1)(1.5,1)
	\SpecialCoor
	\definecolor{line}{rgb}{0,0,1}
	\pscircle[fillstyle=ccslope,slopebegin=diskin,slopeend=diskout,linewidth=1.2pt](0,0){1}
	\pscircle[fillstyle=solid, fillcolor=white, linewidth=1.2pt](0,0){0.25}
	\degrees[8]
	\pscurve[linecolor=line,linewidth=1.2pt](0.25;0)(0.35;0)(0.4;6)(0.35;4)(0.25;4)
	\pscurve[linecolor=line,linewidth=1.2pt](1;0)(0.9;0)(0.7;6.5)(0.7;5.5)(0.9;4)(1;4)
	\rput{0}(1.2;0){\tiny $\,1$}
	\rput{0}(1.2;4){\tiny $2\,$}
\end{pspicture}
\end{equation}
{\bf d.} If $n\geq 3$ then $\textup{TL}_n\simeq\mathcal{TL}_n$ with the algebra isomorphism 
$\textup{TL}_n\rightarrow\mathcal{TL}_n$ defined by
\begin{equation}\label{mapn}
			\rho \mapsto    \begin{pspicture}[shift=-1.4](-1.5,-1.5)(1.5,1.5)
    	\pscircle[fillstyle=ccslope,slopebegin=diskin,slopeend=diskout,linewidth=1.2pt](0,0){1.25}
    	\pscircle[fillstyle=solid, fillcolor=white, linewidth=1.2pt](0,0){0.5}
    	\degrees[16]
     	\rput{0}(1.5;0){\tiny $1$}
	\rput{0}(1.5;1){\tiny $2$}
     	\rput{0}(0.25;0.5){\tiny $1$}
	\rput{0}(0.25;14){\tiny $n$}
	\pscurve[linecolor=line,linewidth=1.2pt](1.25;0)(1.15;0)(0.85;15.5)(0.6;15)(0.5;15)
	\pscurve[linecolor=line,linewidth=1.2pt](1.25;1)(1.15;1)(0.85;0.5)(0.6;0)(0.5;0)
	\psarc[linestyle=dotted, linecolor=line,linewidth=1.2pt](0,0){0.85}{2}{14}
   \end{pspicture}\,\,\,\,\,,   
   \qquad\qquad
			e_i \mapsto    \begin{pspicture}[shift=-1.65](-1.75,-1.75)(1.5,1.5)
       	\SpecialCoor
    	\pscircle[fillstyle=ccslope,slopebegin=diskin,slopeend=diskout,linewidth=1.2pt](0,0){1.25}
    	\pscircle[fillstyle=solid, fillcolor=white, linewidth=1.2pt](0,0){0.5}
    	\degrees[17]
     	\rput{0}(1.5;0){\tiny $1$}
	\rput{0}(1.7;9.8){\tiny $i\!-\!1$}
    	\rput{0}(1.6;10.9){\tiny $i$}
	\rput{0}(1.6;11.9){\tiny $i\!+\!1$}
	\rput{0}(1.6;13.3){\tiny $i\!+\!2$}
     	\rput{0}(0.25;0){\tiny $1$}
    	\rput{0}(0.25;10.8){\tiny $i$}
 	\psline[linecolor=line,linewidth=1.2pt](0.5;0)(1.25;0)
	\psline[linecolor=line,linewidth=1.2pt](0.5;10)(1.25;10)
	\psline[linecolor=line,linewidth=1.2pt](0.5;13)(1.25;13)
	\pscurve[linecolor=line,linewidth=1.2pt](0.5;11)(0.6;11)(0.75;11.5)(0.6;12)(0.5;12)
	\pscurve[linecolor=line,linewidth=1.2pt](1.25;11)(1.15;11)(0.9;11.5)(1.15;12)(1.25;12)
	\psarc[linestyle=dotted, linecolor=line,linewidth=1.2pt](0,0){0.85}{1}{9}
	\psarc[linestyle=dotted, linecolor=line,linewidth=1.2pt](0,0){0.85}{14}{16}
   \end{pspicture} 
\end{equation}
for $i=1,\ldots,n$ (with the indices and the labels of the marked points taken modulo $n$).
\end{theorem}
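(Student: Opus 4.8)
The plan is to prove Theorem \ref{presentationThm} by establishing, for each $n$, that the indicated diagrammatic assignments extend to a surjective algebra homomorphism $\Phi_n\colon \textup{TL}_n\to\mathcal{TL}_n$, and then that $\Phi_n$ is injective. Surjectivity and the relations will be a direct diagrammatic check; injectivity is where the real content of Green's theorem \cite{Gre} is invoked. Concretely: for parts \textbf{a} and \textbf{b}, $\mathcal{TL}_0$ and $\mathcal{TL}_1$ are identified by inspection, since an affine $(0,0)$-diagram is determined up to isotopy by its number of noncontractible loops around the hole (giving $\mathbb{C}[X]$ after loop removal of contractibles) and an affine $(1,1)$-diagram must be a single arc winding $k\in\mathbb{Z}$ times around the hole, giving a basis $\{\rho^k\}_{k\in\mathbb{Z}}$ and hence $\mathbb{C}[\rho,\rho^{-1}]$. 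These cases need no appeal to a presentation theorem beyond the classification of affine diagrams.

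For parts \textbf{c} and \textbf{d}, first I would verify that the images of $e_1,\dots,e_n,\rho,\rho^{\pm1}$ under the stated maps satisfy all the defining relations \eqref{relTL} (or equivalently \eqref{relTLalt}) in $\mathcal{TL}_n=\textup{End}_{\mathcal{TL}}(n)$. Each relation is a local identity of affine diagrams on the annulus: $e_i^2=(-t^{1/2}-t^{-1/2})e_i$ is the loop removal relation applied to the contractible loop formed by composing the cap-cup diagram with itself; $e_ie_j=e_je_i$ for $|i-j|\ne1$ is disjointness of the local pictures; $e_ie_{i\pm1}e_i=e_i$ is the usual planar isotopy straightening a zig-zag; $\rho e_i=e_{i+1}\rho$ and $\rho\rho^{-1}=1$ follow because $\rho$ acts as the rotation/Dehn-twist shift by one marked point; and the closing relation $\rho^2e_{n-1}=e_1e_2\cdots e_{n-1}$ (the convenient replacement of $(\rho e_1)^{n-1}=\rho^n(\rho e_1)$ noted above) is the diagrammatic identity saying that the product $e_1\cdots e_{n-1}$, which connects the two ends on one side of the cut to the two ends on the other side through a cascade of cap-cups, equals the double-twist $\rho^2$ composed with the single cap-cup $e_{n-1}$. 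This gives a well-defined algebra map $\Phi_n\colon\textup{TL}_n\to\mathcal{TL}_n$. Surjectivity follows from the fact that every affine $(n,n)$-diagram is a product of the $e_i$'s and a power of $\rho$: decompose the diagram through its ``through-strands'' and non-through strands, using $\rho$-powers to rotate the endpoints into standard position and $e_i$-words to build the cap-cup pattern, exactly as in the strip (non-affine) Temperley-Lieb case together with the extra winding data tracked by $\rho$.

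The main obstacle — and the step for which I would cite Green \cite{Gre} rather than reprove from scratch — is injectivity of $\Phi_n$. The cleanest route is a dimension/basis count: on the algebraic side one shows $\textup{TL}_n$ has a spanning set indexed by the same combinatorial data that indexes affine $(n,n)$-diagrams (reduced words of the form $\rho^k w$ with $w$ a suitable normal form in the $e_i$'s, with the closing relation used to bound the winding of non-through-strand components), so $\dim$-wise $\Phi_n$ maps a spanning set of $\textup{TL}_n$ onto the diagram basis of $\mathcal{TL}_n$; since that basis is linearly independent in $\mathcal{TL}_n$, the spanning set must be a basis of $\textup{TL}_n$ and $\Phi_n$ is a bijection. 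Equivalently, one can construct an explicit inverse $\mathcal{TL}_n\to\textup{TL}_n$ on the diagram basis by reading off a normal form and checking it respects composition. Either way the crux is that Green's presentation \cite{Gre} of the $n$-affine diagram algebra matches \eqref{relTL} on the nose; granting that, parts \textbf{c} and \textbf{d} follow, and the whole theorem is assembled by combining these four cases. I would present \textbf{c} as the $n=2$ instance requiring the small separate argument (because the braid-type relation $e_ie_{i\pm1}e_i=e_i$ is vacuous and the role of $\rho^2e_1=e_1$ must be checked by hand on the pictures in \eqref{map2}), and \textbf{d} as the uniform $n\ge3$ case.
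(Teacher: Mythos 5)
Your proposal matches the paper's strategy: parts \textbf{a}, \textbf{b} are handled by the elementary classification of affine $(0,0)$- and $(1,1)$-diagrams, part \textbf{d} is deferred to Green's presentation theorem, and part \textbf{c} is done by hand by checking the relations and then matching an explicit spanning set of $\textup{TL}_2$ (the paper lists it concretely as $\{\rho^m\}\cup\{(e_2e_1)^k,\rho(e_2e_1)^k,e_1(e_2e_1)^k,\rho e_1(e_2e_1)^k\}\cup\cdots$) bijectively onto the diagram basis $\mathcal{D}_2$ of $\mathcal{TL}_2$. The only difference is that you leave the $n=2$ spanning set as a strategy rather than writing it out, but the argument is the same.
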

%%%%%%%%%%%%%%%%%%%%%%%%%%%%%%%%%%%%%
\begin{proof}
{\bf a} and {\bf b} are well known (see, for instance, \cite[\S 1.7]{Mor} and \cite[\S 4]{Mor2}), while
{\bf d} is due to Green \cite[Prop. 2.3.7]{Gre}.\\
{\it Proof of} {\bf c}: A direct check shows that there exists a unique unital algebra homomorphism 
$\phi: \textup{TL}_2\rightarrow \mathcal{TL}_2$ satisfying \eqref{map2}. 

Recall that the set $\mathcal{D}_2$ of affine $(2,2)$-diagrams form a linear basis of $\mathcal{TL}_2$. The affine $(2,2)$-diagrams can be described explicitly as follows.

The affine $(2,2)$-diagram $\phi(\rho^m)$ for $m\in\mathbb{Z}$
is obtained from the identity element of $\mathcal{TL}_2$ by winding the outer boundary counterclockwise by an angle of $m\pi$. It follows that the pairwise distinct affine $(2,2)$-diagrams 
$\phi(\rho^m)$ ($m\in\mathbb{Z}$) form the subset of $\mathcal{D}_2$ consisting of
affine $(2,2)$-diagrams whose arcs all connect the inner boundary with the outer boundary.
The remaining affine $(2,2)$-diagrams are the diagrams of the form
\begin{equation*}
  \begin{pspicture}[shift=-0.8](-1.5,-1.25)(1.5,1.25)
	\SpecialCoor
	\definecolor{line}{rgb}{0,0,1}
	\pscircle[fillstyle=ccslope,slopebegin=diskin,slopeend=diskout,linewidth=1.2pt](0,0){1}
	\pscircle[fillstyle=solid, fillcolor=white, linewidth=1.2pt](0,0){0.25}
	\degrees[8]
	\pscurve[linecolor=line,linewidth=1.2pt](0.25;0)(0.35;0)(0.4;6)(0.35;4)(0.25;4)
	\pscurve[linecolor=line,linewidth=1.2pt](1;0)(0.9;0)(0.7;6.5)(0.7;5.5)(0.9;4)(1;4)
	\rput{0}(1.2;0){\tiny $1$}
	\rput{0}(1.2;4){\tiny $2$}
\end{pspicture}
\qquad\quad
 \begin{pspicture}[shift=-0.8](-1.5,-1.25)(1.5,1.25)
	\SpecialCoor
	\definecolor{line}{rgb}{0,0,1}
	\pscircle[fillstyle=ccslope,slopebegin=diskin,slopeend=diskout,linewidth=1.2pt](0,0){1}
	\pscircle[fillstyle=solid, fillcolor=white, linewidth=1.2pt](0,0){0.25}
	\degrees[8]
	\pscurve[linecolor=line,linewidth=1.2pt](0.25;0)(0.35;0)(0.4;2)(0.35;4)(0.25;4)
	\pscurve[linecolor=line,linewidth=1.2pt](1;0)(0.9;0)(0.7;1.5)(0.7;2.5)(0.9;4)(1;4)
	\rput{0}(1.2;0){\tiny $1$}
	\rput{0}(1.2;4){\tiny $2$}
\end{pspicture}
\qquad\quad
 \begin{pspicture}[shift=-0.8](-1.5,-1.25)(1.5,1.25)
	\SpecialCoor
	\definecolor{line}{rgb}{0,0,1}
	\pscircle[fillstyle=ccslope,slopebegin=diskin,slopeend=diskout,linewidth=1.2pt](0,0){1}
	\pscircle[fillstyle=solid, fillcolor=white, linewidth=1.2pt](0,0){0.25}
	\degrees[8]
	\pscurve[linecolor=line,linewidth=1.2pt](0.25;0)(0.35;0)(0.4;2)(0.35;4)(0.25;4)
	\pscurve[linecolor=line,linewidth=1.2pt](1;0)(0.9;0)(0.7;6.5)(0.7;5.5)(0.9;4)(1;4)
	\rput{0}(1.2;0){\tiny $1$}
	\rput{0}(1.2;4){\tiny $2$}
\end{pspicture}  
\qquad\quad
\begin{pspicture}[shift=-0.8](-1.5,-1.25)(1.5,1.25)
	\SpecialCoor
	\definecolor{line}{rgb}{0,0,1}
	\pscircle[fillstyle=ccslope,slopebegin=diskin,slopeend=diskout,linewidth=1.2pt](0,0){1}
	\pscircle[fillstyle=solid, fillcolor=white, linewidth=1.2pt](0,0){0.25}
	\degrees[8]
	\pscurve[linecolor=line,linewidth=1.2pt](0.25;0)(0.35;0)(0.4;6)(0.35;4)(0.25;4)
	\pscurve[linecolor=line,linewidth=1.2pt](1;0)(0.9;0)(0.7;1.5)(0.7;2.5)(0.9;4)(1;4)
	\rput{0}(1.2;0){\tiny $1$}
	\rput{0}(1.2;4){\tiny $2$}
\end{pspicture}
\end{equation*}
in which $r$ nonintersecting, noncontractible loops 
are inserted for some $r\in\mathbb{Z}_{\geq 0}$.
For $r=2k$ the resulting four types of affine $(2,2)$-diagrams are 
\[
\phi(e_2(e_1e_2)^k),\quad \phi(e_1(e_2e_1)^k),\quad \phi(\rho e_1(e_2e_1)^k), \quad \phi(\rho e_2(e_1e_2)^k).
\]
For $r=2k+1$ they are 
\[
\phi(\rho (e_1e_2)^k),\quad \phi(\rho (e_2e_1)^k), \quad
\phi((e_2e_1)^k),\quad \phi((e_1e_2)^k).
\]
Hence $\phi$ maps the subset
\begin{equation}\label{setTL}
\begin{split}
\{\rho^m\}_{m\in\mathbb{Z}}&\cup 
\{(e_2e_1)^k, \rho (e_2e_1)^k, e_1(e_2e_1)^k, \rho e_1(e_2e_1)^k\}_{k\in\mathbb{Z}_{\geq 0}}\\
&\cup  \{(e_1e_2)^k, \rho(e_1e_2)^k, e_2(e_1e_2)^k, \rho e_2(e_1e_2)^k\}_{k\in\mathbb{Z}_{\geq 0}}
\end{split}
\end{equation}
of $\textup{TL}_2$ bijectively onto the linear basis $\mathcal{D}_2$
of $\mathcal{TL}_2$.
By the defining relations in $\textup{TL}_2$ we see that \eqref{setTL} spans $\textup{TL}_2$.
We conclude that  
$\phi$ is an isomorphism of algebras.
\end{proof}
%%%%%%%%%%%%%%%%%%%%%%%%%%%%%%%%%%%%%%%%%%%%%%

%%%%%%%%%%%%%%%%%%%%%%%%%%%
\begin{remark}\label{isoRem}
By Theorem \ref{equivThm} we now also have a skein-theoretic description 
$\textup{End}_{\mathcal{S}}(n)$ of the $n$th extended affine Temperley-Lieb algebra,
\begin{equation}\label{isoalg}
\textup{TL}_n\simeq\mathcal{TL}_n\simeq\textup{End}_{\cS}(n).
\end{equation}
The skein-theoretic description of the finite Temperley-Lieb algebra is described in
\cite{Kau, Kau2,Lic,Mor}.\\
\end{remark}
%%%%%%%%%%%%%%%%%%%%%%%%%%%

%%%%%%%%%%%%%%%%%%%%%%%%%%%%%%%%%%%%%%%%
\section{The arc insertion functor}\label{section6}
%%%%%%%%%%%%%%%%%%%%%%%%%%%%%%%%%%%

\begin{definition}
The arc insertion functor $\mathcal{I}: \mathcal{S}\rightarrow\mathcal{S}$ is the endofunctor
$\mathcal{I}:=-\times_{\mathcal{S}}1$, defined concretely by
\begin{equation*}
\begin{split}
\mathcal{I}(m)&:=m\times_{\mathcal{S}}1=m+1,\\
\mathcal{I}\bigl([D]\bigr)&:=[D]\times_{\mathcal{S}}\mathbf{1}_1=[D\ast\textup{Id}_1]
\end{split}
\end{equation*}
for $m\in\mathbb{Z}_{\geq 0}$ and for tangle diagrams $D$.
\end{definition}
%%%%%%%%%%%%%%%%%%%%%%%%%%%%%%%%%%%%%
Let $D$ be an $(m,n)$-tangle diagram and write $D^{ins}=D\ast\textup{Id}_1$, so that
$\mathcal{I}([D])=[D^{ins}]$.
The $(m+1,n+1)$-tangle diagram $D^{ins}$ is obtained from $D$ by inserting an arc in $D$
connecting the inner boundary of $A$ with its outer boundary and going underneath all arcs it meets. 
See Section \ref{section3} for the specific requirements on the location of the endpoints and on the winding of the inserted arc. We give two examples. 
%%%%%%%%%%%%%%%%%%%%%%%%%%%%%%%%%%%%%
\begin{example}
For the $(0,2)$-tangle diagram $D_1$ and the $(1,3)$-tangle diagram $D_2$
given by
$$ 
D_1:= \begin{pspicture}[shift=-1.1](-1.25,-1.25)(1.25,1.25)
	\SpecialCoor
	\pscircle[fillstyle=ccslope,slopebegin=diskin,slopeend=diskout,linewidth=1.2pt](0,0){1}
	\pscircle[fillstyle=solid, fillcolor=white, linewidth=1.2pt](0,0){0.25}
	\degrees[8]
	\pscurve[linecolor=line,linewidth=1.2pt](1;0)(0.9;0)(0.7;6.5)(0.7;5.5)(0.9;4)(1;4)
	\rput{0}(1.2;0){\tiny $1$}
	\rput{0}(1.2;4){\tiny $2$}
\end{pspicture} 
\qquad
D_2:=
\begin{pspicture}[shift=-1](-1.1,-1.25)(1.25,1.25)
    	\SpecialCoor
    	\pscircle[fillstyle=ccslope,slopebegin=diskin,slopeend=diskout,linewidth=1.2pt](0,0){1}
    	\pscircle[fillstyle=solid, fillcolor=white, linewidth=1.2pt](0,0){0.25}
    	\degrees[3]
     	\rput{0}(1.2;0){\tiny $1$}
    	\rput{0}(1.2;1){\tiny $2$}
    	\rput{0}(1.2;2){\tiny $3$}
	\psline[linecolor=line,linewidth=1.2pt](1;0)(0.7;0)
	\psline[linecolor=line,linewidth=1.2pt] (0.25;0)(0.5;0)
	\pscurve[linecolor=line,linewidth=1.2pt](1;1)(0.9;1)(0.6;0)(0.9;2)(1;2)
   \end{pspicture} 
$$
we get
$$
D_1^{ins} =
    \begin{pspicture}[shift=-1](-1,-1)(1,1)
    	\SpecialCoor
	\definecolor{diskmid}{gray}{0.8}
    	\pscircle[fillstyle=ccslope,slopebegin=diskin,slopeend=diskout,linewidth=1.2pt](0,0){1}
    	\pscircle[fillstyle=solid, fillcolor=white, linewidth=1.2pt](0,0){0.25}
    	\degrees[3]
     	\rput{0}(1.2;0){\tiny $1$}
    	\rput{0}(1.2;1){\tiny $2$}
    	\rput{0}(1.2;2){\tiny $3$}
	\pscurve[linecolor=line,linewidth=1.2pt](1;2)(0.9;2)(0.4;1.75)(0.4;1.5)(0.4;1)(0.4;0.5)(0.35;0)(0.25;0)
	\psdot[linecolor=diskmid,dotsize=0.3](0.62; 1.96)
	\pscurve[linecolor=line,linewidth=1.2pt](1;0)(0.9;0)(0.65;2.5)(0.65;2)(0.65;1.5)(0.9;1)(1;1)
   \end{pspicture} 
   \qquad
   D_2^{ins} =
   \begin{pspicture}[shift=-1](-1.3,-1.25)(1.25,1.25)
      	\SpecialCoor
	\definecolor{diskmid}{gray}{0.8}
    	\pscircle[fillstyle=ccslope,slopebegin=diskin,slopeend=diskout,linewidth=1.2pt](0,0){1}
    	\pscircle[fillstyle=solid, fillcolor=white, linewidth=1.2pt](0,0){0.25}
    	\degrees[4]
     	\rput{0}(1.2;0){\tiny $1$}
    	\rput{0}(1.2;1){\tiny $2$}
    	\rput{0}(1.2;2){\tiny $3$}
	\rput{0}(1.2;3){\tiny $4$}
	\psline[linecolor=line,linewidth=1.2pt](1;0)(0.7;0)
	\psline[linecolor=line,linewidth=1.2pt] (0.25;0)(0.5;0)
	\pscurve[linecolor=line,linewidth=1.2pt](1;3)(0.9;3)(0.6;2.6)(0.3;2)(0.25;2)
	\psdot[linecolor=diskmid,dotsize=0.3](0.49;2.47)
	\pscurve[linecolor=line,linewidth=1.2pt](1;1)(0.9;1)(0.6;0)(0.6;3.5)(0.5;3)(0.5;2.5)(0.9;2)(1;2)	
   \end{pspicture} 
$$
\end{example}
%%%%%%%%%%%%%%%%%%%%%%%%%%%%%%%%%%%%

For $n\in\mathbb{Z}_{\geq 0}$ consider the unit preserving algebra map
\[
\mathcal{I}_n:=\mathcal{I}\vert_{\cS_n}: \textup{End}_{\cS}(n)\rightarrow\textup{End}_{\cS}(n+1).
\] 
In terms
of the algebra maps $\epsilon_{m,n}$ (see Corollary \ref{algemaps}) we have
$\mathcal{I}_n([D])=\epsilon_{n,1}([D]\otimes \mathbf{1}_1)$. The map $\mathcal{I}_n$ can be interpreted as an
algebra map $\mathcal{I}_n: \textup{TL}_n\rightarrow\textup{TL}_{n+1}$ since
$\textup{TL}_n\simeq\textup{End}_{\cS}(n)$ (see Theorem \ref{presentationThm} and Remark \ref{isoRem}). In the following proposition
we explicitly compute $\mathcal{I}_n$ on the algebraic generators of $\textup{TL}_n$.
%%%%%%%%%%%%%%%%%%%%%%%%%%%%%%%%%
\begin{proposition}\label{insertionProp}
\begin{enumerate}
\item[{\bf a.}] $\mathcal{I}_0(X)=t^{\frac{1}{4}}\rho+t^{-\frac{1}{4}}\rho^{-1}$.
\item[{\bf b.}] $\mathcal{I}_1(\rho)=\rho(t^{-\frac{1}{4}}e_1+t^{\frac{1}{4}})$ 
and $\mathcal{I}_1(\rho^{-1})=
(t^{\frac{1}{4}}e_1+t^{-\frac{1}{4}})\rho^{-1}$.
\item[{\bf c.}] For $n\geq 2$ we have
\begin{equation*}
\begin{split}
&\mathcal{I}_n(e_i)=e_i,\qquad i=1,\ldots,n-1,\\
&\mathcal{I}_n(e_n)=(t^{\frac{1}{4}}e_n+t^{-\frac{1}{4}})e_{n+1}(t^{-\frac{1}{4}}e_n+t^{\frac{1}{4}}),\\
&\mathcal{I}_n(\rho)=\rho(t^{-\frac{1}{4}}e_n+t^{\frac{1}{4}}),\\
&\mathcal{I}_n(\rho^{-1})=(t^{\frac{1}{4}}e_n+t^{-\frac{1}{4}})\rho^{-1}.
\end{split}
\end{equation*}
\end{enumerate}
\end{proposition}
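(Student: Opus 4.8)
The plan is to compute $\mathcal{I}_n$ on generators directly from the skein-theoretic
description $\mathcal{I}_n([D])=[D\ast\mathrm{Id}_1]=[D^{ins}]$, i.e.\ by drawing the tangle
diagram $D^{ins}$ obtained from the standard diagram of each generator (as in
Theorem~\ref{presentationThm}) by inserting the defect arc that connects the inner and
outer boundary while undercrossing everything it meets, and then resolving the resulting
crossings with the Kauffman skein relation \eqref{kauffman}, the loop removal relation
\eqref{loopremoval}, and the Reidemeister moves. Throughout I would identify
$\textup{End}_{\mathcal{S}}(n)\simeq\textup{TL}_n$ via Theorem~\ref{presentationThm} so that
the right-hand sides are read off as words in $e_i,\rho^{\pm1}$.

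First, part~\textbf{a}: the generator $X\in\textup{TL}_0$ is a noncontractible loop in $A$.
Inserting the defect arc through it produces a single crossing where the arc passes under the
loop; applying \eqref{kauffman} splits this into two terms, one in which the loop is joined to
the defect arc so that it winds once around the hole ($\rho$) and one where it winds the other
way ($\rho^{-1}$), with coefficients $t^{1/4}$ and $t^{-1/4}$ respectively, giving
$\mathcal{I}_0(X)=t^{1/4}\rho+t^{-1/4}\rho^{-1}$. Part~\textbf{b} is similar but now
$\rho\in\textup{TL}_1$ is already a defect arc (the Dehn twist generator~\eqref{Dehntwistrho}),
and inserting a second defect arc underneath it creates one crossing; resolving it with
\eqref{kauffman} yields one term where the two arcs are nested (producing the factor
$e_1$, the cup-cap on the two outer points, composed with $\rho$ in the appropriate order) and
one term where they remain parallel (producing $\rho$ times the identity), with coefficients
$t^{-1/4}$ and $t^{1/4}$; after checking which side the $e_1$ lands on via the conventions of
Section~\ref{section3} one gets $\mathcal{I}_1(\rho)=\rho(t^{-1/4}e_1+t^{1/4})$, and
$\mathcal{I}_1(\rho^{-1})$ follows either by the same argument with reversed winding or by
noting $\mathcal{I}_1$ is an algebra map so $\mathcal{I}_1(\rho^{-1})=\mathcal{I}_1(\rho)^{-1}$
and simplifying using $e_1^2=-(t^{1/2}+t^{-1/2})e_1$.

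For part~\textbf{c}: the $e_i$ with $i\le n-1$ are cup-cap diagrams localized away from the
marked point~$n$ on the inner boundary and the new marked point $n+1$, so the inserted defect
arc can be isotoped to avoid them entirely and $\mathcal{I}_n(e_i)=e_i$. The generator $e_n$
(which involves a cup-cap straddling the cut, between positions $n$ and $1$) is the one the new
arc genuinely interacts with: inserting the undercrossing arc creates two crossings, and
resolving both with \eqref{kauffman} gives four terms; organizing them shows the answer
factorizes as $(t^{1/4}e_n+t^{-1/4})e_{n+1}(t^{-1/4}e_n+t^{1/4})$, which I would verify by
expanding this product using the $\textup{TL}_{n+1}$ relations $e_ne_{n+1}e_n=e_n$,
$e_n^2=-(t^{1/2}+t^{-1/2})e_n$ and matching coefficients with the four skein resolutions.
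Finally $\mathcal{I}_n(\rho)$ and $\mathcal{I}_n(\rho^{-1})$ are computed exactly as in
part~\textbf{b}, since $\rho$ in $\textup{TL}_n$ ($n\ge2$) also looks locally like a bunch of
parallel strands with one Dehn-twist-type arc near the cut, so the new defect arc meets it in a
single crossing resolved as $\rho(t^{-1/4}e_n+t^{1/4})$, and correspondingly for $\rho^{-1}$.

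The main obstacle is bookkeeping rather than conceptual: one must be scrupulous about the exact
placement and winding of the inserted arc dictated by the $\ast$-product conventions in
Section~\ref{section3} (the clockwise/counterclockwise rotation of endpoints $D_{\curvearrowright}$,
$D^{\prime}_{\curvearrowleft}$ and the ``fixed straight line segment'' condition), because this
is what pins down whether the $e_n$ or $e_{n+1}$ factor appears on the left or the right and in
which order, and a sign or a transposition error there would invalidate the clean factorized
formulas. I would therefore carry out each case by drawing $D^{ins}$ explicitly, labelling all
marked points modulo~$n$ resp.\ $n+1$, resolving crossings one at a time, and at the end
cross-checking internal consistency by verifying that the formulas respect the defining
relations \eqref{relTL} of $\textup{TL}_n$ under $\mathcal{I}_n$ (e.g.\ that
$\mathcal{I}_n(\rho)\mathcal{I}_n(e_i)=\mathcal{I}_n(e_{i+1})\mathcal{I}_n(\rho)$ and
$\mathcal{I}_n(\rho)\mathcal{I}_n(\rho^{-1})=1$), which also serves as a check that no
noncontractible loops were mishandled.
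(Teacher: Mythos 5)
Your proposal is correct and follows essentially the same route as the paper: the authors likewise prove the proposition by drawing $D\ast\textup{Id}_1$ for each generator, resolving the resulting crossings (one for $X$ and $\rho^{\pm1}$, none for $e_i$ with $i<n$, two for $e_n$) via the Kauffman skein relation, and rewriting the resulting affine diagrams in the generators of $\textup{TL}_{n+1}$; in particular they record exactly your four-term expansion $t^{\frac{1}{2}}e_ne_{n+1}+t^{-\frac{1}{2}}e_{n+1}e_n+e_{n+1}+e_n$ before factorizing it (the only relation needed for that step is $e_ne_{n+1}e_n=e_n$). Your consistency checks and the inverse-based derivation of $\mathcal{I}_1(\rho^{-1})$ are harmless additions but not part of the paper's argument.
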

%%%%%%%%%%%%%%%%%%%%%%%%%%%%%%%%%%%%%%%%%%%
\begin{proof} These are direct computations in the skein module.\\ 
{\it Proof of} {\bf a}: We have $X = \begin{pspicture}[shift=-0.8](-1.5,-1)(1.5,1.2)
	\SpecialCoor
	\definecolor{line}{rgb}{0,0,1}
	\pscircle[fillstyle=ccslope,slopebegin=diskin,slopeend=diskout,linewidth=1.2pt](0,0){1}
	\pscircle[fillstyle=solid, fillcolor=white, linewidth=1.2pt](0,0){0.25}
	\pscircle[linecolor=line,linewidth=1.2pt](0,0){0.7}
	\end{pspicture}$ 
	so
	\[
		\mathcal{I}_0(X) =
	\begin{pspicture}[shift=-0.8](-1.5,-1)(1.5,1)
	\SpecialCoor
	\definecolor{line}{rgb}{0,0,1}
	\pscircle[fillstyle=ccslope,slopebegin=diskin,slopeend=diskout,linewidth=1.2pt](0,0){1}
	\pscircle[fillstyle=solid, fillcolor=white, linewidth=1.2pt](0,0){0.25}
	\pscircle[linecolor=line,linewidth=1.2pt](0,0){0.7}
	\psline[linecolor=line,linewidth=1.2pt](1;0)(0.8;0)
	\psline[linecolor=line,linewidth=1.2pt](0.25;0)(0.6;0)	
	\rput{0}(1.2;0){\tiny $1$}
	\end{pspicture}
	=
	t^{\frac{1}{4}}
	\begin{pspicture}[shift=-0.8](-1.5,-1)(1.5,1)
	\SpecialCoor
	\definecolor{line}{rgb}{0,0,1}
	\pscircle[fillstyle=ccslope,slopebegin=diskin,slopeend=diskout,linewidth=1.2pt](0,0){1}
	\pscircle[fillstyle=solid, fillcolor=white, linewidth=1.2pt](0,0){0.25}
	\degrees[8]
	\pscurve[linecolor=line,linewidth=1.2pt](0.25;0)(0.35;0)(0.5;1)(0.6;2)(0.6;3)(0.6;4)(0.6;5)(0.6;6)(0.6;7)(0.8;8)(1;8)
	\rput{0}(1.2;0){\tiny $1$}
\end{pspicture}
+ t^{-\frac{1}{4}}
\begin{pspicture}[shift=-0.8](-1.5,-1)(1.5,1)
	\SpecialCoor
	\definecolor{line}{rgb}{0,0,1}
	\pscircle[fillstyle=ccslope,slopebegin=diskin,slopeend=diskout,linewidth=1.2pt](0,0){1}
	\pscircle[fillstyle=solid, fillcolor=white, linewidth=1.2pt](0,0){0.25}
	\degrees[8]
	\pscurve[linecolor=line,linewidth=1.2pt](0.25;8)(0.35;8)(0.5;7)(0.6;6)(0.6;5)(0.6;4)(0.6;3)(0.6;2)(0.6;1)(0.8;0)(1;0)
	\rput{0}(1.2;0){\tiny $1$}
\end{pspicture}
= t^{\frac{1}{4}} \rho + t^{-\frac{1}{4}}\rho^{-1}
	\]
by the Kauffman skein relation \eqref{kauffman}.\\
{\it Proof of} {\bf b}: We have 
$\rho = \begin{pspicture}[shift=-0.8](-1.5,-1)(1.5,1.2)
	\SpecialCoor
	\definecolor{line}{rgb}{0,0,1}
	\pscircle[fillstyle=ccslope,slopebegin=diskin,slopeend=diskout,linewidth=1.2pt](0,0){1}
	\pscircle[fillstyle=solid, fillcolor=white, linewidth=1.2pt](0,0){0.25}
	\degrees[8]
	\pscurve[linecolor=line,linewidth=1.2pt](0.25;0)(0.35;0)(0.5;1)(0.6;2)(0.6;3)(0.6;4)(0.6;5)(0.6;6)(0.6;7)(0.8;8)(1;8)
	\rput{0}(1.2;0){\tiny $1$}
\end{pspicture}$
	so 
\[
		\mathcal{I}_1(\rho) = 		
		 \begin{pspicture}[shift=-0.8](-1.5,-1)(1.5,1)
	\SpecialCoor
	\definecolor{line}{rgb}{0,0,1}
	\pscircle[fillstyle=ccslope,slopebegin=diskin,slopeend=diskout,linewidth=1.2pt](0,0){1}
	\pscircle[fillstyle=solid, fillcolor=white, linewidth=1.2pt](0,0){0.25}
	\degrees[8]
	\pscurve[linecolor=line,linewidth=1.2pt](0.25;0)(0.35;0)(0.5;1)(0.6;2)(0.6;3)(0.6;4)(0.6;5)(0.6;6)(0.6;7)(0.8;8)(1;8)
	\psline[linecolor=line,linewidth=1.2pt](1;4)(0.7;4)
			\psline[linecolor=line,linewidth=1.2pt](0.25;4)(0.5;4)
	\rput{0}(1.2;4){\tiny $2$}
	\rput{0}(1.2;0){\tiny $1$}
\end{pspicture}
			= \rho(t^{-\frac{1}{4}}e_1+t^{\frac{1}{4}})		
\]
by applying the Kauffman skein relation \eqref{kauffman} to the crossing and rewriting the resulting
expressions in terms of the generators of $\textup{TL}_2$ (compare with the proof of
Theorem \ref{presentationThm}{\bf c}). In a similar way one proves the explicit formula for
$\mathcal{I}_1(\rho^{-1})\in\textup{TL}_2$.\\
{\it Proof of} {\bf c}: The formulas for $\mathcal{I}_n(\rho^{\pm 1})\in\textup{TL}_{n+1}$
are obtained by a similar computation as in {\bf b}.

For $1\leq i<n$, applying the arc insertion functor to $e_i\in\textup{TL}_n$ does not introduce crossings. The resulting $(n+1,n+1)$-affine diagram represents the generator
$e_i$ in $\textup{TL}_{n+1}$, so $\mathcal{I}_n(e_i)=e_{i}$. 

Note that applying the arc insertion functor to $e_n\in\textup{TL}_n$
introduces two crossings. Resolving both crossings with the Kauffman skein relation \eqref{kauffman} and 
expressing the resulting linear combination of four $(n+1,n+1)$-affine diagrams in terms of
the generators of $\textup{TL}_{n+1}$ yield the formula
\[
\mathcal{I}_n(e_n)=t^{\frac{1}{2}}e_ne_{n+1}+t^{-\frac{1}{2}}e_{n+1}e_n+e_{n+1}+e_n=
(t^{\frac{1}{4}}e_n+t^{-\frac{1}{4}})e_{n+1}(t^{-\frac{1}{4}}e_n+t^{\frac{1}{4}}).
\]
\end{proof}
%%%%%%%%%%%%%%%%%%%%%%%%%%%%%%%%%%%%%%%%%

\begin{remark}
The calculation for part \textbf{a} in the proposition above has also been done in \cite[Prop. 2.2]{Le} where a similar skein algebra on the annulus is used to prove centrality of certain skeins.
\end{remark}

%%%%%%%%%%%%%%%%%%%%%%%%%%%%%%%%%%%%%%%%
\section{Towers of extended affine Temperley-Lieb algebra modules}\label{section7}
%%%%%%%%%%%%%%%%%%%%%%%%%%%%%%%%%%%%%%%
In \cite{AlH} the sequence $\{\mathcal{I}_n\}_{n\in\mathbb{Z}_{\geq 0}}$
of algebra maps $\mathcal{I}_n: \textup{TL}_n\rightarrow\textup{TL}_{n+1}$
was used to study affine Markov traces. In \cite{GS} it was used to study fusion
of affine Temperley-Lieb modules.
In the next two sections we use the sequence $\{\mathcal{I}_n\}_{n\in\mathbb{Z}_{\geq 0}}$ 
of algebra maps to introduce the notion
of towers of extended affine Temperley-Lieb modules. We construct examples
that are relevant for understanding the dependence of dense loop models and Heisenberg
XXZ spin-$\frac{1}{2}$ chains on their system size  (cf. \cite{KP, FZJZ,ANS}).

We first introduce some notations. Let $A$ be a $\mathbb{C}$-algebra. Write $\mathcal{C}_A$
for the category of left $A$-modules. Write $\textup{Hom}_A(M,N)$ for the space of morphisms
$M\rightarrow N$ in $\mathcal{C}_A$, which we will call intertwiners.
Suppose that $\eta: A\rightarrow B$ is a (unit preserving) morphism of $\mathbb{C}$-algebras. Write
$\textup{Ind}^{\eta}: \mathcal{C}_A\rightarrow\mathcal{C}_B$ and $\textup{Res}^{\eta}: \mathcal{C}_B\rightarrow\mathcal{C}_A$ for the corresponding induction and restriction functor. Concretely,
if $M$ is a left $A$-module then
\[
\textup{Ind}^{\eta}(M):=B\otimes_AM
\]
with $B$ viewed as right $A$-module by $b\cdot a:=b\eta(a)$ for $b\in B$ and $a\in A$.
If $N$ is a left $B$-module then $\textup{Res}^{\eta}(N)$ is the complex vector space $N$ viewed
as $A$-module by $a\cdot n:=\eta(a)n$ for $a\in A$ and $n\in N$. The restriction functor $\textup{Res}^{\eta}$ is right adjoint to $\textup{Ind}^{\eta}$. If $M$ is a left $A$-module and
$N$ a left $B$-module, then the corresponding linear isomorphism
\[
\textup{Hom}_A\bigl(M,\textup{Res}^\eta(N)\bigr)\overset{\sim}{\longrightarrow}
\textup{Hom}_B\bigl(\textup{Ind}^\eta(M),N\bigr)
\]
is $\phi\mapsto\widehat{\phi}$ with 
$\widehat{\phi}\in\textup{Hom}_B\bigl(\textup{Ind}^\eta(M),N\bigr)$ defined by
\[
\widehat{\phi}\bigl(Z\otimes_Am\bigr):=Z\phi(m)
\]
for $Z\in B$ and $m\in M$.

%%%%%%%%%%%%%%%%%%%%%%%%%%%%%%%%%%%%%%%%%%

For a left $\textup{TL}_{n+1}$-module $V_{n+1}$ we use the shorthand notation 
$V_{n+1}^{\mathcal{I}}$ for the left $\textup{TL}_n$-module $\textup{Res}^{\mathcal{I}_n}(V_{n+1})$.

%%%%%%%%%%%%%%%%%%%%%%%%%%%%%%%%%%%%%%%%
\begin{definition}
We call 
\[
V_0\overset{\phi_0}{\longrightarrow}V_1\overset{\phi_1}{\longrightarrow}V_2
\overset{\phi_2}{\longrightarrow}V_3\overset{\phi_3}{\longrightarrow}\cdots
\]
with $V_n$ a left $\textup{TL}_n$-module
and $\phi_n\in\textup{Hom}_{\textup{TL}_n}\bigl(V_n,V_{n+1}^{\mathcal{I}}\bigr)$ 
a tower of extended affine Temperley-Lieb algebra modules. We will sometimes denote
the tower by $\{(V_n,\phi_n)\}_{n\in\mathbb{Z}_{\geq 0}}$.
\end{definition}
%%%%%%%%%%%%%%%%%%%%%%%%%%%%%%%%%%%%%%%%
%%%%%%%%%%%%%%%%%%%%%%%%%%%%%%%%%%%%%%%%%
\begin{example}
The interpretation of the extended affine Temperley-Lieb algebras as the endomorphism
spaces of the skein category $\cS$ immediately produces examples of
towers of extended affine Temperley-Lieb algebra modules. 
For example, for $m\in\mathbb{Z}_{\geq 0}$ we have the tower
$\{(V_n^{(m)},\phi_n^{(m)})\}_{n\in\mathbb{Z}_{\geq 0}}$ with
\[
V_n^{(m)}:=\textup{Hom}_{\cS}(m+n,n)
\]
viewed as a left module over $\textup{TL}_n\simeq\textup{End}_{\cS}(n)$ with representation map
\[
\pi_n^{(m)}(Y)Z:=Y\circ Z
\]
for $Y\in\textup{End}_{\cS}(n)$ and $Z\in V_n^{(m)}=\textup{Hom}_{\cS}(m+n,n)$, and with intertwiners 
\[
\phi_n^{(m)}:=\mathcal{I}\vert_{\textup{Hom}_{\cS}(m+n,n)}: V_n^{(m)}\rightarrow V_{n+1}^{(m)}.
\]
There are other intertwiners $V_n^{(m)}\rightarrow V_{n+1}^{(m)}$ one can take here; for instance, $Z\mapsto \mathcal{I}(Z)\circ R$ for some $R\in\textup{End}_{\cS}(m+n+1)$.
 A refinement of this example will play an important role in the construction of the link pattern tower
 in the next section.
\end{example}
%%%%%%%%%%%%%%%%%%%%%%%%%%%%%%%%%%%%%%%%%%%%
In the definition of towers $\{(V_n,\phi_n)\}_{n\in\mathbb{Z}_{\geq 0}}$ of extended affine Temperley-Lieb algebra modules we do not require conditions on the intertwiners $\phi_n$,
in particular allowing trivial intertwiners. The interesting towers are the nondegenerate ones, which
are defined as follows.

%%%%%%%%%%%%%%%%%%%%%%%%%%%%%%%%
\begin{definition}
We say that the tower $\{(V_n,\phi_n)\}_{n\in\mathbb{Z}_{\geq 0}}$ of extended affine Temperley-Lieb algebra modules is nondegenerate if $\widehat{\phi}_n: \textup{Ind}^{\mathcal{I}_n}(V_n)\rightarrow V_{n+1}$ is surjective for all $n\in\mathbb{Z}_{\geq 0}$.
\end{definition} 
%%%%%%%%%%%%%%%%%%%%%%%%%%%%%%%
In particular, for a nondegenerate tower $\{(V_n,\phi_n)\}_{n\in\mathbb{Z}_{\geq 0}}$ of extended affine Temperley-Lieb algebra modules, the module $V_{n+1}$ is a quotient
of $\textup{Ind}^{\mathcal{I}_n}(V_n)$,
\[
V_{n+1}\simeq\textup{coim}\bigl(\widehat{\phi}_n\bigr).
\]

We give an important example of a nondegenerate tower of extended affine Temperley-Lieb algebra
modules in the next section.

%%%%%%%%%%%%%%%%%%%%%%%%%%%%%%%%%%%%%%%%%%%%
\section{The link pattern tower}\label{section8}
%%%%%%%%%%%%%%%%%%%%%%%%%%%%%%%%%%%%%%%%%%%%%
Motivated  by applications to
integrable models in statistical physics \cite{KP,FZJZ,ANS}, in particular to the dense loop model and the Heisenberg XXZ spin$-\frac{1}{2}$ chain, we construct in this section a family of towers of extended affine Temperley-Lieb algebra modules acting on spaces of link patterns on the punctured disc. We use the skein categorical context to
build the tower. 

The composition in the skein category $\mathcal{S}$ turns the hom-space 
$\textup{Hom}_{\mathcal{S}}(m,n)$ into a $\textup{End}_{\mathcal{S}}(n)$-$\textup{End}_{\mathcal{S}}(m)$-bimodule. We regard this as a $\textup{TL}_n$-$\textup{TL}_m$-bimodule structure on
$\textup{Hom}_{\mathcal{S}}(m,n)$ using
the isomorphism $\textup{End}_{\mathcal{S}}(n)\simeq \textup{TL}_n$ from Remark
\ref{isoRem}.
Note that for a left $\textup{TL}_m$-module $W_m$, 
\[
\textup{Hom}_{\mathcal{S}}(m,n)\otimes_{\textup{TL}_m}W_m
\]
is naturally a left $\textup{TL}_n$-module.

For $n=2k$ with $k\in\mathbb{Z}_{\geq 0}$
and $u\in\mathbb{C}$ we define the left $\textup{TL}_{2k}$-module $V_{2k}(u)$ by
\[
V_{2k}(u):=\textup{Hom}_{\mathcal{S}}(0,2k)\otimes_{\textup{TL}_0}\mathbb{C}_0^{(u)},
\]
with $\mathbb{C}_0^{(u)}$ the one-dimensional module over $\textup{TL}_0=\mathbb{C}[X]$
satisfying $X\mapsto u$. 
For $Y\in\textup{Hom}_{\mathcal{S}}(0,2k)$ we write $Y_u$ for the element $Y\otimes_{\textup{TL}_0}1$ in $V_{2k}(u)$.

For $n=2k+1$ with $k\in\mathbb{Z}_{\geq 0}$ 
and $v\in\mathbb{C}^\ast$ we define the left $\textup{TL}_{2k+1}$-module $V_{2k+1}(v)$ by
\[
V_{2k+1}(v):=\textup{Hom}_{\mathcal{S}}(1,2k+1)\otimes_{\textup{TL}_1}\mathbb{C}_1^{(v)},
\]
with $\mathbb{C}_1^{(v)}$ the one-dimensional module over
$\textup{TL}_1=\mathbb{C}[\rho^{\pm 1}]$ satisfying $\rho\mapsto v$. 
For $Z\in\textup{Hom}_{\mathcal{S}}(1,2k+1)$ we write $Z_v$ for the element $Z\otimes_{\textup{TL}_1}1$ in $V_{2k+1}(v)$.

%%%%%%%%%%%%%%%%%%%%%%%%%%%%%%%%%%%%%%%%
\begin{remark}\label{standardremark}
The left $\textup{TL}_{2k}$-module $V_{2k}(u)$ and the left $\textup{TL}_{2k+1}$-module
$V_{2k+1}(v)$ are examples of the so-called standard $\textup{TL}_N$-modules $\mathcal{W}_{j,z}[N]$ from \cite[\S 4.2]{GS} (the extended affine Temperley-Lieb algebra $\textup{TL}_N$ is denoted by
$\textup{TL}_N^a$ in \cite{GS}). Concretely, writing $u=x+x^{-1}$ with $x\in\mathbb{C}^\ast$, we have
\begin{equation*}
V_{2k}(u)=\mathcal{W}_{0,x}[2k],\qquad
V_{2k+1}(v)=\mathcal{W}_{\frac{1}{2},v}[2k+1].
\end{equation*}
\end{remark}
%%%%%%%%%%%%%%%%%%%%%%%%%%%%%%%%%%%%%%%%

Next we study towers having the modules $V_{2k}(u)$ and $V_{2k+1}(v)$ as building blocks.
For this we need special elements in the skein modules $\textup{End}_{\mathcal{S}}(0)$,
$\textup{End}_{\mathcal{S}}(1)$ and $\textup{Hom}_{\mathcal{S}}(0,2)$. Let $\emptyset\in
\textup{End}_{\mathcal{S}}(0)$ be the skein class of the empty tangle diagram in $A$
and write $\mathbf{1}:=\mathbf{1}_1$ for the identity morphism in $\textup{End}_{\mathcal{S}}(1)$. Then $V_0(u)=\mathbb{C}\emptyset_u$ and 
$V_1(v)=\mathbb{C}\mathbf{1}_v$. For $V_2(u)$, note that the skein module $\textup{Hom}_{\mathcal{S}}(0,2)$ is a free right 
$\textup{TL}_0=\mathbb{C}[X]$-module with 
$\textup{TL}_0$-basis $\{[c_+], [c_-]\}$, where
\[
c_+ = 
 \begin{pspicture}[shift=-1.1](-1.5,-1.25)(1.5,1.25)
	\SpecialCoor
	\pscircle[fillstyle=ccslope,slopebegin=diskin,slopeend=diskout,linewidth=1.2pt](0,0){1}
	\pscircle[fillstyle=solid, fillcolor=white, linewidth=1.2pt](0,0){0.25}
	\degrees[8]
	\pscurve[linecolor=line,linewidth=1.2pt](1;0)(0.9;0)(0.7;1.5)(0.7;2.5)(0.9;4)(1;4)
	\rput{0}(1.2;0){\tiny $1$}
	\rput{0}(1.2;4){\tiny $2$}
\end{pspicture},
\qquad
c_- =
 \begin{pspicture}[shift=-1.1](-1.5,-1.25)(1.5,1.25)
	\SpecialCoor
	\pscircle[fillstyle=ccslope,slopebegin=diskin,slopeend=diskout,linewidth=1.2pt](0,0){1}
	\pscircle[fillstyle=solid, fillcolor=white, linewidth=1.2pt](0,0){0.25}
	\degrees[8]
	\pscurve[linecolor=line,linewidth=1.2pt](1;0)(0.9;0)(0.7;6.5)(0.7;5.5)(0.9;4)(1;4)
	\rput{0}(1.2;0){\tiny $1$}
	\rput{0}(1.2;4){\tiny $2$}
\end{pspicture}.
\]
In particular, $V_2(u)$ is two-dimensional with linear basis $\{(c_+)_u,(c_-)_u\}$.
Write $U:=t^{\frac{1}{4}}[c_+]+v[c_-]\in\textup{Hom}_{\cS}(0,2)$. In pictures,
\[
U = t^{\frac{1}{4}}
 \begin{pspicture}[shift=-1.1](-1.5,-1.25)(1.5,1.25)
	\SpecialCoor
	\pscircle[fillstyle=ccslope,slopebegin=diskin,slopeend=diskout,linewidth=1.2pt](0,0){1}
	\pscircle[fillstyle=solid, fillcolor=white, linewidth=1.2pt](0,0){0.25}
	\degrees[8]
	\pscurve[linecolor=line,linewidth=1.2pt](1;0)(0.9;0)(0.7;1.5)(0.7;2.5)(0.9;4)(1;4)
	\rput{0}(1.2;0){\tiny $1$}
	\rput{0}(1.2;4){\tiny $2$}
\end{pspicture}
+ v
 \begin{pspicture}[shift=-1.1](-1.5,-1.25)(1.5,1.25)
	\SpecialCoor
	\pscircle[fillstyle=ccslope,slopebegin=diskin,slopeend=diskout,linewidth=1.2pt](0,0){1}
	\pscircle[fillstyle=solid, fillcolor=white, linewidth=1.2pt](0,0){0.25}
	\degrees[8]
	\pscurve[linecolor=line,linewidth=1.2pt](1;0)(0.9;0)(0.7;6.5)(0.7;5.5)(0.9;4)(1;4)
	\rput{0}(1.2;0){\tiny $1$}
	\rput{0}(1.2;4){\tiny $2$}
\end{pspicture}.
\]
%%%%%%%%%%%%%%%%%
\begin{lemma}\label{Step12} Let $u\in\mathbb{C}$ and $v\in\mathbb{C}^\ast$.
\begin{enumerate}
\item[{\bf (i)}] Define the linear map $\phi_0: V_0(u)\rightarrow V_1(v)$ by $\phi_0(\emptyset_u):=\mathbf{1}_v$.
Then 
\begin{equation*}
\textup{Hom}_{\textup{TL}_0}\bigl(V_0(u),V_1(v)^{\mathcal{I}}\bigr)=
\begin{cases}
\mathbb{C}\phi_0\quad &\hbox{ if } u=t^{\frac{1}{4}}v+t^{-\frac{1}{4}}v^{-1},\\
\{0\}\quad &\hbox{ otherwise.}
\end{cases}
\end{equation*}
\item[{\bf (ii)}] 
Let $u=t^{\frac{1}{4}}v+t^{-\frac{1}{4}}v^{-1}$. Define the linear map $\phi_1: V_1(v)\rightarrow V_2(u)$ by $\phi_1(\mathbf{1}_v):=U_u$. Then 
\[
\textup{Hom}_{\textup{TL}_1}\bigl(
V_1(v),V_2(u)^{\mathcal{I}}\bigr)=\mathbb{C}\phi_1.
\]
\end{enumerate}
\end{lemma}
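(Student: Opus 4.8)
The plan is to exploit that the source modules $V_0(u)=\mathbb{C}\emptyset_u$ and $V_1(v)=\mathbb{C}\mathbf{1}_v$ are one-dimensional, and that $\textup{TL}_0=\mathbb{C}[X]$ and $\textup{TL}_1=\mathbb{C}[\rho^{\pm 1}]$ are generated as algebras by $X$ and by $\rho,\rho^{-1}$ respectively. Hence an intertwiner out of $V_0(u)$ (resp.\ $V_1(v)$) is determined by the single vector $\phi(\emptyset_u)$ (resp.\ $\phi(\mathbf{1}_v)$) in the target, and $\textup{TL}_0$-linearity (resp.\ $\textup{TL}_1$-linearity) amounts to requiring that this vector be an eigenvector, with the correct eigenvalue, of the operator by which $X$ (resp.\ $\rho$) acts on the restricted target module; the condition on $\rho^{-1}$ is then automatic since $\mathcal{I}_1(\rho)$ is invertible and $v\neq 0$. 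For {\bf (i)} this finishes the argument immediately: by Proposition \ref{insertionProp}{\bf a} the operator $\mathcal{I}_0(X)=t^{\frac14}\rho+t^{-\frac14}\rho^{-1}$ acts on $V_1(v)=\mathbb{C}\mathbf{1}_v$ by the scalar $t^{\frac14}v+t^{-\frac14}v^{-1}$, so the one-dimensional space of linear maps $\mathbb{C}\emptyset_u\to\mathbb{C}\mathbf{1}_v$ (spanned by $\phi_0$) consists of $\textup{TL}_0$-intertwiners exactly when $u=t^{\frac14}v+t^{-\frac14}v^{-1}$, and of the zero map otherwise.

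For {\bf (ii)} I would identify $\textup{Hom}_{\textup{TL}_1}\!\bigl(V_1(v),V_2(u)^{\mathcal{I}}\bigr)$, via $\phi\mapsto\phi(\mathbf{1}_v)$, with the $v$-eigenspace $\ker\!\bigl(\mathcal{I}_1(\rho)\big|_{V_2(u)}-v\bigr)$, and then compute that eigenspace. The substantive point is to work out, in the skein model $\textup{TL}_2\simeq\textup{End}_{\mathcal{S}}(2)$, how the operator $\mathcal{I}_1(\rho)=\rho\,(t^{-\frac14}e_1+t^{\frac14})$ of Proposition \ref{insertionProp}{\bf b} acts on the basis $\{(c_+)_u,(c_-)_u\}$ of $V_2(u)$. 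Composing diagrams and reducing with the skein relations, I expect the following: gluing $e_1$ onto $c_+$ produces a loop contractible in $A$, whence $e_1\cdot(c_+)_u=-(t^{\frac12}+t^{-\frac12})(c_+)_u$; gluing $e_1$ onto $c_-$ produces a noncontractible loop, whence $e_1\cdot(c_-)_u=u\,(c_+)_u$ (using $X\mapsto u$); and $\rho$ interchanges $(c_+)_u$ and $(c_-)_u$ without creating any loop or extra winding. Assembling these yields
\[
\mathcal{I}_1(\rho)\big|_{V_2(u)}=\begin{pmatrix}0 & t^{\frac14}\\ -\,t^{-\frac34} & t^{-\frac14}u\end{pmatrix}
\]
in the ordered basis $\bigl((c_+)_u,(c_-)_u\bigr)$.

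It then remains to read off the eigenspace. A direct substitution, using $u=t^{\frac14}v+t^{-\frac14}v^{-1}$ (so that $t^{-\frac14}uv=v^2+t^{-\frac12}$), shows $\mathcal{I}_1(\rho)U_u=vU_u$ for $U_u=t^{\frac14}(c_+)_u+v(c_-)_u$, giving $\mathbb{C}\phi_1\subseteq\textup{Hom}_{\textup{TL}_1}\!\bigl(V_1(v),V_2(u)^{\mathcal{I}}\bigr)$; equivalently the characteristic polynomial of the matrix above factors as $(\lambda-v)\bigl(\lambda-t^{-\frac12}v^{-1}\bigr)$. For the reverse inclusion it is enough that the $v$-eigenspace be one-dimensional, and this holds because the matrix minus $v\,\mathrm{Id}$ retains the entry $t^{\frac14}\neq 0$, so it is nonzero and of rank at least $1$, forcing its kernel to have dimension at most $1$; since $U_u\neq0$ we conclude $\ker\!\bigl(\mathcal{I}_1(\rho)\big|_{V_2(u)}-v\bigr)=\mathbb{C}U_u$, that is, $\textup{Hom}_{\textup{TL}_1}\!\bigl(V_1(v),V_2(u)^{\mathcal{I}}\bigr)=\mathbb{C}\phi_1$.

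The hard part will be the diagrammatic computation feeding into the matrix: one has to keep track of winding numbers carefully enough to be sure that composing $\rho$ with each cap $c_\pm$ returns the other cap with no spurious winding or loop, and to decide in each composition $e_1\circ c_\pm$ whether the loop formed by concatenating the cap with the inner arc of $e_1$ is contractible in $A$ (it is for $c_+$, whose arc and the inner arc of $e_1$ run on the same side of the hole, and it is not for $c_-$, whose arc together with that inner arc encircles the hole). Once those skein-module identities are pinned down, the remainder is two-dimensional linear algebra.
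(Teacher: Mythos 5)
Your proposal is correct and follows essentially the same route as the paper: both reduce the intertwiner condition to an eigenvector problem for $\mathcal{I}_0(X)$ resp.\ $\mathcal{I}_1(\rho)$ on the one-dimensional image of the generator, compute the action of $\rho(t^{-\frac14}e_1+t^{\frac14})$ on the basis $\{(c_+)_u,(c_-)_u\}$ by the same skein reductions (your matrix is $v$ times the paper's $M$), and verify that $U_u$ spans the $v$-eigenspace. The one small difference is that where the paper splits into cases according to whether the second eigenvalue $t^{-\frac12}v^{-1}$ coincides with $v$, your rank-one observation on $\mathcal{I}_1(\rho)-v\,\mathrm{Id}$ (the entry $t^{\frac14}\neq 0$ survives) bounds the geometric multiplicity uniformly, which is marginally cleaner.
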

%%%%%%%%%%%%%%%%
\begin{proof}
{\bf (i)} Note that $\phi_0\in\textup{Hom}_{\textup{TL}_0}\bigl(V_0(u),V_1(v)^{\mathcal{I}}\bigr)$ 
if and only if $\mathcal{I}_0(X)\mathbf{1}_v=u\mathbf{1}_v$. Proposition \ref{insertionProp}{\bf (a)}
gives $\mathcal{I}_0(X)\mathbf{1}_v=(t^{\frac{1}{4}}v+t^{-\frac{1}{4}}v^{-1})\mathbf{1}_v$, hence the result.\\
{\bf (ii)} Take an arbitrary element $Z_u\in V_2(u)$ with $Z\in\textup{Hom}_{\cS}(0,2)$. 
The linear map 
$\chi: V_1(v)\rightarrow V_2(u)$ defined by $\chi(\mathbf{1}_v)=Z_v$ is in 
$\textup{Hom}_{\textup{TL}_1}\bigl(V_1(v),V_2(u)^{\mathcal{I}}\bigr)$ if and only if
\[
\mathcal{I}_1(\rho)Z_u=vZ_u
\]
in $V_2(u)$. By Proposition \ref{insertionProp}{\bf (b)} we have
$\mathcal{I}_1(\rho)=\rho(t^{-\frac{1}{4}}e_1+t^{\frac{1}{4}})$.
A direct computation in $\textup{Hom}_{\cS}(0,2)$ shows that 
\begin{equation}\label{basisaction}
\begin{split}
\rho(t^{-\frac{1}{4}}e_1+t^{\frac{1}{4}})\circ [c_+]&=-t^{-\frac{3}{4}}[c_-],\\
\rho(t^{-\frac{1}{4}}e_1+t^{\frac{1}{4}})\circ [c_-]&=t^{\frac{1}{4}}[c_+]+t^{-\frac{1}{4}}([c_-]\circ X),
\end{split}
\end{equation}
where we have used the loop removal relation \eqref{loopremoval} in the derivation of the first identity. Writing
$m_{\alpha,\beta}:=\alpha (c_+)_u+\beta (c_-)_u\in V_2(u)$ with $\alpha,\beta\in\mathbb{C}$ we
obtain from \eqref{basisaction},
\[
\mathcal{I}_1(\rho)m_{\alpha,\beta}=\rho(t^{-\frac{1}{4}}e_1+t^{\frac{1}{4}})m_{\alpha,\beta}=vm_{\alpha^\prime,\beta^\prime}
\]
with 
\begin{equation*}
\begin{split}
\Bigl(\begin{matrix} \alpha^\prime\\ \beta^\prime\end{matrix}\,\Bigr)=
M\Bigl(\begin{matrix} \alpha\\ \beta\end{matrix}\Bigr),\qquad
M:=
\left(\begin{matrix} 0 & t^{\frac{1}{4}}v^{-1}\\ -t^{-\frac{3}{4}}v^{-1} & 1+t^{-\frac{1}{2}}v^{-2}
\end{matrix}\right).
\end{split}
\end{equation*}
Since $m_{t^{\frac{1}{4}},v}=U_u$ it remains to show that $M$ has eigenvalue $1$ with corresponding eigenspace
$\mathbb{C}\Bigl(\begin{matrix} t^{\frac{1}{4}}\\ v\end{matrix}\Bigr)$. Clearly
$\Bigl(\begin{matrix} t^{\frac{1}{4}}\\ v\end{matrix}\Bigr)$ is an eigenvector of $M$ with eigenvalue $1$.
The characteristic polynomial of $M$ is
\[
p_M(\lambda)=(\lambda-1)(\lambda-t^{-\frac{1}{2}}v^{-2}),
\]
hence the result follows for $t^{-\frac{1}{2}}v^{-2}\not=1$. If $t^{-\frac{1}{2}}v^{-2}=1$ then a direct
check shows that the geometric multiplication of the eigenvalue $1$ of $M$ is still one.
\end{proof}
%%%%%%%%%%%%%%%%%%
Note that the intertwiners $\phi_0$ and $\phi_1$ can alternatively be characterized by the formulas
\begin{equation*}
\begin{split}
\phi_0(Y_u)&=\bigl(\mathcal{I}(Y)\bigr)_v,\qquad\qquad\,\,\, Y\in\textup{End}_{\cS}(0),\\
\phi_1(Z_v)&=\bigl(\mathcal{I}(Z)\circ U\bigr)_u,\qquad\,\,\,\,Z\in\textup{End}_{\cS}(1)
\end{split}
\end{equation*}
since $\mathcal{I}(\emptyset)_v=1_v$ and $(\mathcal{I}(\mathbf{1})\circ U)_u=U_u$.

The following theorem shows that $\phi_0$ and $\phi_1$ can be extended to a nondegenerate
tower
\[
V_0(u)\overset{\phi_0}{\longrightarrow}V_1(v)\overset{\phi_1}{\longrightarrow}
V_2(u)\overset{\phi_2}{\longrightarrow}\cdots
\]
of extended affine Temperley-Lieb modules when $u=t^{\frac{1}{4}}v+t^{-\frac{1}{4}}v^{-1}$.

%%%%%%%%%%%%%%%%%%%%%%%%%%%%%%%%
\begin{theorem}\label{intertwinerstower}
Let $v\in\mathbb{C}^\ast$. Set $u:=t^{\frac{1}{4}}v+t^{-\frac{1}{4}}v^{-1}$ and
 let $k\in\mathbb{Z}_{\geq 0}$. 
 \begin{enumerate}
 \item[{\bf (i)}]  There exist unique intertwiners 
 $\phi_{2k}\in\textup{Hom}_{\textup{TL}_{2k}}\bigl(V_{2k}(u),
V_{2k+1}(v)^{\mathcal{I}}\bigr)$ and\\ 
$\phi_{2k+1}\in\textup{Hom}_{\textup{TL}_{2k+1}}\bigl(V_{2k+1}(v),V_{2k+2}(u)^{\mathcal{I}}\bigr)$
satisfying
\begin{equation*}
\begin{split}
\phi_{2k}(Y_u)&:=\bigl(\mathcal{I}(Y)\bigr)_v,\qquad\qquad\,\,\, Y\in\textup{Hom}_{\cS}(0,2k),\\
\phi_{2k+1}(Z_v)&:=\bigl(\mathcal{I}(Z)\circ U\bigr)_u,\qquad\,\,\,\, Z\in\textup{Hom}_{\cS}(1,2k+1).
\end{split}
\end{equation*}
\item[{\bf (ii)}] The tower 
\[
V_0(u)\overset{\phi_0}{\longrightarrow}V_1(v)\overset{\phi_1}{\longrightarrow}
V_2(u)\overset{\phi_2}{\longrightarrow}V_3(v)\overset{\phi_3}{\longrightarrow}\cdots
\]
of extended affine Temperley-Lieb algebra modules is nondegenerate if 
$v^2\not=t^{\frac{1}{2}}$.
\end{enumerate}
\end{theorem}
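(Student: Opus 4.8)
The plan is to deduce (i) from direct computations in the skein modules, bootstrapping the $k=0$ case treated in Lemma~\ref{Step12}, and to deduce (ii) from cyclicity of the standard modules $V_n$ together with one skein computation that isolates the role of the hypothesis $v^2\neq t^{\frac12}$. For (i), note first that $V_{2k}(u)$ is spanned by the elements $Y_u$ ($Y\in\textup{Hom}_{\cS}(0,2k)$) and $V_{2k+1}(v)$ by the elements $Z_v$ ($Z\in\textup{Hom}_{\cS}(1,2k+1)$), so uniqueness is automatic and the content is well-definedness on the relative tensor products together with $\textup{TL}_n$-equivariance. Equivariance is formal: writing $\mathcal{I}(W)\circ\mathcal{I}(D)=\mathcal{I}(W\circ D)$ (functoriality of $\mathcal{I}$, whose restriction to endomorphisms is $\mathcal{I}_n$) shows that both $\phi_{2k}$ and $\phi_{2k+1}$ intertwine the actions. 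Well-definedness of $\phi_{2k}$ amounts to $\bigl(\mathcal{I}(Y\circ X)\bigr)_v=u\bigl(\mathcal{I}(Y)\bigr)_v$, which follows from $\mathcal{I}_0(X)=t^{\frac14}\rho+t^{-\frac14}\rho^{-1}$ (Proposition~\ref{insertionProp}\,\textbf{(a)}) together with $\rho\mapsto v$ on $\mathbb{C}_1^{(v)}$. Well-definedness of $\phi_{2k+1}$ amounts to $\bigl(\mathcal{I}(Z\circ\rho)\circ U\bigr)_u=v\bigl(\mathcal{I}(Z)\circ U\bigr)_u$; substituting $\mathcal{I}_1(\rho)=\rho(t^{-\frac14}e_1+t^{\frac14})$ (Proposition~\ref{insertionProp}\,\textbf{(b)}) this reduces, via the computation \eqref{basisaction}, to the statement that $\rho(t^{-\frac14}e_1+t^{\frac14})\circ U$ equals $vU$ after the substitution $X\mapsto u$, and this is exactly where $u=t^{\frac14}v+t^{-\frac14}v^{-1}$ is used: it forces the coefficient $-t^{-\frac12}+vt^{-\frac14}u$ to collapse to $v^2$.

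For the even connecting maps in (ii) I would show $\widehat\phi_{2k}$ is surjective for every $v\in\mathbb{C}^\ast$. Let $Y_k\in\textup{Hom}_{\cS}(0,2k)$ be the affine diagram with the $k$ short arcs joining $2i-1$ to $2i$, and let $\sigma_k\in\textup{Hom}_{\cS}(1,2k+1)$ be the analogous diagram with one straight defect line and $k$ short arcs. Inserting the defect line introduces no crossings here, so $\mathcal{I}(Y_k)=Y_k\ast\textup{Id}_1$ is again a noncrossing diagram with one defect line and $k$ short arcs; acting by a suitable power of $\rho\in\textup{TL}_{2k+1}$ and by the $e_i$'s one reaches $\sigma_k$ from it, whence $(\sigma_k)_v\in\textup{TL}_{2k+1}\cdot\bigl(\mathcal{I}(Y_k)\bigr)_v\subseteq\textup{Im}(\widehat\phi_{2k})$. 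Since the standard module $V_{2k+1}(v)=\mathcal{W}_{\frac12,v}[2k+1]$ is cyclic, generated by $(\sigma_k)_v$ (cf. \cite{GL,Gre,GS}), this gives $\textup{Im}(\widehat\phi_{2k})=V_{2k+1}(v)$.

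For the odd connecting maps, which is where $v^2\neq t^{\frac12}$ enters, I would start from $\sigma_k$ again. A direct skein computation (introducing no crossings) gives
\[
\phi_{2k+1}\bigl((\sigma_k)_v\bigr)=\bigl(\mathcal{I}(\sigma_k)\circ U\bigr)_u=t^{\frac14}(P_+)_u+v\,(P_-)_u,
\]
where $P_+\in\textup{Hom}_{\cS}(0,2k+2)$ is the affine diagram with $k+1$ short arcs, one of them joining the marked points $2k+1$ and $2k+2$, and $P_-$ is obtained from $P_+$ by replacing that last arc with an arc winding once around the hole (for $k=0$ this recovers $U=t^{\frac14}c_++v c_-$). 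Joining arcs and removing the resulting loop by \eqref{loopremoval}, one finds $e_{2k+1}\circ P_+=-(t^{\frac12}+t^{-\frac12})P_+$ and that $e_{2k+1}\circ P_-$ is $P_+$ together with an additional noncontractible loop, so in $V_{2k+2}(u)$
\[
e_{2k+1}\cdot\phi_{2k+1}\bigl((\sigma_k)_v\bigr)=\bigl(-t^{\frac34}-t^{-\frac14}+vu\bigr)(P_+)_u=t^{\frac14}\bigl(v^2-t^{\frac12}\bigr)(P_+)_u.
\]
If $v^2\neq t^{\frac12}$ this forces $(P_+)_u\in\textup{Im}(\widehat\phi_{2k+1})$, hence also $(P_-)_u=v^{-1}\bigl(\phi_{2k+1}((\sigma_k)_v)-t^{\frac14}(P_+)_u\bigr)\in\textup{Im}(\widehat\phi_{2k+1})$; since $V_{2k+2}(u)=\mathcal{W}_{0,x}[2k+2]$ is cyclic, generated by $(P_+)_u$, we conclude $\textup{Im}(\widehat\phi_{2k+1})=V_{2k+2}(u)$. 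Combining the two cases, the tower is nondegenerate whenever $v^2\neq t^{\frac12}$.

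The routine, if slightly tedious, ingredients are the planar-isotopy bookkeeping that pins down $\mathcal{I}(Y_k)$, $\mathcal{I}(\sigma_k)\circ U$ and the diagrams $P_\pm$, the reading off of $e_{2k+1}\circ P_\pm$ from the pictures, and the appeal to cyclicity of the standard modules. The heart of the argument, and the only point where a genericity assumption is forced, is the identity $e_{2k+1}\cdot\phi_{2k+1}((\sigma_k)_v)=t^{\frac14}(v^2-t^{\frac12})(P_+)_u$: at $v^2=t^{\frac12}$ the whole vector $\phi_{2k+1}((\sigma_k)_v)$ is annihilated by $e_{2k+1}$ (and, as one already checks for $k=0$, spans a proper $\textup{TL}_{2k+2}$-submodule), so $v^2=t^{\frac12}$ is precisely the critical locus at which the odd step can fail. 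I expect this odd step to be the main obstacle, both in getting the skein bookkeeping exactly right and in keeping the cyclicity input clean.
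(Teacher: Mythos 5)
Your argument is correct and follows essentially the same strategy as the paper: part (i) is handled identically (spanning sets give uniqueness, functoriality of $\mathcal{I}$ gives equivariance, and well-definedness reduces via Proposition \ref{insertionProp} to the eigenvalue computations of Lemma \ref{Step12}, including the key collapse $-t^{-\frac{1}{2}}+vt^{-\frac{1}{4}}u=v^2$), and part (ii) is proved by exhibiting a cyclic generator whose image under $\widehat{\phi}_n$ is a nonzero multiple of a cyclic generator of the target, with the same critical constant $t^{\frac{1}{4}}v^2-t^{\frac{3}{4}}$ appearing in the odd step.

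The one point of genuine divergence is in part (ii). The paper works with the ``rainbow'' diagrams $C^{(2k)}$, $C^{(2k+1)}$ (connecting $j$ to $2k+1-j$) and applies the word $e_{k+1}\cdots e_{2k}e_{2k+1}$, whereas you work with the ``short-arcs'' diagrams and apply the single generator $e_{2k+1}$; these are interchangeable, and your local computation $e_{2k+1}\cdot\phi_{2k+1}\bigl((\sigma_k)_v\bigr)=t^{\frac{1}{4}}(v^2-t^{\frac{1}{2}})(P_+)_u$ is arguably cleaner and makes the critical locus $v^2=t^{\frac{1}{2}}$ more transparent. What you buy this with is an external citation: you invoke cyclicity of the standard modules $\mathcal{W}_{0,x}[2k]$ and $\mathcal{W}_{\frac{1}{2},v}[2k+1]$ from your chosen generators, while the paper proves the corresponding cyclicity statement from scratch, by embedding the matchmaker representation $M_{2k}$ of the finite Temperley--Lieb algebra into $V_{2k}(u)$ and $V_{2k+1}(v)$, establishing $M_{2k}=\textup{TL}_{2k}^{fin}\cdot L^{(2k)}$ via the Dyck-path description of link patterns, and then using $\rho$ to reach the winding diagrams. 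If you want your argument to be self-contained (and valid at all parameter values, including $t^{\frac{1}{2}}+t^{-\frac{1}{2}}=0$), you should supply this step; it goes through for the short-arcs generator just as for the rainbow, but it is the one ingredient you have not actually checked.
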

%%%%%%%%%%%%%%%%%%%%%%%%%%%%%%%%%
\begin{proof}
{\bf (i)} If the maps $\phi_{2k}$ and $\phi_{2k+1}$ are well-defined, then they are obviously intertwiners. To prove that $\phi_{2k}$ and $\phi_{2k+1}$ are well-defined
we have to show that $\bigl(\mathcal{I}(Y)\circ\mathcal{I}(X)\bigr)_v=
u\mathcal{I}(Y)_v$ in $V_{2k+1}(v)$ for $Y\in\textup{Hom}_{\cS}(0,2k)$ and 
$\bigl(\mathcal{I}(Z)\circ(\mathcal{I}(\rho)\circ U)\bigr)_u=v\bigl(\mathcal{I}(Z)\circ U\bigr)_u$ in
$V_{2k+2}(u)$ for $Z\in\textup{Hom}_{\cS}(1,2k+1)$. This is analogous to the proof of Lemma
\ref{Step12}.\\
{\bf (ii)}  Consider the tangle diagrams
\begin{center}
  {\psset{unit=.8cm}
$C^{(2k)}:=\,\,\,$   \begin{pspicture}[shift=-1.8](-2.25,-1.8)(2.25,1.8)
     	\pscircle[fillstyle=solid, fillcolor=diskin,linewidth=1.2pt](0,0){1.5}
    	\degrees[360]
	\pscircle[fillstyle=solid, fillcolor=white,linewidth=1.2pt](0,0){0.25}
	\rput{0}(1.8;0){\tiny$1$}
	\rput{0}(1.8;-20){\tiny$2k$}
	\rput{0}(1.8;160){\tiny$k$}
	\rput{0}(2;180){\tiny$k\!+\!1$}
	\pscurve[linecolor=line,linewidth=1.2pt](1.5;180)(1.4;180)(1.2;170)(1.4;160)(1.5;160)
	\pscurve[linecolor=line,linewidth=1.2pt](1.5;-20)(1.4;-20)(0.35;280)(0.35;240)(-0.4;-10)(0.35;100)(0.35;60)(1.4;0)(1.5;0)
	\pscurve[linecolor=line,linewidth=1.2pt](1.5;-40)(1.4;-40)(-0.7;-10)(1.4;20)(1.5;20)
	\psarc[linestyle=dotted,linecolor=line,linewidth=1.2pt](0,0){1}{190}{-60}
	\psarc[linestyle=dotted,linecolor=line,linewidth=1.2pt](0,0){1}{40}{150}
   \end{pspicture}    }
\qquad   
  {\psset{unit=.8cm} 
$C^{(2k+1)}:=\,\,\,$ \begin{pspicture}[shift=-1.8](-2.25,-1.8)(2.25,1.8)
     	\pscircle[fillstyle=solid, fillcolor=diskin,linewidth=1.2pt](0,0){1.5}
	\pscircle[fillstyle=solid, fillcolor=white,linewidth=1.2pt](0,0){0.25}
    	\degrees[360]
	\rput{0}(1.8;0){\tiny$1$}
	\rput{0}(2;-20){\tiny$2k\!+\!1$}
	\rput{0}(2;150){\tiny$k$}
	\rput{0}(2;170){\tiny$k\!+\!1$}
	\rput{0}(1.9;-40){\tiny$2k$}
	\pscurve[linecolor=line,linewidth=1.2pt](1.5;170)(1.4;170)(1.2;160)(1.4;150)(1.5;150)
	\pscurve[linecolor=line,linewidth=1.2pt](1.5;-40)(1.4;-40)(-0.6;30)(-0.6;340)(-0.6;290)(1.4;0)(1.5;0)
	\pscurve[linecolor=line,linewidth=1.2pt](1.5;-20)(1.4;-20)(0.5;-30)(0.4;270)(0.4;220)(0.4;180)(0.4;160)(0.4;140)(0.4;120)(0.4;80)(0.37;40)(0.35;0)(0.25;0)
	\psarc[linestyle=dotted,linecolor=line,linewidth=1.2pt](0,0){1}{180}{290}
	\psarc[linestyle=dotted,linecolor=line,linewidth=1.2pt](0,0){1}{30}{140}
   \end{pspicture}   
   }
\end{center}
   
\noindent
respectively. We claim that $V_{2k}(u)=\textup{TL}_{2k}\cdot [C^{(2k)}]_u$ and 
$V_{2k+1}(v)=\textup{TL}_{2k+1}\cdot [C^{(2k+1)}]_v$. 

To prove this we use the matchmaker representation of the 
finite Temperley-Lieb algebra $\textup{TL}_{2k}^{fin}$ (see, e.g., \cite[\S 2.1]{dG}). The finite Temperley-Lieb algebra $\textup{TL}_{2k}^{fin}$ is the subalgebra of $\textup{TL}_{2k}$ generated by $e_1,\ldots,e_{2k-1}$. The representation space 
$M_{2k}$ of the matchmaker representation is the vector space with linear basis the non-crossing perfect matchings of $\{1,\ldots,2k\}$. Such non-crossing perfect matchings are viewed as nonintersecting arcs in a strip with the ordered endpoints $1,\ldots,2k$ positioned on the bottom
line of the strip. We will call such non-crossing perfect matchings link patterns. The $e_j$ acts on link patterns as the matchmaker of $j$ and $j+1$ (see \cite[(1)]{dG}), with the convention that if $j$ and $j+1$ in the link pattern were already matched, then $e_j$ acts by multiplication by the scalar factor $-(t^{\frac{1}{2}}+t^{-\frac{1}{2}})$.

Let $L^{(2k)}\in M_{2k}$
be the link pattern connecting $j$ to $2k+1-j$ for $j=1,\ldots,2k$. By wrapping the link pattern on the annulus
in such a way that $\{1,\ldots,2k\}$ correspond to the marked points $2\xi_{2k}^{j-1}$
($j=1,\ldots,2k$), we get an injective $\textup{TL}_{2k}^{fin}$-module morphism $M_{2k}\hookrightarrow V_{2k}(u)$ mapping $L^{(2k)}$ to $[C^{(2k)}]_u$.
If we in addition insert an arc via $-\ast\textup{Id}_1$ before projecting onto the skein, 
we get an injective $\textup{TL}_{2k}^{fin}$-module morphism
$M_{2k}\hookrightarrow V_{2k+1}(v)$ mapping $L^{(2k)}$ to
$[C^{(2k)}\ast\textup{Id}_1]_v=[C^{(2k+1)}]_v$. 

With these observations and the fact that $\rho\in\textup{TL}_n$ can be used to turn diagrams
in $A$ counterclockwise by an angle of $2\pi/n$, the claim is a consequence of
$M_{2k}=\textup{TL}_{2k}^{fin}\cdot L^{(2k)}$. This in turn is easy to establish using the alternative
description of link patterns in terms of Dyck paths (see, e.g., \cite[\S 2.4]{dGP}).

Now note that
\[
\widehat{\phi}_{2k}\bigl(\mathbf{1}_{2k+1}\otimes_{\textup{TL}_{2k}}[C^{(2k)}]_u\bigr)
=\bigl(\mathcal{I}\bigl([C^{(2k)}]\bigr)\bigr)_v
=[C^{(2k+1)}]_v
\]
hence $\widehat{\phi}_{2k}\in
\textup{Hom}_{\textup{TL}_{2k+1}}\bigl(\textup{Ind}^{\mathcal{I}_{2k}}\bigl(V_{2k}(u)\bigr),V_{2k+1}(v)\bigr)$
is surjective. By a direct computation we have
\begin{equation*}
\begin{split}
\widehat{\phi}_{2k+1}\bigl(e_{k+1}\cdots &e_{2k}e_{2k+1}\otimes_{\textup{TL}_{2k+1}}
[C^{(2k+1)}]_v\bigr)=\\
&=\bigl(e_{k+1}\cdots e_{2k}e_{2k+1}\mathcal{I}\bigl([C^{(2k+1)}]\bigr)U\bigr)_u\\
&=\bigl(t^{\frac{1}{4}}v^2-t^{\frac{3}{4}}\bigr)[C^{(2k+2)}]_u,
\end{split}
\end{equation*}
hence $\widehat{\phi}_{2k+1}\in\textup{Hom}_{\textup{TL}_{2k+2}}\bigl(
\textup{Ind}^{\mathcal{I}_{2k+1}}\bigl(V_{2k+1}(v)\bigr),V_{2k+2}(u)\bigr)$ is surjective
if $v^2\not=t^{\frac{1}{2}}$.
\end{proof}
%%%%%%%%%%%%%%%%%%%%%%%%%%%%%%%%%%%%

\begin{remark}
The two skein classes $[C^{(2k)}]$ and $[C^{(2k+1)}]$
play an important role in determining the normalisation of the ground state of the dense loop model (see \cite{ANS,FZJZ,KP}).

\end{remark}

Fix $v\in\mathbb{C}^\ast$ and set $u=t^{\frac{1}{4}}v+t^{-\frac{1}{4}}v^{-1}$ for the remainder of this section. Note that for $n=2k$ the representation space $V_{2k}(u)$ consists of the equivalence classes of
the skein module $\textup{Hom}_{\cS}(0,2k)$ with respect to the equivalence relation obtained as
the linear and transitive closure of the 
{\it noncontractible loop removal relation}
\begin{equation}\label{nonloopremoval}
\begin{pspicture}[shift=-0.8](-1.5,-1)(1.5,1)
	\SpecialCoor
	\definecolor{line}{rgb}{0,0,1}
	\pscircle[fillstyle=ccslope,slopebegin=diskout,slopeend=diskin,linewidth=0.8pt,linestyle=dotted](0,0){1}
	\pscircle[fillstyle=solid, fillcolor=white, linewidth=1.2pt](0,0){0.25}
	\pscircle[linecolor=line,linewidth=1.2pt](0,0){0.7}
	\end{pspicture}
	= (t^{\frac{1}{4}}v+t^{-\frac{1}{4}}v^{-1})
	\begin{pspicture}[shift=-0.8](-1.5,-1)(1.5,1)
	\SpecialCoor
	\definecolor{line}{rgb}{0,0,1}
	\pscircle[fillstyle=ccslope,slopebegin=diskout,slopeend=diskin,linewidth=0.8pt,linestyle=dotted](0,0){1}
	\pscircle[fillstyle=solid, fillcolor=white, linewidth=1.2pt](0,0){0.25}
	\end{pspicture}
\end{equation}
For $n=2k+1$ odd, the representation space $V_{2k+1}(v)$ consists of the equivalence classes of
the skein module $\textup{Hom}_{\cS}(1,2k+1)$ with respect to the equivalence relation obtained as
the linear and transitive closure of the following {\it Dehn twist removal relation}
\begin{equation}\label{Dehnremoval}
\begin{pspicture}[shift=-0.8](-1.5,-1)(1.5,1)
	\SpecialCoor
	\definecolor{line}{rgb}{0,0,1}
	\pscircle[fillstyle=ccslope,slopebegin=diskin,slopeend=diskout,linewidth=1.2pt,linestyle=dotted](0,0){1}
	\pscircle[fillstyle=solid, fillcolor=white, linewidth=1.2pt](0,0){0.25}
	\degrees[8]
	\pscurve[linecolor=line,linewidth=1.2pt](0.25;0)(0.35;0)(0.5;1)(0.6;2)(0.6;3)(0.6;4)(0.6;5)(0.6;6)(0.6;7)(0.8;8)(1;8)
	\rput{0}(1.2;0){\tiny $1$}
\end{pspicture}
=
v\begin{pspicture}[shift=-0.8](-1.5,-1)(1.5,1)
	\SpecialCoor
	\definecolor{line}{rgb}{0,0,1}
	\pscircle[fillstyle=ccslope,slopebegin=diskin,slopeend=diskout,linewidth=1.2pt,linestyle=dotted](0,0){1}
	\pscircle[fillstyle=solid, fillcolor=white, linewidth=1.2pt](0,0){0.25}
	\degrees[8]
	\psline[linecolor=line,linewidth=1.2pt](1;0)(0.25;0)
	\rput{0}(1.2;0){\tiny $1$}
\end{pspicture}
\end{equation}

Let $\widetilde{\mathcal{C}}_{2k}$ be a set of representatives of the
planar isotopy classes of affine $(0,2k)$-diagrams without noncontractible loops. Let 
$\widetilde{\mathcal{C}}_{2k+1}$
be a set of representatives of the planar isotopy classes of the affine $(1,2k+1)$-diagrams 
that are planar isotopic to $D\ast\textup{Id}_1$ for some affine $(0,2k)$-diagram $D$. We will call the inserted arc connecting the inner boundary of $A$ with the outer boundary of $A$ the {\it defect
line} of the affine $(1,2k+1$)-diagram.
Observe that $\widetilde{\mathcal{B}}_{2k}:=\{[D]_u\,\, | \,\, D\in\widetilde{\mathcal{C}}_{2k}\}$ is a linear basis of 
$V_{2k}(u)$ and $\widetilde{\mathcal{B}}_{2k+1}:=\{[D]_v \,\, | \,\, D\in\widetilde{\mathcal{C}}_{2k+1}\}$ 
is a linear basis of $V_{2k+1}(v)$.

%%%%%%%%%%%%%%%%%%%%%%%%%%%%%%%%%
\begin{definition} Let $u=t^{\frac{1}{4}}v+t^{-\frac{1}{4}}v^{-1}$.
We call the tower 
\[
V_0(u)\overset{\phi_0}{\longrightarrow}V_1(v)\overset{\phi_1}{\longrightarrow}
V_2(u)\overset{\phi_2}{\longrightarrow} V_3(v)\overset{\phi_3}{\longrightarrow}\cdots
\]
of extended affine Temperley-Lieb algebra modules
the {\textup{link pattern tower}}. We call $v\in\mathbb{C}^\ast$ the 
{\textup{twist weight}} and $t^{\frac{1}{4}}v+
t^{-\frac{1}{4}}v^{-1}$ the {\textup{noncontractible loop weight}} of the link pattern tower.
\end{definition}
%%%%%%%%%%%%%%%%%%%%%%%%%%%%%%%%%
Note that the intertwiners $\phi_{2k}$ of the link pattern tower are simply given by the insertion of an arc in the underlying 
$(0,2k)$-tangle diagrams connecting the outer boundary with the inner boundary. This newly inserted arc is the defect line. The intertwiners $\phi_{2k+1}$ in the link pattern tower are more subtle. The intertwiner $\phi_{2k+1}$ acts on the representative of a $(1,2k+1)$-tangle diagram by detaching the defect line from the inner boundary and
reattaching it to the outer boundary in two different ways, corresponding to the two obvious ways that it can pass the hole of the annulus.
The two contributions get different weights $t^{\frac{1}{4}}$ and $v$, respectively. In Theorem \ref{intertwinerstower} we have described the operation $\phi_{2k+1}$ 
as the composition of arc insertion and composing with the linear combination 
$U\in\textup{Hom}_{\mathcal{S}}(0,2)$ of the two basic
$(0,2)$-tangle diagrams $c_+$ and $c_-$.

\begin{example}\label{exampleonannulus}
	\begin{align*}
		\begin{pspicture}[shift=-1.4](-1.5,-1.5)(1.5,1.5)
      			\SpecialCoor
    			\pscircle[fillstyle=solid, fillcolor=diskin,linewidth=1.2pt](0,0){1}
			\pscircle[fillstyle=solid, fillcolor=white, linewidth=1.2pt](0,0){0.25}
    			\degrees[4]
     			\rput{0}(1.2;0){\tiny $1$}
    			\rput{0}(1.2;1){\tiny $2$}
    			\rput{0}(1.2;2){\tiny $3$}
			\rput{0}(1.2;3){\tiny $4$}
			\pscurve[linecolor=line,linewidth=1.2pt](1;1)(0.9;1)(-0.5;1.5)(0.9;2)(1;2)
			\pscurve[linecolor=line,linewidth=1.2pt](1;3)(0.9;3)(0.7;3.5)(0.9;0)(1;0)
  		 \end{pspicture}
		& \overset{\phi_4}{\longmapsto}
		\begin{pspicture}[shift=-1.4](-1.5,-1.5)(1.5,1.5)
     			\SpecialCoor
    			\pscircle[fillstyle=solid, fillcolor=diskin,linewidth=1.2pt](0,0){1}
			\pscircle[fillstyle=solid, fillcolor=white, linewidth=1.2pt](0,0){0.25}
    			\degrees[5]
     			\rput{0}(1.2;0){\tiny $1$}
    			\rput{0}(1.2;1){\tiny $2$}
    			\rput{0}(1.2;2){\tiny $3$}
			\rput{0}(1.2;3){\tiny $4$}
			\rput{0}(1.2;4){\tiny $5$}
			\pscurve[linecolor=line,linewidth=1.2pt](1;4)(0.9;4)(0.8;3.8)(0.5;3)(0.45;2)(0.4;1)(0.35;0)(0.25;0)
			\psdot[linecolor=diskmid,dotsize=0.3](0.64; 3.5)
			\psdot[linecolor=diskmid,dotsize=0.3](0.5; 2.9)
			\pscurve[linecolor=line,linewidth=1.2pt](1;1)(0.9;1)(0.7;0.5)(-0.4;1.5)(0.7;2.5)(0.9;2)(1;2)
			\pscurve[linecolor=line,linewidth=1.2pt](1;3)(0.9;3)(0.65;4)(0.9;0)(1;0)
  		 \end{pspicture} \\
		 %%%%%%%%%%
		 \begin{pspicture}[shift=-1.4](-1.5,-1.5)(1.5,1.5)
      			\SpecialCoor
    			\pscircle[fillstyle=solid, fillcolor=diskin,linewidth=1.2pt](0,0){1}
			\pscircle[fillstyle=solid, fillcolor=white, linewidth=1.2pt](0,0){0.25}
    			\degrees[3]
     			\rput{0}(1.2;0){\tiny $1$}
    			\rput{0}(1.2;1){\tiny $2$}
    			\rput{0}(1.2;2){\tiny $3$}
			\pscurve[linecolor=line,linewidth=1.2pt](1;1)(0.9;1)(0.5;0.5)(0.35;0)(0.25;0)
			\pscurve[linecolor=line,linewidth=1.2pt](1;2)(0.9;2)(0.5;2.5)(0.9;3)(1;3)
  		 \end{pspicture}
	&\overset{\phi_3}{\longmapsto}
		 \begin{pspicture}[shift=-1.4](-1.5,-1.5)(1.5,1.5)
    			\SpecialCoor
    			\pscircle[fillstyle=solid, fillcolor=diskin,linewidth=1.2pt](0,0){1}
			\pscircle[fillstyle=solid, fillcolor=white, linewidth=1.2pt](0,0){0.35}
    			\degrees[4]
     			\rput{0}(1.2;0){\tiny $1$}
    			\rput{0}(1.2;1){\tiny $2$}
    			\rput{0}(1.2;2){\tiny $3$}
			\rput{0}(1.2;3){\tiny $4$}
			\rput{0}(0;0){\small $U$}
			\pscurve[linecolor=line,linewidth=1.2pt](1;1)(0.9;1)(0.5;0.5)(0.45;0)(0.35;0)
			\pscurve[linecolor=line,linewidth=1.2pt](1;3)(0.9;3)(0.5;2.5)(0.45;2)(0.35;2)
			\psdot[linecolor=diskmid,dotsize=0.3](0.5; 2.5)
			\pscurve[linecolor=line,linewidth=1.2pt](1;2)(0.9;2)(0.5;3)(0.9;0)(1;0)
  		 \end{pspicture}
	 =
		 t^{\frac{1}{4}}
 			 \begin{pspicture}[shift=-1.4](-1.5,-1.5)(1.5,1.5)
    				\SpecialCoor
    				\pscircle[fillstyle=solid, fillcolor=diskin,linewidth=1.2pt](0,0){1}
				\pscircle[fillstyle=solid, fillcolor=white, linewidth=1.2pt](0,0){0.25}
    				\degrees[4]
   			  	\rput{0}(1.2;0){\tiny $1$}
    				\rput{0}(1.2;1){\tiny $2$}
    				\rput{0}(1.2;2){\tiny $3$}
				\rput{0}(1.2;3){\tiny $4$}
				\pscurve[linecolor=line,linewidth=1.2pt](1;1)(0.9;1)(0.35;2)(0.9;3)(1;3)
				\psdot[linecolor=diskmid,dotsize=0.3](0.5; 2.8)
				\pscurve[linecolor=line,linewidth=1.2pt](1;0)(0.9;0)(0.5;2.9)(0.9;2)(1;2)
  			 \end{pspicture}
  		 +v
   			  \begin{pspicture}[shift=-1.4](-1.5,-1.5)(1.5,1.5)
                                 	\SpecialCoor
                                	\pscircle[fillstyle=solid, fillcolor=diskin,linewidth=1.2pt](0,0){1}
				\pscircle[fillstyle=solid, fillcolor=white, linewidth=1.2pt](0,0){0.25}
                                	\degrees[4]
                                 	\rput{0}(1.2;0){\tiny $1$}
                                	\rput{0}(1.2;1){\tiny $2$}
                                	\rput{0}(1.2;2){\tiny $3$}
                            	\rput{0}(1.2;3){\tiny $4$}
                            	\pscurve[linecolor=line,linewidth=1.2pt](1;1)(0.9;1)(0.35;0)(0.9;3)(1;3)
                            	\psdot[linecolor=diskmid,dotsize=0.3](0.5; 3.3)
                            	\pscurve[linecolor=line,linewidth=1.2pt](1;0)(0.9;0)(0.5;3)(0.9;2)(1;2)
   			\end{pspicture} 
		\end{align*}

\end{example}

Let $\mathbb{D}:=\{z\in\mathbb{C} \,\, | \,\, |z|\leq 2\}$
be the unit disc of radius two and $\mathbb{D}^\ast:=\mathbb{D}\setminus \{0\}$. 
Let $\mathcal{L}_{2k}$ be the set of link patterns in $\mathbb{D}^\ast$
connecting the $2k$ marked points $\{2\xi_{2k}^{i-1}\}_{i=1}^{2k}$, i.e., it is 
the set of perfect noncrossing matchings within $\mathbb{D}^\ast$ of the 
marked points $\{2\xi_{2k}^{i-1}\}_{i=1}^{2k}$. For $n=2k+1$ odd, let
$\mathcal{L}_{2k+1}$ be the set of link patterns in $\mathbb{D}$ connecting the $2k+2$ marked points
$\{0\}\cup\{2\xi_{2k+1}^{i-1}\}_{i=1}^{2k+1}$. 
In this context we call the line connecting to $0$ the defect line. Since the defect line 
is now connected to $0$ instead of the hole of the annulus we are losing the information about
the winding of the defect line.
This allows us to realize the link pattern tower
for twist weight $v=1$ on the vector spaces $\mathbb{C}[\mathcal{L}_n]$ with linear basis $\mathcal{L}_n$ as follows.

Consider the map $A\rightarrow\mathbb{D}:=\{z\in\mathbb{C} \,\, | \,\, |z|\leq 2\}$ given by $re^{i\theta}\mapsto 2e^{\frac{2-r}{r-1}}e^{i\theta}$ for $r\in (1,2]$ and mapping $C_i$ onto $0$. 
Note that the map fixes
the outer boundary $C_o$ pointwise.
In this way the set $\widetilde{\mathcal{C}}_{n}$ of affine diagrams in $A$ labelling a basis of 
the $n$th representation space in the link pattern tower is identified with
$\mathcal{L}_n$ for $n\in\mathbb{Z}_{\geq 0}$.
This gives a vector space identification of $L_{n}$ with
$\mathbb{C}[\mathcal{L}_{n}]$.
We now transport the $\textup{TL}_{2k}$-module structure on $V_{2k}(u)$ 
and the $\textup{TL}_{2k+1}$-module structure on $V_{2k+1}(v)$ to 
$\mathbb{C}[\mathcal{L}_{2k}]$ and $\mathbb{C}[\mathcal{L}_{2k+1}]$ respectively through these linear isomorphisms.
It leads to an explicit realization of the link pattern tower with twist weight $v=1$
as a tower $\{(\phi_n,\mathbb{C}[\mathcal{L}_n])\}_{n\in\mathbb{Z}_{\geq 0}}$ of extended affine Temperley-Lieb algebra modules.

Note that the descriptions of the intertwiners $\phi_{2k}$ and $\phi_{2k+1}$ in terms of link patterns are as before: $\phi_{2k}$ is the insertion of a defect line, and $\phi_{2k+1}$ is detaching the defect line from the puncture $0$ and reattaching it to
the outer boundary in two different ways.
Note though that the crucial second description of $\phi_{2k+1}$, in which a second defect line is added first and then the two defect lines are detached from the puncture $0$ and connected to each other in two different ways, requires that one works on the annulus $A$ instead of on the punctured disc $\mathbb{D}^\ast$. 
However, there is an analogue to this on the punctured disc using so-called puncture skein relations \cite{RY}, see Remark \ref{rogeryang} for further details.

\begin{example} Let $v=1$.
\begin{enumerate}
	
	\item Example of the action of $e_2 \in \textup{TL}_3$ on $\C[\mathcal{L}_3]$:
	$$ \begin{pspicture}[shift=-1.4](-1.5,-1.5)(1.5,1.5)
     	\SpecialCoor
    	\pscircle[fillstyle=solid, fillcolor=diskin,linewidth=1.2pt](0,0){1}
    	\pscircle[fillstyle=solid, fillcolor=white, linewidth=1.2pt](0,0){0.25}
    	\degrees[3]
	\rput{0}(1.2;0){\tiny $1$}
    	\rput{0}(1.2;1){\tiny $2$}
    	\rput{0}(1.2;2){\tiny $3$}
	\psline[linecolor=line,linewidth=1.2pt](1;0)(0.25;0)
	\pscurve[linecolor=line,linewidth=1.2pt](0.25;1)(0.35;1)(0.5;1.5)(0.35;2)(0.25;2)
	\pscurve[linecolor=line,linewidth=1.2pt](1;1)(0.9;1)(0.7;1.5)(0.9;2)(1;2)
   \end{pspicture}  
   \circ
       \begin{pspicture}[shift=-1.4](-1.5,-1.5)(1.5,1.5)
      	\SpecialCoor
    	\pscircle[fillstyle=solid, fillcolor=diskin,linewidth=1.2pt](0,0){1}
    	\degrees[3]
     	\rput{0}(1.2;0){\tiny $1$}
    	\rput{0}(1.2;1){\tiny $2$}
    	\rput{0}(1.2;2){\tiny $3$}
	\psline[linecolor=line,linewidth=1.2pt](1;1)(0;1)
	\pscurve[linecolor=line,linewidth=1.2pt](1;2)(0.9;2)(0.5;2.5)(0.9;3)(1;3)
	\psdot[dotstyle=asterisk,dotscale=1.5](0,0)
   \end{pspicture}
      =
  \begin{pspicture}[shift=-1.4](-1.5,-1.5)(1.5,1.5)
     	\pscircle[fillstyle=solid, fillcolor=diskin,linewidth=1.2pt](0,0){1}
    	\degrees[3]
     	\rput{0}(1.2;0){\tiny $1$}
    	\rput{0}(1.2;1){\tiny $2$}
    	\rput{0}(1.2;2){\tiny $3$}
	\psline[linecolor=line,linewidth=1.2pt](1;0)(0;0)
	\pscurve[linecolor=line,linewidth=1.2pt](1;1)(0.9;1)(0.5;1.5)(0.9;2)(1;2)
	\psdot[dotstyle=asterisk,dotscale=1.5](0,0)
   \end{pspicture}   
   $$
    	\item Example of the action of $e_2 \in \textup{TL}_4$ on $\C[\mathcal{L}_4]$:
   $$ \begin{pspicture}[shift=-1.4](-1.5,-1.5)(1.5,1.5)
     	\SpecialCoor
    	\pscircle[fillstyle=solid, fillcolor=diskin,linewidth=1.2pt](0,0){1}
    	\pscircle[fillstyle=solid, fillcolor=white, linewidth=1.2pt](0,0){0.25}
    	\degrees[4]
	\rput{0}(1.2;0){\tiny $1$}
    	\rput{0}(1.2;1){\tiny $2$}
    	\rput{0}(1.2;2){\tiny $3$}
	\rput{0}(1.2;3){\tiny $4$}
	\psline[linecolor=line,linewidth=1.2pt](1;0)(0.25;0)
	\pscurve[linecolor=line,linewidth=1.2pt](0.25;1)(0.35;1)(0.5;1.5)(0.35;2)(0.25;2)
	\pscurve[linecolor=line,linewidth=1.2pt](1;1)(0.9;1)(0.7;1.5)(0.9;2)(1;2)
	\psline[linecolor=line,linewidth=1.2pt](1;3)(0.25;3)
   \end{pspicture}  
   \circ
       \begin{pspicture}[shift=-1.4](-1.5,-1.5)(1.5,1.5)
     	\SpecialCoor
    	\pscircle[fillstyle=solid, fillcolor=diskin,linewidth=1.2pt](0,0){1}
    	\degrees[4]
     	\rput{0}(1.2;0){\tiny $1$}
    	\rput{0}(1.2;1){\tiny $2$}
    	\rput{0}(1.2;2){\tiny $3$}
	\rput{0}(1.2;3){\tiny $4$}
	\pscurve[linecolor=line,linewidth=1.2pt](1;1)(0.9;1)(-0.5;1.5)(0.9;2)(1;2)
	\pscurve[linecolor=line,linewidth=1.2pt](1;3)(0.9;3)(0.7;3.5)(0.9;0)(1;0)
	\psdot[dotstyle=asterisk,dotscale=1.5](0,0)
   \end{pspicture}
= (t^{\frac{1}{4}}+t^{-\frac{1}{4}})
   \begin{pspicture}[shift=-1.4](-1.5,-1.5)(1.5,1.5)
      	\SpecialCoor
    	\pscircle[fillstyle=solid, fillcolor=diskin,linewidth=1.2pt](0,0){1}
    	\degrees[4]
     	\rput{0}(1.2;0){\tiny $1$}
    	\rput{0}(1.2;1){\tiny $2$}
    	\rput{0}(1.2;2){\tiny $3$}
	\rput{0}(1.2;3){\tiny $4$}
	\pscurve[linecolor=line,linewidth=1.2pt](1;1)(0.9;1)(0.5;1.5)(0.9;2)(1;2)
	\pscurve[linecolor=line,linewidth=1.2pt](1;3)(0.9;3)(0.5;3.5)(0.9;0)(1;0)
	\psdot[dotstyle=asterisk,dotscale=1.5](0,0)
   \end{pspicture}
   $$
   \item Example of the intertwiner $\phi_2$ acting on $\mathbb{C}[\mathcal{L}_2]$:
$$     
   \begin{pspicture}[shift=-1.4](-1.5,-1.5)(1.5,1.5)
	\SpecialCoor
	\pscircle[fillstyle=ccslope,slopebegin=diskin,slopeend=diskout,linewidth=1.2pt](0,0){1}
	\degrees[8]
	\pscurve[linecolor=line,linewidth=1.2pt](1;0)(0.9;0)(0.5;6.5)(0.5;5.5)(0.9;4)(1;4)
	\rput{0}(1.2;0){\tiny $1$}
	\rput{0}(1.2;4){\tiny $2$}
	\psdot[dotstyle=asterisk,dotscale=1.5](0,0)
\end{pspicture} 
\overset{\phi_{2}}{\longmapsto}
   t^{\frac{1}{4}} 
    \begin{pspicture}[shift=-1.4](-1.5,-1.5)(1.5,1.5)
     	\SpecialCoor
    	\pscircle[fillstyle=solid, fillcolor=diskin,linewidth=1.2pt](0,0){1}
    	\degrees[3]
     	\rput{0}(1.2;0){\tiny $1$}
    	\rput{0}(1.2;1){\tiny $2$}
    	\rput{0}(1.2;2){\tiny $3$}
	\psline[linecolor=line,linewidth=1.2pt](1;0)(0;0)
	\pscurve[linecolor=line,linewidth=1.2pt](1;1)(0.9;1)(0.5;1.5)(0.9;2)(1;2)
	\psdot[dotstyle=asterisk,dotscale=1.5](0,0)
   \end{pspicture}
   + t^{-\frac{1}{4}} 
    \begin{pspicture}[shift=-1.4](-1.5,-1.5)(1.5,1.5)
     	\SpecialCoor
    	\pscircle[fillstyle=solid, fillcolor=diskin,linewidth=1.2pt](0,0){1}
    	\degrees[3]
     	\rput{0}(1.2;0){\tiny $1$}
    	\rput{0}(1.2;1){\tiny $2$}
    	\rput{0}(1.2;2){\tiny $3$}
	\psline[linecolor=line,linewidth=1.2pt](1;1)(0;1)
	\pscurve[linecolor=line,linewidth=1.2pt](1;2)(0.9;2)(0.5;2.5)(0.9;3)(1;3)
	\psdot[dotstyle=asterisk,dotscale=1.5](0,0)
   \end{pspicture}   
$$
	\item Example of the intertwiner $\phi_3$ acting on $\mathbb{C}[\mathcal{L}_3]$ (it corresponds to the second example from Example \ref{exampleonannulus} with $v=1$):
	\begin{align*}
   \begin{pspicture}[shift=-1.4](-1.5,-1.5)(1.5,1.5)
      	\SpecialCoor
    	\pscircle[fillstyle=solid, fillcolor=diskin,linewidth=1.2pt](0,0){1}
    	\degrees[3]
     	\rput{0}(1.2;0){\tiny $1$}
    	\rput{0}(1.2;1){\tiny $2$}
    	\rput{0}(1.2;2){\tiny $3$}
	\psline[linecolor=line,linewidth=1.2pt](1;1)(0;1)
	\pscurve[linecolor=line,linewidth=1.2pt](1;2)(0.9;2)(0.7;2.5)(0.9;3)(1;3)
	\psdot[dotstyle=asterisk,dotscale=1.5](0,0)
   \end{pspicture} \!\!
\overset{\phi_{3}}{\longmapsto} 
t^{\frac{1}{2}}\!
   \begin{pspicture}[shift=-1.4](-1.5,-1.5)(1.5,1.5)
     	\SpecialCoor
    	\pscircle[fillstyle=solid, fillcolor=diskin,linewidth=1.2pt](0,0){1}
    	\degrees[4]
     	\rput{0}(1.2;0){\tiny $1$}
    	\rput{0}(1.2;1){\tiny $2$}
    	\rput{0}(1.2;2){\tiny $3$}
	\rput{0}(1.2;3){\tiny $4$}
	\pscurve[linecolor=line,linewidth=1.2pt](1;0)(0.9;0)(-0.5;0.5)(0.9;1)(1;1)
	\pscurve[linecolor=line,linewidth=1.2pt](1;2)(0.9;2)(0.7;2.5)(0.9;3)(1;3)
	\psdot[dotstyle=asterisk,dotscale=1.5](0,0)
   \end{pspicture}\!\!
	+ \!\!\!
	\begin{pspicture}[shift=-1.4](-1.5,-1.5)(1.5,1.5)
     	\SpecialCoor
    	\pscircle[fillstyle=solid, fillcolor=diskin,linewidth=1.2pt](0,0){1}
    	\degrees[4]
     	\rput{0}(1.2;0){\tiny $1$}
    	\rput{0}(1.2;1){\tiny $2$}
    	\rput{0}(1.2;2){\tiny $3$}
	\rput{0}(1.2;3){\tiny $4$}
	\pscurve[linecolor=line,linewidth=1.2pt](1;1)(0.9;1)(0.5;1.5)(0.9;2)(1;2)
	\pscurve[linecolor=line,linewidth=1.2pt](1;3)(0.9;3)(0.5;3.5)(0.9;0)(1;0)
	\psdot[dotstyle=asterisk,dotscale=1.5](0,0)
   \end{pspicture}\!\!\!
	+ t^{\frac{1}{4}}
	\begin{pspicture}[shift=-1.4](-1.5,-1.5)(1.5,1.5)
     	\SpecialCoor
    	\pscircle[fillstyle=solid, fillcolor=diskin,linewidth=1.2pt](0,0){1}
    	\degrees[4]
     	\rput{0}(1.2;0){\tiny $1$}
    	\rput{0}(1.2;1){\tiny $2$}
    	\rput{0}(1.2;2){\tiny $3$}
	\rput{0}(1.2;3){\tiny $4$}
	\pscurve[linecolor=line,linewidth=1.2pt](1;0)(0.9;0)(0.5;0.5)(0.9;1)(1;1)
	\pscurve[linecolor=line,linewidth=1.2pt](1;2)(0.9;2)(0.5;2.5)(0.9;3)(1;3)
	\psdot[dotstyle=asterisk,dotscale=1.5](0,0)
   \end{pspicture}\!\!
	+ t^{-\frac{1}{4}}
	\begin{pspicture}[shift=-1.4](-1.5,-1.5)(1.5,1.5)
      	\SpecialCoor
    	\pscircle[fillstyle=solid, fillcolor=diskin,linewidth=1.2pt](0,0){1}
    	\degrees[4]
     	\rput{0}(1.2;0){\tiny $1$}
    	\rput{0}(1.2;1){\tiny $2$}
    	\rput{0}(1.2;2){\tiny $3$}
	\rput{0}(1.2;3){\tiny $4$}
	\pscurve[linecolor=line,linewidth=1.2pt](1;1)(0.9;1)(-0.5;1.5)(0.9;2)(1;2)
	\pscurve[linecolor=line,linewidth=1.2pt](1;3)(0.9;3)(0.7;3.5)(0.9;0)(1;0)
	\psdot[dotstyle=asterisk,dotscale=1.5](0,0)
   \end{pspicture}
   \end{align*}
   \end{enumerate}
   \end{example}

%%%%%%%%%%%%%%%%%%%%%%%%%%%%%%%%%%%
\begin{remark}
The link pattern tower
$\{(\mathbb{C}[\mathcal{L}_n],\phi_n)\}_{n\in\mathbb{Z}_{\geq 0}}$ with twist weight $v=1$ plays an important role
in the study of the dense loop model on the semi-infinite cylinder \cite{KP,FZJZ,ANS}. The representation space
$\mathbb{C}[\mathcal{L}_n]$ is the state space of the model of system size $n$. 
In \cite{FZJZ} the dense loop model of system size $2k+1$ is related to the dense loop model
of system size $2k$ through the map $\phi_{2k}$. 
The results in this paper allows one to also relate the dense loop model of system size $2k+2$ to the dense loop model
of system size $2k+1$ through the (nontrivial) intertwiner $\phi_{2k+1}$. We will return to this
in \cite{ANS}, in which we also derive recursion relations for associated ground states and for
associated solutions of quantum Knizhnik-Zamolodchikov equations.
\end{remark}

We end the section\footnote{We thank an anonymous referee for pointing us to a possible connection with \cite{RY}.}
 by relating the link pattern tower and the connecting maps to a relative
version of Roger's and Yang's \cite[Def. 2.3]{RY} skein algebra on the punctured disc $\mathbb{D}^*$.
For $n\in\mathbb{Z}_{\geq 0}$ write $\overline{n}\in\{0,1\}$ for the residue of $n$ modulo two and set
\[
s_0:=t^{\frac{1}{4}}+t^{-\frac{1}{4}},\qquad s_1:=1,
\]
which are the noncontractible loop weight and the twist weight of the link pattern tower for $v=1$.
Set
\begin{equation}\label{algebra}
\mathcal{A}:=\bigoplus_{n\in\mathbb{Z}_{\geq 0}}V_n(s_{\overline{n}})
\end{equation}
for the direct sum of the representation spaces of the link pattern tower.
To simplify notations 
we write $\overline{Y}_n$ for the element $Y_{s_{\overline{n}}}\in V_n(s_{\overline{n}})=\textup{Hom}_{\mathcal{S}}(\overline{n},n)\otimes_{\textup{TL}_{\overline{n}}}
\mathbb{C}_{s_{\overline{n}}}$ associated to $Y\in\textup{Hom}_{\mathcal{S}}(\overline{n},n)$.

%%%%%%%%%%%%%%%%%%%%%%%%%%%%%%%%%%%%%%%%%%%%%
\begin{proposition}\label{RYconnection}
$\mathcal{A}$ is a graded associative complex algebra with multiplication defined by
\begin{equation*}
\overline{Y}_m\cdot \overline{Z}_n:=
\begin{cases}
(Y\times_{\mathcal{S}}Z)_{m+n},\qquad &\hbox{ if }\, (\overline{m},\overline{n})\not=(1,1),\\
((Y\times_{\mathcal{S}}Z)\circ U)_{m+n},\qquad &\hbox{ if }\, (\overline{m},\overline{n})=(1,1)
\end{cases}
\end{equation*}
for $Y\in\textup{Hom}_{\mathcal{S}}(\overline{m},m)$ and $Z\in\textup{Hom}_{\mathcal{S}}(\overline{n},n)$.
The unit element is $\emptyset_0\in V_0(s_0)$.
\end{proposition}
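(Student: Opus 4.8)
The plan is to check, in order, that the displayed formula descends to a well-defined operation on $\mathcal{A}$, that it is graded and unital, and finally that it is associative. Throughout I will lean on the strict monoidal structure of $\mathcal{S}$: strict associativity of $\times_{\mathcal{S}}$, the bifunctoriality (interchange) identity $(a\circ b)\times_{\mathcal{S}}(c\circ d)=(a\times_{\mathcal{S}}c)\circ(b\times_{\mathcal{S}}d)$, and the fact that the empty diagram $\emptyset=\mathbf{1}_0$ is a two-sided unit for $\times_{\mathcal{S}}$. Write $\chi_0\colon\textup{TL}_0\to\mathbb{C}$, $X\mapsto s_0$ and $\chi_1\colon\textup{TL}_1\to\mathbb{C}$, $\rho\mapsto s_1=1$ for the characters through which $V_n(s_{\overline n})=\textup{Hom}_{\mathcal{S}}(\overline n,n)\otimes_{\textup{TL}_{\overline n}}\mathbb{C}$ is defined, the right $\textup{TL}_{\overline n}$-action on $\textup{Hom}_{\mathcal{S}}(\overline n,n)$ being precomposition. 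The grading is immediate once well-definedness is known: $Y\times_{\mathcal{S}}Z\in\textup{Hom}_{\mathcal{S}}(\overline m+\overline n,m+n)$, and when $(\overline m,\overline n)=(1,1)$ composing on the right with $U\in\textup{Hom}_{\mathcal{S}}(0,2)$ produces an element of $\textup{Hom}_{\mathcal{S}}(0,m+n)$, so in every case the product of something in grading $m$ with something in grading $n$ lies in $V_{m+n}(s_{\overline{m+n}})$. For the unit, $\emptyset\in\textup{End}_{\mathcal{S}}(0)$ and $0$ is the monoidal unit, so $\emptyset\times_{\mathcal{S}}Z=Z=Z\times_{\mathcal{S}}\emptyset$ and $(\overline 0,\overline n)\neq(1,1)$; hence $\emptyset_0\cdot\overline Z_n=\overline Z_n=\overline Z_n\cdot\emptyset_0$.

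For well-definedness, by bilinearity it suffices to show that replacing $Y$ by $Y\circ g$ with $g$ a generator of $\ker\chi_{\overline m}$ (i.e. $g=X-s_0\mathbf{1}_0$ if $\overline m=0$, $g=\rho-\mathbf{1}_1$ if $\overline m=1$) leaves $\overline Y_m\cdot\overline Z_n$ unchanged, and symmetrically for $Z$. By the interchange identity $(Y\circ g)\times_{\mathcal{S}}Z=(Y\times_{\mathcal{S}}Z)\circ(g\times_{\mathcal{S}}\mathbf{1}_{\overline n})$, which pushes the correction into an endomorphism factor; the element $g\times_{\mathcal{S}}\mathbf{1}_{\overline n}$ is computed using Proposition \ref{insertionProp}: it equals $g$ itself when $\overline n=0$ (as $-\times_{\mathcal{S}}\mathbf{1}_0$ is the identity), and equals $\mathcal{I}_0(g)=t^{\frac14}\rho+t^{-\frac14}\rho^{-1}-s_0\mathbf{1}_1$ when $\overline m=0,\overline n=1$. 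In the cases $(\overline m,\overline n)\neq(1,1)$ one then just reads off that this endomorphism is killed by the relevant character $\chi_{\overline{m+n}}$, using $s_0=t^{\frac14}+t^{-\frac14}$. The one genuinely new point is $(\overline m,\overline n)=(1,1)$: here one must check that $\bigl((Y\times_{\mathcal{S}}Z)\circ(\mathcal{I}_1(\rho)-\mathbf{1}_2)\circ U\bigr)_{m+n}=0$ in $V_{m+n}(s_0)$, and this follows from the identity $\mathcal{I}_1(\rho)\circ U-U=t^{-\frac14}[c_-]\circ(X-s_0\mathbf{1}_0)$ extracted from the proof of Lemma \ref{Step12}(ii) (via \eqref{basisaction}), since the right-hand side lies in the right ideal of $\textup{Hom}_{\mathcal{S}}(0,2)$ generated by $X-s_0\mathbf{1}_0$, which is annihilated in $V_{m+n}(s_0)$ after any left composition. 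The corrections on the $Z$-side run identically, the only extra inputs being $\chi_1(\mathbf{1}_1\times_{\mathcal{S}}X)=s_0$ and $\bigl((\mathbf{1}_1\times_{\mathcal{S}}\rho)\circ U\bigr)_2=U_{s_0}$, obtained by the same one- and two-crossing resolutions as in Proposition \ref{insertionProp}(a),(b).

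For associativity, take $Y\in\textup{Hom}_{\mathcal{S}}(\overline\ell,\ell)$, $Z\in\textup{Hom}_{\mathcal{S}}(\overline m,m)$, $W\in\textup{Hom}_{\mathcal{S}}(\overline n,n)$ and do a case analysis on the parities $(\overline\ell,\overline m,\overline n)$. If at most one of $\ell,m,n$ is odd, no $U$ ever enters and both bracketings reduce, by strict associativity of $\times_{\mathcal{S}}$, to $\bigl((Y\times_{\mathcal{S}}Z)\times_{\mathcal{S}}W\bigr)_{\ell+m+n}$. If exactly two are odd, exactly one $U$ enters, and using interchange, strict associativity, and $U\times_{\mathcal{S}}\mathbf{1}_0=U=\mathbf{1}_0\times_{\mathcal{S}}U$, both bracketings reduce to $\bigl(((Y\times_{\mathcal{S}}Z)\times_{\mathcal{S}}W)\circ U\bigr)_{\ell+m+n}$. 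The remaining case, $\ell,m,n$ all odd, is the crux: identifying the ternary $\times_{\mathcal{S}}$ by strict associativity, one gets $(\overline Y_\ell\cdot\overline Z_m)\cdot\overline W_n=\bigl(((Y\times_{\mathcal{S}}Z)\times_{\mathcal{S}}W)\circ(U\times_{\mathcal{S}}\mathbf{1}_1)\bigr)_{\ell+m+n}$ whereas $\overline Y_\ell\cdot(\overline Z_m\cdot\overline W_n)=\bigl(((Y\times_{\mathcal{S}}Z)\times_{\mathcal{S}}W)\circ(\mathbf{1}_1\times_{\mathcal{S}}U)\bigr)_{\ell+m+n}$. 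So associativity reduces to the single skein identity
\[
\bigl(U\times_{\mathcal{S}}\mathbf{1}_1\bigr)_1=\bigl(\mathbf{1}_1\times_{\mathcal{S}}U\bigr)_1\quad\text{in }V_3(1),
\]
equivalently, that $U\times_{\mathcal{S}}\mathbf{1}_1-\mathbf{1}_1\times_{\mathcal{S}}U$ lies in the right $(\rho-\mathbf{1}_1)$-ideal of $\textup{Hom}_{\mathcal{S}}(1,3)$; granting this, writing the difference as $R\circ(\rho-\mathbf{1}_1)$ and composing on the left with an arbitrary $P\in\textup{Hom}_{\mathcal{S}}(3,\ell+m+n)$ yields $(P\circ R)\circ(\rho-\mathbf{1}_1)$, which vanishes in $V_{\ell+m+n}(1)$, so associativity follows in general.

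The hard part will be precisely this last identity. The two sides differ only by switching the over/undercrossings of the inserted defect arc against the arcs of $U$, so the argument is to expand $U=t^{\frac14}[c_+]+[c_-]$, write out the two $(1,3)$-tangle diagrams $c_\pm\ast\textup{Id}_1$ and $\textup{Id}_1\ast c_\pm$ explicitly, resolve the (at most two) crossings that occur via the Kauffman skein relation \eqref{kauffman}, and check that the two resulting linear combinations of affine $(1,3)$-diagrams coincide once Dehn twists of the defect line are removed — which is legitimate in $V_3(1)$ since the twist weight is $v=1$, cf. \eqref{Dehnremoval}. Conceptually, this identity is exactly the instance of the Roger--Yang puncture-skein relation \cite{RY} discussed in Remark \ref{rogeryang}: it is what allows two defect lines attached to the puncture to be merged coherently, and is the reason the stacking product on $\mathcal{A}$ is associative despite the asymmetric placement of $U$ in the definition of the multiplication.
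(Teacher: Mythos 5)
Your proposal is correct and follows essentially the same route as the paper: well-definedness is checked against the balancing conditions over $\textup{TL}_0$ and $\textup{TL}_1$ (your identity $\mathcal{I}_1(\rho)\circ U-U=t^{-\frac14}[c_-]\circ(X-s_0\mathbf{1}_0)$ is exactly what underlies the paper's appeal to Lemma \ref{Step12}{\bf (ii)} and \eqref{RIIprime}), and associativity is reduced to the single identity $\overline{U\times_{\mathcal{S}}\mathbf{1}_1}=\overline{\mathbf{1}_1\times_{\mathcal{S}}U}$ in $V_3(1)$, which the paper likewise settles by a direct Kauffman-relation expansion (displaying the common three-term value). The only difference is that you describe this final skein computation rather than carrying it out, but the plan you give is precisely the verification the paper performs.
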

%%%%%%%%%%%%%%%%%%%%%%%%%%%%%%%%%%%%%%%%%%%%
\begin{proof}
We first show that the product is well-defined. If $(\overline{m},\overline{n})=(0,0)$ then clearly
\[
(\overline{Y\circ X)}_m\cdot \overline{Z}_n=(t^{\frac{1}{4}}+t^{-\frac{1}{4}})\overline{Y}_m\cdot 
\overline{Z}_n=
\overline{Y}_m\cdot (\overline{Z\circ X})_n
\]
since in both the left and right hand side of the equation, the inserted loop around the hole can 
be removed by the scalar factor $t^{\frac{1}{4}}+t^{-\frac{1}{4}}$ using the noncontractible loop removal relation. If $(\overline{m},\overline{n})=(0,1)$ then 
\[(\overline{Y\circ X})_m\cdot \overline{Z}_n=(t^{\frac{1}{4}}+t^{-\frac{1}{4}})\overline{Y}_m\cdot 
\overline{Z}_n
\]
from (the proof of) Proposition \ref{insertionProp}{\bf (a)}, while 
$\overline{Y}_m\cdot (\overline{Z\circ\rho})_n=\overline{Y}_m\cdot \overline{Z}_n$ is a direct consequence of the Dehn twist removal relation
since $v=1$. The case $(\overline{m},\overline{n})=(0,1)$ is checked similarly. For $(\overline{m},\overline{n})=(1,1)$ we have
\begin{equation*}
\begin{split}
(\overline{Y\circ \rho})_m\cdot \overline{Z}_n&=\overline{((Y\circ\rho)\times_{\mathcal{S}}Z)\circ U}_{m+n}\\
&=\overline{(Y\times_{\mathcal{S}}Z)\circ (\rho\times_{\mathcal{S}}\mathbf{1}_1)\circ U}_{m+n}\\
&=\overline{(Y\times_{\mathcal{S}}Z)\circ \mathcal{I}_1(\rho)\circ U}_{m+n}\\
&=\overline{(Y\times_{\mathcal{S}}Z)\circ U}_{m+n}=\overline{Y}_m\cdot \overline{Z}_n,
\end{split}
\end{equation*}
where we used (the proof of) Lemma \ref{Step12}{\bf (ii)} for the fourth equality. In fact, the nontrivial
equality we are using here is
\begin{equation}\label{RIIprime}		
		 \begin{pspicture}[shift=-0.8](-1.5,-1)(1.5,1)
	\SpecialCoor
	\definecolor{line}{rgb}{0,0,1}
	\pscircle[fillstyle=ccslope,slopebegin=diskin,slopeend=diskout,linewidth=1.2pt](0,0){1}
	\pscircle[fillstyle=solid, fillcolor=white, linewidth=1.2pt](0,0){0.35}
	\degrees[8]
	\pscurve[linecolor=line,linewidth=1.2pt](0.35;0)(0.45;0)(0.5;1)(0.6;2)(0.6;3)(0.6;4)(0.6;5)(0.6;6)(0.6;7)(0.8;8)(1;8)
	\psline[linecolor=line,linewidth=1.2pt](1;4)(0.7;4)
			\psline[linecolor=line,linewidth=1.2pt](0.35;4)(0.5;4)
	\rput{0}(1.2;4){\tiny $2$}
	\rput{0}(0;2){\small $U$}
	\rput{0}(1.2;0){\tiny $1$}
\end{pspicture}	
=
 \begin{pspicture}[shift=-0.8](-1.5,-1)(1.5,1)
	\SpecialCoor
	\definecolor{line}{rgb}{0,0,1}
	\pscircle[fillstyle=ccslope,slopebegin=diskin,slopeend=diskout,linewidth=1.2pt](0,0){1}
	\pscircle[fillstyle=solid, fillcolor=white, linewidth=1.2pt](0,0){0.35}
	\degrees[8]
%	\pscurve[linecolor=line,linewidth=1.2pt](0.25;0)(0.35;0)(0.5;1)(0.6;2)(0.6;3)(0.6;4)(0.6;5)(0.6;6)(0.6;7)(0.8;8)(1;8)
	\psline[linecolor=line,linewidth=1.2pt](1;4)(0.35;4)
			\psline[linecolor=line,linewidth=1.2pt](-0.35;4)(-1;4)
	\rput{0}(1.2;4){\tiny $2$}
	\rput{0}(0;2){\small $U$}
	\rput{0}(1.2;0){\tiny $1$}
\end{pspicture}			
\end{equation}
viewed as an identity in $V_2(t^{\frac{1}{4}}+t^{-\frac{1}{4}})$. In a similar manner, one shows that
$\overline{Y}_m\cdot \overline{Z\circ\rho}_n=\overline{Y}_m\cdot \overline{Z}_n$ if $(\overline{m},\overline{n})=(1,1)$.

Now it remains to show that the product is associative,
\[
(\overline{T}_k\cdot \overline{Y}_m)\cdot \overline{Z}_n=\overline{T}_k\cdot(\overline{Y}_m\cdot 
\overline{Z}_n)
\]
for $T\in\textup{Hom}_{\mathcal{S}}(\overline{k},k)$,
$Y\in\textup{Hom}_{\mathcal{S}}(\overline{m},m)$ and 
$Z\in\textup{Hom}_{\mathcal{S}}(\overline{n},n)$. The only nontrivial case is
$(\overline{k},\overline{m},\overline{n})=(1,1,1)$. Then we have
\begin{equation*}
\begin{split}
(\overline{T}_k\cdot \overline{Y}_m)\cdot \overline{Z}_n&=\overline{((T\times_{\mathcal{S}}Y)\circ U)\times_{\mathcal{S}}
Z}_{k+m+n}\\
&=\overline{((T\times_{\mathcal{S}}Y)\times_{\mathcal{S}}Z)\circ (U\times_{\mathcal{S}}\mathbf{1}_1)
}_{k+m+n}\\
&=\overline{(T\times_{\mathcal{S}}(Y\times_{\mathcal{S}}Z))\circ (\mathbf{1}_1\times_{\mathcal{S}}
U)}_{k+m+n}\\
&=\overline{T}_k\cdot(\overline{Y}_m\cdot \overline{Z}_n),
\end{split}
\end{equation*}
where we used in the third equality that $\overline{U \times_{\mathcal{S}} \mathbf{1}_1} = 
\overline{\mathbf{1}_1\times_{\mathcal{S}} U}$  in $V_3(1)$.
This follows from a direct calculation in the skein, showing that both sides of the equation are equal to
$$ 
t^{\frac{1}{4}} \; 
\begin{pspicture}[shift=-0.8](-1,-1)(1,1)
    	%\psgrid
    	\SpecialCoor
	\pscircle[fillstyle=solid, fillcolor=diskin,linewidth=1.2pt](0,0){1}
	\pscircle[fillstyle=solid, fillcolor=white, linewidth=1.2pt](0,0){0.25}
    	\degrees[3]
     	\rput{0}(1.2;0){\tiny $1$}
    	\rput{0}(1.2;1){\tiny $2$}
    	\rput{0}(1.2;2){\tiny $3$}
	\pscurve[linecolor=line,linewidth=1.2pt](1;0)(0.7;0) (0.25;0)
	\pscurve[linecolor=line,linewidth=1.2pt](1;1)(0.9;1)(0.6;1.5)(0.9;2)(1;2)
   \end{pspicture} \quad
+ t^{\frac{1}{4}} \;
       \begin{pspicture}[shift=-0.8](-1,-1)(1,1)
    	%\psgrid
    	\SpecialCoor
    	\pscircle[fillstyle=solid, fillcolor=diskin,linewidth=1.2pt](0,0){1}
	\pscircle[fillstyle=solid, fillcolor=white, linewidth=1.2pt](0,0){0.25}
    	\degrees[3]
     	\rput{0}(1.2;0){\tiny $1$}
    	\rput{0}(1.2;1){\tiny $2$}
    	\rput{0}(1.2;2){\tiny $3$}
	\pscurve[linecolor=line,linewidth=1.2pt](1;2)(0.9;2)(0.45;1.5)(0.45;1)(0.4;0.5)(0.35;0)(0.25;0)
	\pscurve[linecolor=line,linewidth=1.2pt](1;0)(0.9;0)(0.6;0.5)(0.9;1)(1;1)
   \end{pspicture} \quad
+t^{-\frac{1}{4}} \;  
     \begin{pspicture}[shift=-0.8](-1,-1)(1,1)
    	%\psgrid
    	\SpecialCoor
	\pscircle[fillstyle=solid, fillcolor=diskin,linewidth=1.2pt](0,0){1}
	\pscircle[fillstyle=solid, fillcolor=white, linewidth=1.2pt](0,0){0.25}
    	\degrees[3]
     	\rput{0}(1.2;0){\tiny $1$}
    	\rput{0}(1.2;1){\tiny $2$}
    	\rput{0}(1.2;2){\tiny $3$}
	\pscurve[linecolor=line,linewidth=1.2pt](1;1)(0.9;1)(0.5;0.5)(0.35;0)(0.25;0)
	\pscurve[linecolor=line,linewidth=1.2pt](1;2)(0.9;2)(0.6;2.5)(0.9;3)(1;3)
   \end{pspicture}   
 $$
when viewed as identity in $V_3(1)$ .
\end{proof}
%%%%%%%%%%%%%%%%%%%%%%%%%%%%%%%%
\begin{corollary}\label{corRY}
Let $v=1$. The connecting map $\phi_n: V_n(s_{\overline{n}})\rightarrow V_{n+1}(s_{\overline{n+1}})$
of the link pattern tower is given by
\[
\phi_n(\overline{Y}_n)=\overline{Y}_n\cdot\overline{\mathbf{1}}_1
\]
for $\overline{Y}_n\in V_n(s_{\overline{n}})$.
\end{corollary}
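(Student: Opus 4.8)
The plan is to prove Corollary~\ref{corRY} by unwinding definitions: it amounts to matching the explicit description of $\phi_n$ from Theorem~\ref{intertwinerstower}(i) with the multiplication on $\mathcal{A}$ from Proposition~\ref{RYconnection}, specialised to right multiplication by $\overline{\mathbf{1}}_1\in V_1(s_1)$. The single structural observation needed is that taking the $\ast$-product with the identity $\mathbf{1}_1$ of $\textup{End}_{\mathcal{S}}(1)$ is exactly the arc-insertion functor: for $Y\in\textup{Hom}_{\mathcal{S}}(\overline{n},n)$ one has $Y\times_{\mathcal{S}}\mathbf{1}_1=\mathcal{I}(Y)$ by the very definition $\mathcal{I}=-\times_{\mathcal{S}}1$. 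One should also keep in mind that, for $v=1$, the noncontractible loop weight is $u=t^{\frac{1}{4}}+t^{-\frac{1}{4}}=s_0$ and the twist weight is $v=s_1=1$, so that $s_{\overline{n}}=u$ when $n$ is even and $s_{\overline{n}}=v$ when $n$ is odd; in particular $\overline{Y}_n=Y_u$ for $n$ even and $\overline{Y}_n=Y_v$ for $n$ odd.

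First I would treat the case $n=2k$ even. In the notation of Proposition~\ref{RYconnection} the residue pair relevant for $\overline{Y}_n\cdot\overline{\mathbf{1}}_1$ is $(\overline{n},\overline{1})=(0,1)\neq(1,1)$, so $\overline{Y}_n\cdot\overline{\mathbf{1}}_1=(Y\times_{\mathcal{S}}\mathbf{1}_1)_{n+1}=\bigl(\mathcal{I}(Y)\bigr)_{n+1}$, and since $n+1$ is odd this is the element $\bigl(\mathcal{I}(Y)\bigr)_{s_1}=\bigl(\mathcal{I}(Y)\bigr)_v\in V_{2k+1}(v)$. On the other hand Theorem~\ref{intertwinerstower}(i) gives $\phi_n(\overline{Y}_n)=\phi_{2k}(Y_u)=\bigl(\mathcal{I}(Y)\bigr)_v$. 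The two agree, which settles the even case.

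Next I would treat the case $n=2k+1$ odd. Now the residue pair is $(\overline{n},\overline{1})=(1,1)$, so Proposition~\ref{RYconnection} gives $\overline{Y}_n\cdot\overline{\mathbf{1}}_1=\bigl((Y\times_{\mathcal{S}}\mathbf{1}_1)\circ U\bigr)_{n+1}=\bigl(\mathcal{I}(Y)\circ U\bigr)_{n+1}$, and since $n+1$ is even this is the element $\bigl(\mathcal{I}(Y)\circ U\bigr)_{s_0}=\bigl(\mathcal{I}(Y)\circ U\bigr)_u\in V_{2k+2}(u)$. Theorem~\ref{intertwinerstower}(i) gives $\phi_n(\overline{Y}_n)=\phi_{2k+1}(Y_v)=\bigl(\mathcal{I}(Y)\circ U\bigr)_u$, where $U=t^{\frac{1}{4}}[c_+]+v[c_-]=t^{\frac{1}{4}}[c_+]+[c_-]$ is the same element that enters both the tower and the multiplication on $\mathcal{A}$ for $v=1$. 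Again the two expressions agree, which completes the proof.

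I do not expect a genuine obstacle here: well-definedness of $\phi_n$ is part of Theorem~\ref{intertwinerstower}(i) and well-definedness of the product on $\mathcal{A}$ is part of Proposition~\ref{RYconnection}, so nothing new has to be checked. The only point requiring a moment's care is keeping the two normalisations aligned --- that the $(1,1)$-branch of the multiplication on $\mathcal{A}$, the one inserting the post-composition with $U$, is precisely the branch of the tower corresponding to odd $n$, and that the loop and twist weights $s_0$, $s_1$ are respectively the $u$ and the $v$ of Theorem~\ref{intertwinerstower} in the case $v=1$.
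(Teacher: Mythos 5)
Your proposal is correct and is exactly the argument the paper intends (the corollary is stated without proof as an immediate consequence of Theorem \ref{intertwinerstower}(i) and Proposition \ref{RYconnection}): the identity $Y\times_{\mathcal{S}}\mathbf{1}_1=\mathcal{I}(Y)$ from the definition of $\mathcal{I}$, together with the case split $(\overline{n},\overline{1})\neq(1,1)$ versus $(\overline{n},\overline{1})=(1,1)$ in the product on $\mathcal{A}$, reproduces the two formulas $\phi_{2k}(Y_u)=(\mathcal{I}(Y))_v$ and $\phi_{2k+1}(Z_v)=(\mathcal{I}(Z)\circ U)_u$ at $v=s_1=1$, $u=s_0$. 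Your bookkeeping of the weights $s_0,s_1$ and of the element $U$ is accurate, so nothing is missing.
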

%%%%%%%%%%%%%%%%%%%%%%%%%%%%%%%%%%
\begin{remark} \label{rogeryang}
We wish to point out the connection between Proposition \ref{RYconnection} and the 
work of Roger and Yang \cite{RY}. We view the representation space $V_n(s_{\overline{n}})$
of the link pattern tower with $v=1$ as the following relative version of 
Roger's and Yang's \cite[Def. 2.3]{RY} skein algebra of arcs and links on 
$\Sigma=\mathbb{D}^*$ with puncture $V:=\{0\}$. In the relative version we consider, besides the puncture, also the set $\{2\xi_n^{j-1} \,\, | \,\, 1\leq j\leq n\}$ of $n$ marked points on the outer boundary of $\mathbb{D}^*$. The associated relative skein module $\mathcal{M}_n$ is a quotient of the vector space generated by the isotopy classes of framed arcs and links in $\mathbb{D}^*\times [0,1]$ such that each pole 
$\{2\xi_{n}^{j-1}\}\times [0,1]$ is met by exactly one endpoint ($1\leq j\leq n$), and 
multiple endpoints may connect to the internal pole $\{0\}\times [0,1]$ at different heights.
The quotient is the linear and transitive closure of the Kauffman skein relation \eqref{kauffman}, the nullhomotopic loop removal relation \eqref{loopremoval}, the noncontractible loop removal relation
\eqref{nonloopremoval} for $v=1$ with the hole shrunk to the puncture (it is called the puncture-framing relation in \cite{RY}), 
%the Dehn twist removal relation \eqref{Dehnremoval} 
%for $v=1$ (with the hole shrunk to a puncture) 
and finally the Roger-Yang {\it puncture-skein relation} 
\begin{equation}\label{psr}
\begin{pspicture}[shift=-1.1](-1.5,-1.25)(1.5,1.25)
	\SpecialCoor
	\pscircle[fillstyle=solid,fillcolor=diskin,linewidth=0.8pt,linestyle=dotted, ](0,0){1}
	\degrees[8]
	\psline[linecolor=line,linewidth=1.2pt](1;0)(0;0)
	\psline[linecolor=line,linewidth=1.2pt](1;4)(0.25;4)	
	\psdot[dotstyle=asterisk,dotscale=1.5](0,0)
\end{pspicture}
 =  t^{\frac{1}{8}}\left( t^{\frac{1}{8}}
 \begin{pspicture}[shift=-1.1](-1.5,-1.25)(1.5,1.25)
	\SpecialCoor
	\pscircle[fillstyle=solid,fillcolor=diskin,linewidth=0.8pt,linestyle=dotted, ](0,0){1}
	\degrees[8]
	\psdot[dotstyle=asterisk,dotscale=1.5](0,0)
	\pscurve[linecolor=line,linewidth=1.2pt](1;0)(0.9;0)(0.7;1.5)(0.7;2.5)(0.9;4)(1;4)
\end{pspicture}
+  t^{-\frac{1}{8}}
 \begin{pspicture}[shift=-1.1](-1.5,-1.25)(1.5,1.25)
	\SpecialCoor
	\pscircle[fillstyle=solid,fillcolor=diskin,linewidth=0.8pt,linestyle=dotted, ](0,0){1}
	\degrees[8]
	\psdot[dotstyle=asterisk,dotscale=1.5](0,0)
	\pscurve[linecolor=line,linewidth=1.2pt](1;0)(0.9;0)(0.7;6.5)(0.7;5.5)(0.9;4)(1;4)
\end{pspicture}
\right )
\end{equation}
where at the left, the right curve lies above the left when meeting at the internal pole $\{0\}\times [0,1]$ (the parameters 
$v,q^{\frac{1}{2}}$ in \cite{RY} is set to the specific values $t^{-\frac{1}{8}},t^{-\frac{1}{8}}$ to match up with our conventions). Then we have a natural linear isomorphism
$V_n(s_{\overline{n}})\simeq \mathcal{M}_n$ such that 
\[
\begin{pspicture}[shift=-1.1](-1.5,-1.25)(1.5,1.25)
	\SpecialCoor
	\pscircle[fillstyle=ccslope,slopebegin=diskin,slopeend=diskout,linewidth=1.2pt,linestyle=dotted, ](0,0){1}
	\pscircle[fillstyle=solid, fillcolor=white, linewidth=1.2pt](0,0){0.35}
	\degrees[8]
	\psline[linecolor=line,linewidth=1.2pt](1;0)(0.35;0)
	\psline[linecolor=line,linewidth=1.2pt](1;4)(0.35;4)
%	\rput{0}(1.2;0){\tiny $1$}
%	\rput{0}(1.2;4){\tiny $2$}
	\rput{0}(0;2){\small $U$}
\end{pspicture}
\]
in $V_{2k}(s_{\overline{2k}})$
corresponds to the left hand side of \eqref{psr} in $\mathcal{M}_{2k}$. With this identification our graded algebra structure on
\[
\mathcal{A}=\bigoplus_{n=0}^{\infty}V_n(s_{\overline{n}})\simeq
\bigoplus_{n=0}^{\infty}\mathcal{M}_n
\]
is the natural relative version of the skein algebra multiplication (cf. the definition of the tensor functor
$\times_{\mathcal{S}}$ from Section \ref{section3}). Note that under this identification 
\eqref{RIIprime} is one of the Reidemeister II' relations from \cite{RY}, and the proof of Proposition
\ref{RYconnection} is a direct generalization of the proof of \cite[Thm. 2.4]{RY}. In fact, 
$\mathcal{M}_0\simeq\mathbb{C}$ is the Roger-Yang skein algebra of arcs and links on $\mathbb{D}^*$ 
with puncture $V=\{0\}$. 

By Corollary \ref{corRY}, we can describe $\phi_{n}$ on $\mathcal{M}_{n}$
as inserting an arc connecting the punctured cylinder to the pole $\{0\}\times [0,1]$ that passes underneath all other arcs and links and then modding out by the Kauffman skein, loop removal and non-contractible loop removal relations, as well as the puncture-skein relation.
\end{remark}

%%%%%%%%%%%%%%%%%%%%%%%%%%%%%%%%%%%
\section{The link pattern tower and fusion}\label{section9}
%%%%%%%%%%%%%%%%%%%%%%%%%%%%%%%%%%%%%%%
The algebra maps $\epsilon_{n,m}$ (see Corollary \ref{algemaps})
are used in \cite{GS} to define the following fusion product of extended affine Temperley-Lieb modules. 
%%%%%%%%%%%%%%%%%%%%%%%%%%%%%%%%%%%
\begin{definition}[\cite{GS}]
The fusion product of a left 
$\textup{TL}_n$-module $M_1$ and a left $\textup{TL}_m$-module $M_2$
is the left $\textup{TL}_{n+m}$-module 
\[
M_1\widehat{\times}_fM_2:=\textup{Ind}^{\epsilon_{n,m}}\bigl(M_1\otimes M_2).
\]
\end{definition}
%%%%%%%%%%%%%%%%%%%%%%%%%%%%%%%%%%%
We will show that the consecutive constituents $V_{2k}(u)$ and $V_{2k+1}(v)$ (respectively
$V_{2k+1}(v)$ and $V_{2k+2}(u)$) in the link pattern tower are naturally related by fusion with 
$\textup{TL}_1$-modules. An important role in the analysis is played by the element
$d_n\in\textup{End}_{\mathcal{S}}(n)$ ($n\geq 1$), defined by
\[
d_{n}:=\epsilon_{n-1,1}(\mathbf{1}_{n-1}\otimes\rho)=\mathbf{1}_{n-1}\times_{\mathcal{S}}\rho
\]
with $\rho\in\textup{End}_{\mathcal{S}}(1)$ given by \eqref{Dehntwistrho}. Note that $d_n$
is the skein class of the $(n,n)$-tangle diagram

\begin{center}
{\psset{unit=.8cm}
 \begin{pspicture}(-2,-2)(2,2)
    	\pscircle[fillstyle=solid,fillcolor=diskin,linewidth=1.2pt](0,0){1.5}
    	\pscircle[fillstyle=solid, fillcolor=white, linewidth=1.2pt](0,0){0.5}
	\degrees[12]
    \rput{-1}(0;0){ \psplot[algebraic, plotpoints=400,linecolor=line,polarplot=true,linewidth=1.2pt]{0}{0.5 Pi mul }{(2/Pi)*ASIN(x/Pi-1) +(3*Pi-x)/(2*Pi)}}
      \rput{-1}(0;0){ \psplot[algebraic, plotpoints=400,linecolor=line,linestyle=dotted,polarplot=true,linewidth=1.2pt]{0.51 Pi mul}{1.7 Pi mul }{(2/Pi)*ASIN(x/Pi-1) +(3*Pi-x)/(2*Pi)}}
        \rput{-1}(0;0){ \psplot[algebraic, plotpoints=400,linecolor=line,polarplot=true,linewidth=1.2pt]{1.71 Pi mul}{2 Pi mul }{(2/Pi)*ASIN(x/Pi-1) +(3*Pi-x)/(2*Pi)}}
          \psdot[linecolor=diskin,dotsize=0.3](0.8;0)
          \psdot[linecolor=diskin,dotsize=0.3](0.8;1)
           \psdot[linecolor=diskin,dotsize=0.3](1.2;-2)
    \psline[linecolor=line,linewidth=1.2pt](1.5;0)(0.5;0)
    \psline[linecolor=line,linewidth=1.2pt](1.5;1)(0.5;1)
    \psline[linecolor=line,linewidth=1.2pt](1.5;-2)(0.5;-2)
    	\rput{0}(1.8;0){\tiny$1$}
	\rput{0}(1.8;1){\tiny$2$}
	\rput{0}(1.8;-2){\tiny$n\!-\!1$}
	\rput{0}(1.8;-1){\tiny$n$}
	\rput{0}(0.3;0){\tiny$1$}
	
   \end{pspicture}  
}
\end{center}
Furthermore, $d_n\in\textup{TL}_n$ is invertible and 
\begin{equation}\label{dintertwiner}
d_{j+1}\circ\mathcal{I}(Z)=Z\times_{\mathcal{S}}\rho=
\mathcal{I}(Z)\circ d_{i+1}\qquad \forall\, Z\in\textup{Hom}_{\mathcal{S}}(i,j).
\end{equation}
In particular, $d_{n}$ lies in the centralizer of $\mathcal{I}_{n-1}(\textup{TL}_{n-1})$ in 
$\textup{TL}_{n}$.
In terms of the algebraic generators of 
$\textup{TL}_n\simeq\textup{End}_{\mathcal{S}}(n)$, the element $d_{n}$ can be expressed as
\[
d_n=(t^{\frac{1}{4}}e_{n-1}+t^{-\frac{1}{4}})\cdots (t^{\frac{1}{4}}e_1+t^{-\frac{1}{4}})\rho.
\]

Consider now the link pattern tower
\[
V_0(u)\overset{\phi_0}{\longrightarrow}V_1(v)\overset{\phi_1}{\longrightarrow}
V_2(u)\overset{\phi_2}{\longrightarrow} V_3(v)\overset{\phi_3}{\longrightarrow}\cdots
\]
where $u=t^{\frac{1}{4}}v+t^{-\frac{1}{4}}v^{-1}$. Consider the surjective intertwiners
\begin{equation*}
\begin{split}
\pi_{2k}: \textup{Ind}^{\mathcal{I}_{2k}}\bigl(V_{2k}(u)\bigr)&\twoheadrightarrow
V_{2k}(u)\widehat{\times}_fV_1(v),\\
\pi_{2k+1}: \textup{Ind}^{\mathcal{I}_{2k+1}}(V_{2k+1}(v))&\twoheadrightarrow
V_{2k+1}(v)\widehat{\times}_fV_1(v^{-1}),
\end{split}
\end{equation*}
of $\textup{TL}_{2k+1}$-modules and $\textup{TL}_{2k+2}$-modules respectively, defined by 
\begin{equation*}
\begin{split}
\pi_{2k}\big(Y\otimes_{\textup{TL}_{2k}}w_{2k})&:=Y\otimes_{\textup{TL}_{2k}\otimes\textup{TL}_1}
(w_{2k}\otimes 1_v),\\
\pi_{2k+1}\bigl(Z\otimes_{\textup{TL}_{2k+1}}w_{2k+1})&:=Z\otimes_{\textup{TL}_{2k+1}\otimes\textup{TL}_1}
(w_{2k+1}\otimes 1_{v^{-1}})
\end{split}
\end{equation*}
for $Y\in\textup{TL}_{2k+1}$, $w_{2k}\in V_{2k}(u)$ and $Z\in\textup{TL}_{2k+2}$, $w_{2k+1}\in V_{2k+1}(v)$.
%%%%%%%%%%%%%%%%%%%%%%%%%%%%%%%%%%%%%%%
\begin{proposition} Let $u=t^{\frac{1}{4}}v+t^{-\frac{1}{4}}v^{-1}$. Let 
\[
V_0(u)\overset{\phi_0}{\longrightarrow}V_1(v)\overset{\phi_1}{\longrightarrow}
V_2(u)\overset{\phi_2}{\longrightarrow} V_3(v)\overset{\phi_3}{\longrightarrow}\cdots
\]
be the link pattern tower. Then the intertwiner $\widehat{\phi}_n$ factors through $\pi_n$. In other words,
there exist unique intertwiners
\begin{equation*}
\begin{split}
\psi_{2k}:\, &V_{2k}(u)\widehat{\times}_fV_1(v)\longrightarrow V_{2k+1}(v),\\
\psi_{2k+1}:\, &V_{2k+1}(v)\widehat{\times}_fV_1(v^{-1})\longrightarrow V_{2k+2}(u)
\end{split}
\end{equation*}
of $\textup{TL}_{2k+1}$-modules and $\textup{TL}_{2k+2}$-modules respectively,
such that $\psi_n\circ\pi_n=\widehat{\phi}_n$ for all $n\in\mathbb{Z}_{\geq 0}$.
\end{proposition}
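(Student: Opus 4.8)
The plan is to compute $\ker\pi_n$ explicitly and to check that it is contained in $\ker\widehat{\phi}_n$; granting this, $\widehat{\phi}_n$ descends along the surjection $\pi_n$ to a (necessarily unique) intertwiner $\psi_n$ with $\psi_n\circ\pi_n=\widehat{\phi}_n$, and $\psi_n$ is an intertwiner because $\widehat{\phi}_n$ is. Since $\mathcal{I}_n=\epsilon_{n,1}(-\otimes\mathbf{1}_1)$ and the second tensor factor in each of the fusion products is one-dimensional, with $\rho$ acting by the scalar $v$ when $n$ is even and by $v^{-1}$ when $n$ is odd, the map $\pi_n$ is exactly the further quotient of $\textup{Ind}^{\mathcal{I}_n}(V_n)$ obtained by forcing the element $d_{n+1}=\epsilon_{n,1}(\mathbf{1}_n\otimes\rho)$ from Section~\ref{section9} to act by the scalar $s$, where $s=v$ for $n$ even and $s=v^{-1}$ for $n$ odd. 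Thus $\ker\pi_n$ is the $\textup{TL}_{n+1}$-submodule generated by $\{d_{n+1}\otimes_{\textup{TL}_n}w-s\,\mathbf{1}_{n+1}\otimes_{\textup{TL}_n}w:w\in V_n\}$, and since $\widehat{\phi}_n$ is $\textup{TL}_{n+1}$-linear with $\widehat{\phi}_n(\mathbf{1}_{n+1}\otimes_{\textup{TL}_n}w)=\phi_n(w)$, the required inclusion is equivalent to the single eigenvalue identity
\[
d_{n+1}\cdot\phi_n(w)=s\,\phi_n(w)\quad\text{in }V_{n+1}\qquad(w\in V_n).
\]

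For $n=2k$ this follows at once from \eqref{dintertwiner}: with source object $0$ one has $d_1=\rho$ and $d_{2k+1}\circ\mathcal{I}(Y)=\mathcal{I}(Y)\circ\rho$, so $d_{2k+1}\cdot\phi_{2k}(Y_u)=(\mathcal{I}(Y)\circ\rho)_v=v\,(\mathcal{I}(Y))_v=v\,\phi_{2k}(Y_u)$, using that $\rho$ acts by $v$ on the defining character of $V_{2k+1}(v)$; here $s=v$. For $n=2k+1$, \eqref{dintertwiner} with source object $1$ gives $d_{2k+2}\circ\mathcal{I}(Z)=\mathcal{I}(Z)\circ d_2$ for $Z\in\textup{Hom}_{\mathcal{S}}(1,2k+1)$, hence $d_{2k+2}\cdot\phi_{2k+1}(Z_v)=(\mathcal{I}(Z)\circ d_2\circ U)_u$, and it remains to understand $d_2\circ U$. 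From Proposition~\ref{insertionProp}{\bf (b)} and the generator formula $d_n=(t^{\frac{1}{4}}e_{n-1}+t^{-\frac{1}{4}})\cdots(t^{\frac{1}{4}}e_1+t^{-\frac{1}{4}})\rho$ of Section~\ref{section9} one reads off $d_2=\mathcal{I}_1(\rho^{-1})\rho^2$, and a short skein computation (of the kind producing \eqref{basisaction}) shows that $\rho^2$ acts as the identity on $\textup{Hom}_{\mathcal{S}}(0,2)$; therefore $d_2\circ U=\mathcal{I}(\rho^{-1})\circ U$, and functoriality of $\mathcal{I}$ together with the well-definedness of $\phi_{2k+1}$ (Theorem~\ref{intertwinerstower}{\bf (i)}) yields $(\mathcal{I}(Z)\circ d_2\circ U)_u=(\mathcal{I}(Z\circ\rho^{-1})\circ U)_u=\phi_{2k+1}((Z\circ\rho^{-1})_v)=v^{-1}\phi_{2k+1}(Z_v)$, i.e. the claim with $s=v^{-1}$.

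The main obstacle is the odd case: one cannot simply invoke the eigenvalue relation for $\mathcal{I}_1(\rho)$ from Lemma~\ref{Step12}, because it is the twist element $d_2=\mathbf{1}_1\times_{\mathcal{S}}\rho$, not $\mathcal{I}_1(\rho)=\rho\times_{\mathcal{S}}\mathbf{1}_1$, that governs the passage to the fusion product, and these two genuinely differ in $\textup{TL}_2$. One must therefore either carry out the skein computation of $d_2\circ U$ directly — showing that $d_2\circ U-v^{-1}U$ lies in $\textup{Hom}_{\mathcal{S}}(0,2)\circ(X-u)$, the kernel of the projection to $V_2(u)$, and observing that this error is killed after left composition with $\mathcal{I}(Z)$ since such composition commutes with right composition by $X\in\textup{TL}_0$ — or else establish the clean identity $d_2=\mathcal{I}_1(\rho^{-1})\rho^2$ with $\rho^2$ acting trivially on $\textup{Hom}_{\mathcal{S}}(0,2)$, as sketched above. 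A secondary point needing care is the bookkeeping of the reciprocal scalars $v$ and $v^{-1}$, so that the even rung of the tower fuses with $V_1(v)$ and the odd rung with $V_1(v^{-1})$, consistently with the definition of $\pi_n$.
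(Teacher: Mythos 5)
Your proposal is correct and follows essentially the same route as the paper: both reduce the claim to the eigenvalue identities $d_{2k+1}\phi_{2k}(w)=v\,\phi_{2k}(w)$ and $d_{2k+2}\phi_{2k+1}(w)=v^{-1}\phi_{2k+1}(w)$ (your description of $\ker\pi_n$ is just the paper's ``balancing condition for the second tensor factor'' phrased via the induced module), and both settle the odd case using \eqref{dintertwiner}, the identity $d_2=\mathcal{I}_1(\rho^{-1})\rho^2$ from Proposition \ref{insertionProp}{\bf (b)}, and the triviality of $\rho^2$ on $\textup{Hom}_{\mathcal{S}}(0,2)$. Your final step via $\mathcal{I}(Z)\circ\mathcal{I}_1(\rho^{-1})=\mathcal{I}(Z\circ\rho^{-1})$ and $(Z\circ\rho^{-1})_v=v^{-1}Z_v$ is only a cosmetic rearrangement of the paper's appeal to the proof of Lemma \ref{Step12}.
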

%%%%%%%%%%%%%%%%%%%%%%%%%%%%%%%%%%%%%%%
\begin{proof}
We need to show that 
\begin{equation*}
\begin{split}
\psi_{2k}\bigl(Y\otimes_{\textup{TL}_{2k}\otimes\textup{TL}_1}(w_{2k}\otimes\mathbf{1}_v)\bigr)&:=
Y\phi_{2k}(w_{2k}),\\
\psi_{2k+1}\bigl(Z\otimes_{\textup{TL}_{2k+1}\otimes\textup{TL}_1}(w_{2k+1}\otimes\mathbf{1}_{v^{-1}})\bigr)&:=Z\phi_{2k+1}(w_{2k+1})
\end{split}
\end{equation*}
for $Y\in\textup{TL}_{2k+1}$, $w_{2k}\in V_{2k}(u)$ and $Z\in\textup{TL}_{2k+2}$, $w_{2k+1}\in V_{2k+1}(v)$ are well-defined linear maps. The balancing condition of the tensor product for the first tensor component of the algebra $\textup{TL}_n\otimes\textup{TL}_1$ is respected because of the intertwining properties of $\phi_{2k}$ and $\phi_{2k+1}$. For instance, for $X\in\textup{TL}_{2k}$,
\[
Y\epsilon_{2k,1}(X\otimes\mathbf{1})\phi_{2k}(w_{2k})=Y\mathcal{I}_{2k}(X)\phi_{2k}(w_{2k})=
Y\phi_{2k}(Xw_{2k}).
\]
For the balancing condition of the
tensor product for the second tensor component of $\textup{TL}_n\otimes\textup{TL}_1$
we need to show that
\begin{equation}\label{toprove}
\begin{split}
d_{2k+1}\phi_{2k}(w_{2k})&=v\,\phi_{2k}(w_{2k}),\\
d_{2k+2}\phi_{2k+1}(w_{2k+1})&=v^{-1}\phi_{2k+1}(w_{2k+1})
\end{split}
\end{equation}
 for all $w_{2k}\in V_{2k}(u)$ and $w_{2k+1}\in V_{2k+1}(v)$.
Write $w_{2k}=Y_u$ with $Y\in\textup{Hom}_{\cS}(0,2k)$ and $w_{2k+1}=Z_v$
with $Z\in\textup{Hom}_{\cS}(1,2k+1)$. Then
\[
d_{2k+1}\phi_{2k}(Y_u)=\bigl(d_{2k+1}\mathcal{I}(Y)\bigr)_v=\bigl(\mathcal{I}(Y)\rho\bigr)_v=v\,\phi_{2k}(Y_u),
\]
where we used \eqref{dintertwiner} and the fact that $d_1=\rho\in\textup{TL}_1$ for the second equality. To prove the second equality of \eqref{toprove}, first note that
\[
d_{2k+2}\phi_{2k+1}(Z_v)=\bigl(d_{2k+2}\mathcal{I}(Z)U\bigr)_u
=\bigl(\mathcal{I}(Z)d_2U\bigr)_u
\]
by \eqref{dintertwiner}. Now $d_2=(t^{\frac{1}{4}}e_1+t^{-\frac{1}{4}})\rho=
\mathcal{I}_1(\rho^{-1})\rho^2$ in $\textup{TL}_2$ by Proposition \ref{insertionProp}{\bf (b)}.
Futhermore we have $\rho^2U=U$ in $\textup{Hom}_{\cS}(0,2)$, so we conclude that
\[
d_{2k+2}\phi_{2k+1}(Z_v)=\bigl(\mathcal{I}(Z)\mathcal{I}_1(\rho^{-1})U\bigr)_u=
v^{-1}\bigl(\mathcal{I}(Z)U\bigr)_u=v^{-1}\phi_{2k+1}(Z_v),
\]
where the second step follows from the proof of Lemma \ref{Step12}.
\end{proof}
%%%%%%%%%%%%%%%%%%%%%%%%%%%%%%%%%%%%%%%
\begin{corollary} 
Let $u=t^{\frac{1}{4}}v+t^{-\frac{1}{4}}v^{-1}$ with $v^2\not=t^{\frac{1}{2}}$. Let 
\[
V_0(u)\overset{\phi_0}{\longrightarrow}V_1(v)\overset{\phi_1}{\longrightarrow}
V_2(u)\overset{\phi_2}{\longrightarrow} V_3(v)\overset{\phi_3}{\longrightarrow}\cdots
\]
be the link pattern tower. The left $\textup{TL}_{2k+1}$-module $V_{2k+1}(v)$ is a quotient of 
$V_{2k}(u)\widehat{\times}_fV_1(v)$ and the left $\textup{TL}_{2k+2}$-module $V_{2k+2}(u)$ is a quotient of
$V_{2k+1}(v)\widehat{\times}_fV_1(v^{-1})$ for all $k\geq 0$.
\end{corollary}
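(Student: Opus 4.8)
The plan is to deduce this immediately from the two preceding results: Theorem~\ref{intertwinerstower}{\bf (ii)}, which says that for $v^2\not=t^{\frac{1}{2}}$ the link pattern tower is nondegenerate, i.e. the induced intertwiners $\widehat{\phi}_n\colon \textup{Ind}^{\mathcal{I}_n}(V_n)\rightarrow V_{n+1}$ are all surjective; and the Proposition just above, which produces the factorization $\widehat{\phi}_n=\psi_n\circ\pi_n$ through the surjective intertwiner $\pi_n$. First I would fix $v\in\mathbb{C}^\ast$ with $v^2\not=t^{\frac{1}{2}}$ and $u=t^{\frac{1}{4}}v+t^{-\frac{1}{4}}v^{-1}$, so that both of these inputs apply simultaneously.

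Next, the key step: since $\widehat{\phi}_n=\psi_n\circ\pi_n$ and the left-hand side is surjective, the map $\psi_n$ is surjective as well (a composition that is onto forces the last map to be onto). Specializing to even and odd $n$, $\psi_{2k}\colon V_{2k}(u)\widehat{\times}_fV_1(v)\twoheadrightarrow V_{2k+1}(v)$ is a surjective intertwiner of $\textup{TL}_{2k+1}$-modules and $\psi_{2k+1}\colon V_{2k+1}(v)\widehat{\times}_fV_1(v^{-1})\twoheadrightarrow V_{2k+2}(u)$ is a surjective intertwiner of $\textup{TL}_{2k+2}$-modules. Applying the first isomorphism theorem in the module categories $\mathcal{C}_{\textup{TL}_{2k+1}}$ and $\mathcal{C}_{\textup{TL}_{2k+2}}$ then yields $V_{2k+1}(v)\simeq\bigl(V_{2k}(u)\widehat{\times}_fV_1(v)\bigr)/\ker\psi_{2k}$ and $V_{2k+2}(u)\simeq\bigl(V_{2k+1}(v)\widehat{\times}_fV_1(v^{-1})\bigr)/\ker\psi_{2k+1}$, which is exactly the assertion.

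There is essentially no main obstacle here, since all the real work has already been carried out: the genuinely nontrivial computations were the surjectivity of $\widehat{\phi}_{2k}$ and $\widehat{\phi}_{2k+1}$ (the latter requiring $v^2\not=t^{\frac{1}{2}}$, via the evaluation of $\widehat{\phi}_{2k+1}$ on $e_{k+1}\cdots e_{2k+1}\otimes[C^{(2k+1)}]_v$ yielding the factor $t^{\frac{1}{4}}v^2-t^{\frac{3}{4}}$), together with the well-definedness of $\psi_n$ (the balancing-condition check in the preceding Proposition, resting on \eqref{dintertwiner} and Lemma~\ref{Step12}). The only point I would be careful to state explicitly is that $\pi_n$ is surjective, so that $\widehat{\phi}_n$ surjective genuinely forces $\psi_n$ surjective; this surjectivity of $\pi_n$ is already recorded in the statement of the Proposition.
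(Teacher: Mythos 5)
Your proposal is correct and follows essentially the same route as the paper: nondegeneracy of the tower (Theorem \ref{intertwinerstower}{\bf (ii)}) gives surjectivity of $\widehat{\phi}_n$, and the factorization $\widehat{\phi}_n=\psi_n\circ\pi_n$ from the preceding proposition then forces $\psi_n$ to be surjective, yielding the quotient statement. (A minor remark: surjectivity of $\pi_n$ is not actually needed for this last step, since surjectivity of a composite already implies surjectivity of the outer map.)
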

%%%%%%%%%%%%%%%%%%%%%%%%%%%%%%%%%%%%%%%
\begin{proof}
By Theorem \ref{intertwinerstower}{\bf (ii)} the link pattern tower is nondegenerate,
i.e. the $\widehat{\phi}_n$ are surjective for all $n\in\mathbb{Z}_{\geq 0}$. By the previous
proposition we conclude that the intertwiners $\psi_n$ are surjective for all $n\in\mathbb{Z}_{\geq 0}$.
\end{proof}
%%%%%%%%%%%%%%%%%%%%%%%%%%%%%%%%%%%%%%
\begin{example}
Let $u=t^{\frac{1}{4}}v+t^{-\frac{1}{4}}v^{-1}$ with $v^2\not=t^{\frac{1}{2}}$.
Recall the identification of 
\[
V_{2k}(u)=\mathcal{W}_{0,t^{\frac{1}{4}}v}[2k],\qquad
V_{2k+1}(v)=\mathcal{W}_{\frac{1}{2},v}[2k+1]
\]
with the standard modules from \cite{GS}
(see Remark \ref{standardremark}).
Then \cite[(4.26)]{GS} shows that 
\[
V_1(v)\widehat{\times}_fV_1(v^{-1})\simeq V_2(u).
\]
\end{example}
%%%%%%%%%%%%%%%%%%%%%%%%%%%%%%%%%%%%%
\appendix
\section{Relation to affine Hecke algebras and affine braid groups, and type $B$ presentations}\label{section10}
%%%%%%%%%%%%%%%%%%%%%%%%%%%%%%%%%%%%%
We first show how the algebra maps 
$\mathcal{I}_n$ can be lifted to extended affine Hecke algebras and to the group algebras of extended affine braid groups. We give the constructions below for $n\geq 3$. The adjustments needed for $n=1,2$ are left to the reader as long as they are obvious.

The affine Temperley-Lieb algebra $\overline{\textup{TL}}_n$ of
type $\widehat{A}_{n-1}$ is the subalgebra of $\textup{TL}_n$ generated by $e_1,e_2,\ldots,e_{n}$, see \cite{FG}. The defining relations of $\overline{\textup{TL}}_n$ are given by the first three lines in \eqref{relTL}. Note that $\mathbb{Z}$ acts on $\overline{\textup{TL}}_n$ by 
algebra automorphisms with $m\in\mathbb{Z}$ acting by  $e_i\mapsto e_{i+m}$ (with the indices modulo $n$). Let
$\textup{TL}^e_n$ be the corresponding crossed product algebra $\mathbb{Z}\ltimes\overline{\textup{TL}}_n$. Note that $\textup{TL}^e_n$ is isomorphic to the algebra generated
by $e_1,\ldots, e_{n},\rho^{\pm 1}$ with defining relations all but the last relation in \eqref{relTL}. It follows that
\[
\textup{TL}_n\simeq\textup{TL}_n^e/\langle \rho^2e_{n-1}-e_1e_2\cdots e_{n-1}\rangle
\]
with $\langle \rho^2e_{n-1}-e_1e_2\cdots e_{n-1}\rangle$ the two-sided ideal generated by
$ \rho^2e_{n-1}-e_1e_2\cdots e_{n-1}$.

In \cite{FG} the affine Temperley-Lieb algebra $\overline{\textup{TL}}_n$ is realized as a quotient of the affine Hecke algebra of type $\widehat{A}_{n-1}$. We recall this here, and give the extension to $\textup{TL}_n^e$.

%%%%%%%%%%%%%%%%%%%%%%%%%%%%%%%%%%
\begin{definition}
The {\textup{extended affine Hecke algebra}} $H_n$ of type $\widehat{A}_{n-1}$ is the unital complex associative algebra with generators $T_1,T_2,\ldots,T_{n},\rho,\rho^{-1}$ and defining relations
\begin{equation}\label{relH}
\begin{split}
	 &(T_i-t^{-\frac{1}{2}})(T_i+t^{\frac{1}{2}}) = 0,\\
	 &T_iT_j = T_jT_i\qquad\qquad\quad \hbox{ if }\,\, i-j\not=\pm 1,\\
	 &T_iT_{i+1}T_i=T_{i+1}T_iT_{i+1},\\
	 &\rho T_i =  T_{i+1} \rho, \\
	&\rho\rho^{-1}=1=\rho^{-1}\rho,
\end{split}
\end{equation}
where the indices are taken modulo $n$. 
\end{definition}
%%%%%%%%%%%%%%%%%%%%%%%%%%%%%%%%%%%
Note that $T_i\in H_n$ is invertible with inverse
$T_i^{-1}=T_i-t^{-\frac{1}{2}}+t^{\frac{1}{2}}$. The affine Hecke algebra of type 
$\widehat{A}_{n-1}$ is the subalgebra $\overline{H}_n$ of $H_n$ generated by  
$T_1,T_2,\ldots,T_{n}$. The defining relations of $\overline{H}_n$ are given by the first three lines in \eqref{relH}. The extended affine Hecke algebra $H_n$ is isomorphic to the crossed product algebra
$\mathbb{Z}\ltimes\overline{H}_n$, where $m\in\mathbb{Z}$ acts on 
$\overline{H}_n$
by the algebra automorphism $T_i\mapsto T_{i+m}$ (with the indices modulo $n$).
%%%%%%%%%%%%%%%%%%%%%%%%%%%%%%%%%%%%%%%%%
\begin{proposition}
There exists a unique surjective algebra map
$\psi_n: H_n\rightarrow \textup{TL}_n^e$ satisfying $\rho\mapsto \rho$ and $T_i\mapsto e_i+t^{-\frac{1}{2}}$. The kernel of $\psi_n$ is the two-sided ideal in $H_n$ generated by the elements
\begin{equation}\label{kernel}
T_iT_{i+1}T_i-t^{-\frac{1}{2}}T_iT_{i+1}-t^{-\frac{1}{2}}T_{i+1}T_i+t^{-1}T_i+t^{-1}T_{i+1}-t^{-\frac{3}{2}},\qquad
i\in\mathbb{Z}/n\mathbb{Z}.
\end{equation}
\end{proposition}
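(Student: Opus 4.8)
The plan is first to establish existence of $\psi_n$ by checking that the assignment $T_i\mapsto e_i+t^{-\frac12}$ and $\rho^{\pm1}\mapsto\rho^{\pm1}$ respects the defining relations \eqref{relH} of $H_n$; uniqueness is then automatic since $T_1,\dots,T_n,\rho^{\pm1}$ generate $H_n$. Writing $\widetilde{e}_i:=e_i+t^{-\frac12}\in\textup{TL}_n^e$, the quadratic relation follows from $(\widetilde{e}_i-t^{-\frac12})(\widetilde{e}_i+t^{\frac12})=e_i^2+(t^{\frac12}+t^{-\frac12})e_i=0$, the commutation relations for the $\widetilde{e}_i$ are inherited from those of the $e_i$, and $\rho\widetilde{e}_i=\widetilde{e}_{i+1}\rho$, $\rho\rho^{-1}=1=\rho^{-1}\rho$ are immediate. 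The braid relation needs only a short computation: expanding $\widetilde{e}_i\widetilde{e}_{i+1}\widetilde{e}_i$ and reducing with $e_ie_{i+1}e_i=e_i$ and $e_i^2=-(t^{\frac12}+t^{-\frac12})e_i$ yields $t^{-\frac12}e_ie_{i+1}+t^{-\frac12}e_{i+1}e_i+t^{-1}e_i+t^{-1}e_{i+1}+t^{-\frac32}$, which is symmetric under $i\leftrightarrow i+1$ and hence equals $\widetilde{e}_{i+1}\widetilde{e}_i\widetilde{e}_{i+1}$. Surjectivity is clear, since $e_i=\psi_n(T_i-t^{-\frac12})$ and $\rho^{\pm1}=\psi_n(\rho^{\pm1})$ lie in the image and generate $\textup{TL}_n^e$.

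For the containment of the two-sided ideal $J$ generated by the elements \eqref{kernel} in $\ker\psi_n$ I would record the underlying identity: substituting $T_i=e_i+t^{-\frac12}$ in the free algebra and reducing \emph{only} modulo the quadratic relations $e_i^2=-(t^{\frac12}+t^{-\frac12})e_i$, the element \eqref{kernel} becomes $e_ie_{i+1}e_i-e_i$. Applying $\psi_n$ and then the relation $e_ie_{i+1}e_i=e_i$ valid in $\textup{TL}_n^e$ shows that the image under $\psi_n$ of each element \eqref{kernel} vanishes, so $J\subseteq\ker\psi_n$ and $\psi_n$ descends to $\overline{\psi}_n\colon H_n/J\to\textup{TL}_n^e$.

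The main point is the reverse inclusion, which I would prove by exhibiting an inverse to $\overline{\psi}_n$. In $H_n/J$ set $\bar e_i:=\bar T_i-t^{-\frac12}$. The quadratic relation gives $\bar e_i^2=-(t^{\frac12}+t^{-\frac12})\bar e_i$; the commutation relations give $\bar e_i\bar e_j=\bar e_j\bar e_i$ for $i-j\not\equiv\pm1$; and $\bar\rho\bar e_i=\bar e_{i+1}\bar\rho$ together with invertibility of $\bar\rho$ hold by construction. By the identity above, the vanishing of \eqref{kernel} in $H_n/J$ reads $\bar e_i\bar e_{i+1}\bar e_i=\bar e_i$. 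Furthermore the braid relation $T_iT_{i+1}T_i=T_{i+1}T_iT_{i+1}$, both sides of which reduce modulo the quadratic relations to $e_ie_{i+1}e_i-e_i+S$ resp.\ $e_{i+1}e_ie_{i+1}-e_{i+1}+S$ with the same symmetric term $S=t^{-\frac12}e_ie_{i+1}+t^{-\frac12}e_{i+1}e_i+t^{-1}e_i+t^{-1}e_{i+1}+t^{-\frac32}$, forces in addition $\bar e_{i+1}\bar e_i\bar e_{i+1}-\bar e_{i+1}=\bar e_i\bar e_{i+1}\bar e_i-\bar e_i=0$. Since the indices run over $\mathbb Z/n\mathbb Z$, this produces both families $\bar e_i\bar e_{i\pm1}\bar e_i=\bar e_i$. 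Hence the assignment $e_i\mapsto\bar e_i$, $\rho^{\pm1}\mapsto\bar\rho^{\pm1}$ respects all defining relations of $\textup{TL}_n^e$ (namely all but the last relation in \eqref{relTL}) and defines an algebra map $\theta\colon\textup{TL}_n^e\to H_n/J$. Checking on generators gives $\overline{\psi}_n\circ\theta=\mathrm{id}$ and $\theta\circ\overline{\psi}_n=\mathrm{id}$, so $\overline{\psi}_n$ is an isomorphism and $\ker\psi_n=J$.

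I expect the bookkeeping in the braid-relation expansion and in the reduction of \eqref{kernel} to be the only real computational hurdle; the conceptual content — that the single family \eqref{kernel}, together with the symmetric Hecke braid relation already present in $H_n$, already forces the second Temperley--Lieb relation $e_{i+1}e_ie_{i+1}=e_{i+1}$ — is precisely what makes the stated generating set of $\ker\psi_n$ correct, in parallel with the classical realization of the (affine) Temperley--Lieb algebra as a quotient of the (affine) Hecke algebra (cf.\ \cite{FG}).
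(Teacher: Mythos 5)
Your proof is correct, but it follows a genuinely different route from the paper's. The paper's proof is two lines: it quotes Fan and Green's theorem that the kernel of the non-extended map $\overline{\psi}_n:\overline{H}_n\rightarrow\overline{\textup{TL}}_n$ is generated by the elements \eqref{kernel}, and then passes to the extended algebras by observing that $H_n\simeq\mathbb{Z}\ltimes\overline{H}_n$ and $\textup{TL}_n^e\simeq\mathbb{Z}\ltimes\overline{\textup{TL}}_n$ with $\overline{\psi}_n$ intertwining the $\mathbb{Z}$-actions (which also permute the generators \eqref{kernel} cyclically, so the ideal they generate in $H_n$ is the $\mathbb{Z}$-stable extension of the ideal in $\overline{H}_n$). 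You instead give a self-contained presentation-matching argument: after the computation showing that \eqref{kernel} reduces, modulo the quadratic relations alone, to $e_ie_{i+1}e_i-e_i$, you build a two-sided inverse $\theta$ to the induced map $H_n/J\rightarrow\textup{TL}_n^e$, using the Hecke braid relation to extract the second cubic family $\bar e_{i+1}\bar e_i\bar e_{i+1}=\bar e_{i+1}$ from the first. That step, together with the cyclic indexing over $\mathbb{Z}/n\mathbb{Z}$, is exactly what makes the generating set \eqref{kernel} suffice, and your verification of it is complete. What your approach buys is independence from the cited reference (you effectively reprove the Fan--Green result directly in the extended setting); what the paper's approach buys is brevity and the conceptual point that everything is equivariant for the rotation action, so the extended statement is a formal consequence of the finite/affine one. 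One minor caveat you share with the paper: the computations implicitly assume $n\geq 3$ (for $n=2$ the relations of $\textup{TL}_2^e$ and the indexing modulo $2$ degenerate), which the paper flags at the start of the appendix.
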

%%%%%%%%%%%%%%%%%%%%%%%%%%%%%%%%%%%%%%%%
\begin{proof}
Fan and Green \cite{FG} showed that the kernel of the unique surjective algebra map 
$\overline{\psi}_n: \overline{H}_n\rightarrow\overline{\textup{TL}}_n$ satisfying
$T_i\mapsto e_i+t^{-\frac{1}{2}}$ is generated by
the elements
\eqref{kernel} (see also \cite{GL2}). The proposition now follows since the $\mathbb{Z}$-actions on $\overline{H}_n$
and $\overline{\textup{TL}}_n$ are intertwined by $\overline{\psi}_n$.
\end{proof}
%%%%%%%%%%%%%%%%%%%%%%%%%%%%%%%%%%%%%%%%

The extended affine braid group
$\mathcal{B}_n$ is the group generated by $\sigma_1,\sigma_2,\ldots,\sigma_{n},\widetilde{\rho}$ with defining relations
\begin{equation}\label{relB}
\begin{split}
	 &\sigma_i\sigma_j = \sigma_j\sigma_i\qquad\qquad\quad \hbox{ if }\,\, i-j\not=\pm 1,\\
	 &\sigma_i\sigma_{i+1}\sigma_i=\sigma_{i+1}\sigma_i\sigma_{i+1},\\
	 &\widetilde{\rho} \sigma_i =  \sigma_{i+1} \widetilde{\rho}, 
\end{split}
\end{equation}
where the indices are taken modulo $n$, see e.g. \cite{GL2}. Recall that $\mathcal{B}_n$
can be realized topologically in terms of $n$ strands in $\mathbb{C}^\ast\times [0,1]$ starting at
$\{(2\xi_n^{j-1},0)\}_{j=1}^n$ and ending at $\{(2\xi_n^{j-1},1)\}_{j=1}^n$,\\

\begin{align*}
\psset{unit=0.45cm}
\sigma_i = 
\begin{pspicture}[shift=-4](8,9.5)
	\pssavepath[linecolor=white]{circle}{\psarc(4,8){3.025}{180}{0}}
        \SpecialCoor
        \pnode(4,8){O}
        \pssavepath{line1}{\psline(O)([angle=-135,nodesep=3.5]O)}
        \pssavepath{line2}{\psline(O)([angle=-110,nodesep=3.3]O)}
        \pssavepath{line3}{\psline(O)([angle=-70,nodesep=3.3]O)}
        \pssavepath{line4}{\psline(O)([angle=-45,nodesep=3.3]O)}
        \psintersect[showpoints,name=p1]{circle}{line1}
        \psintersect[showpoints,name=p2]{circle}{line2}
        \psintersect[showpoints,name=p3]{circle}{line3}
        \psintersect[showpoints,name=p4]{circle}{line4}
        \pssavepath[linewidth=0.01pt,linecolor=white]{line}{\psline([angle=-90,nodesep=5]p11)([angle=90,nodesep=2]p11)([angle=90,nodesep=2.5]p21)([angle=-90,nodesep=4.2]p21)([angle=-90,nodesep=4.2]p31)([angle=90,nodesep=2.5]p31)([angle=90,nodesep=2]p41)([angle=-90,nodesep=5]p41)}
	\psframe*[linecolor=diskin](1,2)(7,8)
	\psellipse*[linecolor=diskin](4,8)(3,1)
	\psellipse*[linecolor=diskin](4,2)(3,1)
	\psline[linewidth = 1.2pt](1,2)(1,8)
	\pssavepath[linewidth=1.2pt]{top}{\psellipse[linewidth=1.2pt](4,8)(3.025,1)}
	\pssavepath[linewidth=1.2pt]{bottom}{\psellipticarc[linewidth=1.2pt](4,2)(3.025,1){180}{0}}
	\psellipticarc[linestyle=dotted,linewidth=1.2pt](4,2)(3,1){0}{180}
	\psline[linewidth=1.2pt](7,2)(7,8)
	\psline[linewidth=1.2pt,linestyle=dashed,dash=3pt 1pt](4,2)(4,8)
  	\psintersect[name=ti]{top}{line}
	\psintersect[name=bi]{bottom}{line}
	\psline[linecolor= line,linewidth=1.2pt](ti1)(bi1)
	\psline[linecolor= line,linewidth=1.2pt](ti4)(bi4)
	\pssavepath[linecolor= line,linewidth=1.2pt]{under}{\pscurve[linecolor= line,linewidth=1.2pt](ti2)([angle=-90,nodesep=0.5]ti2)([angle=90,nodesep=0.5]bi3)(bi3)}
	\pssavepath[linecolor= diskin,linewidth=0.1pt]{over}{\pscurve[linecolor= line,linewidth=1.2pt](ti3)([angle=-90,nodesep=0.5]ti3)([angle=90,nodesep=0.5]bi2)(bi2)}
	\psintersect[name=x]{over}{under}
	\psdot[linecolor=diskin,dotsize=0.5](x1)
	\pscurve[linecolor= line,linewidth=1.2pt](ti3)([angle=-90,nodesep=0.5]ti3)([angle=90,nodesep=0.5]bi2)(bi2)
	\rput{0}([angle=-90,nodesep=0.5]bi1){\tiny $i\!-\!1$}
	\rput{0}([angle=90,nodesep=0.5]ti1){\tiny $\,i\!-\!1$}	
	\rput{0}([angle=-90,nodesep=0.5]bi2){\tiny $i$}
	\rput{0}([angle=90,nodesep=0.5]ti2){\tiny $i$}
	\rput{0}([angle=-90,nodesep=0.5]bi3){\tiny $i\!+\!1\,\,$}
	\rput{0}([angle=90,nodesep=0.5]ti3){\tiny $i\!+\!1\,\,$}
	\rput{0}([angle=-90,nodesep=0.5]bi4){\tiny $i\!+\!2$}
	\rput{0}([angle=90,nodesep=0.5]ti4){\tiny $i\!+\!2$}
	\end{pspicture}
&& \psset{unit=0.45cm}
\widetilde{\rho} =
	\begin{pspicture}[shift=-4](8,9.5)
	\pssavepath[linecolor=white]{circle}{\psarc(4,8){3.025}{180}{0}}
        \SpecialCoor
        \pnode(4,8){O}
        \pssavepath{line1}{\psline(O)([angle=-135,nodesep=3.5]O)}
        \pssavepath{line2}{\psline(O)([angle=-105,nodesep=3.3]O)}
        \pssavepath{line3}{\psline(O)([angle=-75,nodesep=3.3]O)}
        \pssavepath{line4}{\psline(O)([angle=-45,nodesep=3.3]O)}
        \psintersect[showpoints,name=p1]{circle}{line1}
        \psintersect[showpoints,name=p2]{circle}{line2}
        \psintersect[showpoints,name=p3]{circle}{line3}
        \psintersect[showpoints,name=p4]{circle}{line4}
        \pssavepath[linewidth=0.01pt,linecolor=white]{line5}{\psline([angle=-90,nodesep=5]p11)([angle=90,nodesep=2]p11)([angle=90,nodesep=2.5]p21)([angle=-90,nodesep=4.2]p21)([angle=-90,nodesep=4.2]p31)([angle=90,nodesep=2.5]p31)([angle=90,nodesep=2]p41)([angle=-90,nodesep=5]p41)}
         \pssavepath[linewidth=0.01pt,linecolor=white]{linefor3}{\psline([angle=-90,nodesep=4.2]p31)([angle=90,nodesep=2.5]p31)}
	\psframe*[linecolor=diskin](1,2)(7,8)
	\psellipse*[linecolor=diskin](4,8)(3,1)
	\psellipse*[linecolor=diskin](4,2)(3,1)
	\psline[linewidth = 1.2pt](1,2)(1,8)
	\pssavepath[linewidth=1.2pt]{top2}{\psellipse[linewidth=1.2pt](4,8)(3.025,1)}
	\pssavepath[linewidth=1.2pt]{bottom}{\psellipticarc[linewidth=1.2pt](4,2)(3.025,1){180}{0}}
	\psellipticarc[linestyle=dotted,linewidth=1.2pt](4,2)(3,1){0}{180}
	\psline[linewidth=1.2pt](7,2)(7,8)
	\psline[linewidth=1.2pt,linestyle=dashed,dash =3pt 1pt](4,2)(4,8)
  	\psintersect[name=t2i]{top2}{line5}
	\psintersect[name=t3i]{top2}{linefor3}
	\psintersect[name=bi]{bottom}{line5}
	\pscurve[linecolor= line,linewidth=1.2pt](t2i2)([angle=-90,nodesep=0.5]t2i2)([angle=90,nodesep=0.5]bi3)(bi3)
	\pscurve[linecolor= line,linewidth=1.2pt](t2i1)([angle=-90,nodesep=0.5]t2i1)([angle=90,nodesep=0.5]bi2)(bi2)
	\pscurve[linecolor= line,linewidth=1.2pt]([angle=90,nodesep=6]bi3)([angle=90,nodesep=5.5]bi3)([angle=90,nodesep=0.5]bi4)(bi4)
	\rput{0}([angle=90,nodesep=0.5]t2i1){\tiny $\,i\!-\!1$}	
	\rput{0}([angle=-90,nodesep=0.5]bi2){\tiny $i$}
	\rput{0}([angle=90,nodesep=0.5]t2i2){\tiny $i$}
	\rput{0}([angle=-90,nodesep=0.5]bi3){\tiny $i\!+\!1$}
	\rput{0}([angle=89,nodesep=6.5]bi3){\tiny $i\!+\!1$}
	\rput{0}([angle=-90,nodesep=0.5]bi4){\tiny $i\!+\!2$}
	\end{pspicture}
\end{align*}

Given a braid in $\mathcal{B}_n$, project it onto the
cylinder $C_o\times [0,1]$ and map $C_o\times [0,1]$ homeomorphically onto $A$ by collapsing
the wall of the cylinder inwards onto $A\times\{0\}$. This results in an $(n,n)$-tangle diagram in $A$,
which we subsequently interpret as an element in the linear skein $\textup{End}_\cS(n)$.
This defines a surjective algebra map $\mu_n: \mathbb{C}[\mathcal{B}_n]
\rightarrow \textup{End}_\cS(n)$ satisfying
%%%%%%%%%%%%%%%%%%%%%%%%%%%%%%%%%%%%%%
			
	\begin{equation*}		
			\mu_n(\sigma_i) =
			\begin{pspicture}[shift=-1.4](-1.5,-1.5)(1.5,1.5)
       \definecolor{diskmid}{gray}{0.8}
    	\pscircle[fillstyle=ccslope,slopebegin=diskin,slopeend=diskout,linewidth=1.2pt](0,0){1.25}
    	\pscircle[fillstyle=solid, fillcolor=white, linewidth=1.2pt](0,0){0.5}
    	\degrees[18]
	\rput{0}(1.5;0){\tiny $1$}
	\rput{0}(0.25;0){\tiny $1$}
     	\rput{0}(1.5;12){\tiny $i$}
	\rput{0}(1.5;13){\tiny $i\!+\!1$}
	 \rput{0}(0.25;11.5){\tiny $i$}
	   \psline[linecolor=line,linewidth=1.2pt](0.5;0)(1.25;0)
	\psline[linecolor=line,linewidth=1.2pt](0.5;11)(1.25;11)
	\psline[linecolor=line,linewidth=1.2pt](0.5;14)(1.25;14)
	\psline[linecolor=line,linewidth=1.2pt](0.5;15)(1.25;15)
	\pscurve[linecolor=line,linewidth=1.2pt](1.25;13)(1.15;13)(0.875;12.5)(0.6;12)(0.5;12)
	\psdot[linecolor=diskmid,dotsize=0.3](0.875;12.5)
	\pscurve[linecolor=line,linewidth=1.2pt](1.25;12)(1.15;12)(0.875;12.5)(0.6;13)(0.5;13)
	\psarc[linestyle=dotted, linecolor=line,linewidth=1.2pt](0,0){0.875}{0.5}{10}
	\psarc[linestyle=dotted, linecolor=line,linewidth=1.2pt](0,0){0.875}{16}{17.5}
   \end{pspicture}
	 \qquad
	\mu_n(\widetilde{\rho})= 
	\begin{pspicture}[shift=-1.4](-1.5,-1.5)(1.5,1.5)
    	\pscircle[fillstyle=ccslope,slopebegin=diskin,slopeend=diskout,linewidth=1.2pt](0,0){1.25}
    	\pscircle[fillstyle=solid, fillcolor=white, linewidth=1.2pt](0,0){0.5}
    	\degrees[16]
     	\rput{0}(1.5;0){\tiny $1$}
	\rput{0}(1.5;1){\tiny $2$}
     	\rput{0}(0.25;0.5){\tiny $1$}
	\rput{0}(0.25;14){\tiny $n$}
	\pscurve[linecolor=line,linewidth=1.2pt](1.25;0)(1.15;0)(0.85;15.5)(0.6;15)(0.5;15)
	\pscurve[linecolor=line,linewidth=1.2pt](1.25;1)(1.15;1)(0.85;0.5)(0.6;0)(0.5;0)
	\psarc[linestyle=dotted, linecolor=line,linewidth=1.2pt](0,0){0.85}{2}{14}
   \end{pspicture}
   \end{equation*}
   %%%%%%%%%%%%%%%%%%%%%%%%%%%%%%%
Note that $\mu_n(\widetilde{\rho})=\rho$ and 
$\mu_n(\sigma_i)=t^{\frac{1}{4}}e_i+t^{-\frac{1}{4}}$, where the last equality follows from the Kauffman skein relation \eqref{kauffman}. Note also that 
$\mu_n(\sigma_i^{-1})=t^{-\frac{1}{4}}e_i+t^{\frac{1}{4}}$.

%%%%%%%%%%%%%%%%%%%%%%%%
\begin{remark}
Let $\nu_n: \mathbb{C}[\mathcal{B}_n]\rightarrow H_n$ be the surjective algebra map 
satisfying $\nu_n(\widetilde{\rho})=\rho$
and $\nu_n(\sigma_i)=t^{\frac{1}{4}}T_i$, then we have $\psi_n\circ\nu_n=\mu_n$.
\end{remark}
%%%%%%%%%%%%%%%%%%%%%%%%%
Let $\mathcal{I}^{br}_n: \mathcal{B}_n\rightarrow\mathcal{B}_{n+1}$ be the group
homomorphism that topologically is described by sticking in an additional strand between the $n$th and
the first strand, with the new strand running ``behind" all other strands (but not wrapping around the pole).
For example,

\begin{align*}
\psset{unit=0.45cm}
\begin{pspicture}[shift=-4.5](8,9.5)
	\psframe*[linecolor=diskin](1,2)(7,8)
	\psellipse*[linecolor=diskin](4,8)(3,1)
	\psellipse*[linecolor=diskin](4,2)(3,1)
	\psline[linewidth = 1.2pt](1,2)(1,8)
	\pssavepath[linewidth=1.2pt]{top}{\psellipse[linewidth=1.2pt](4,8)(3.025,1)}
	\pssavepath[linewidth=1.2pt]{bottom}{\psellipticarc[linewidth=1.2pt](4,2)(3.025,1){180}{0}}
	\psellipticarc[linestyle=dotted,linewidth=1.2pt](4,2)(3,1){0}{180}
	\psline[linewidth=1.2pt](7,2)(7,8)
	\psline[linewidth=1.2pt,linestyle=dashed,dash=3pt 1pt](4,2)(4,8)
  	\SpecialCoor
	\pnode(7,8){t1}
	\pnode(1,8){t2}
	\pnode(7,2){b1}
	\pnode(1,2){b2}
	\pssavepath[linecolor= line,linewidth=1.2pt]{under}{\pscurve[linecolor= line,linewidth=1.2pt](t2)([angle=-85,nodesep=0.5]t2)([angle=95,nodesep=0.5]b1)(b1)}
	\pssavepath[linecolor= diskin,linewidth=0.1pt]{over}{\pscurve[linecolor= line,linewidth=1.2pt](t1)([angle=-95,nodesep=0.5]t1)([angle=85,nodesep=0.5]b2)(b2)}
	\psintersect[name=x]{over}{under}
	\psdot[linecolor=diskin,dotsize=0.5](x1)
	\pscurve[linecolor= line,linewidth=1.2pt](t1)([angle=-95,nodesep=0.5]t1)([angle=85,nodesep=0.5]b2)(b2)
	\rput{0}([angle=90,nodesep=0.8]t1){\tiny $1$}
	\rput{0}([angle=90,nodesep=0.8]t2){\tiny $2$}
	\rput{0}([angle=-90,nodesep=0.8]b1){\tiny $1$}
	\rput{0}([angle=-90,nodesep=0.8]b2){\tiny $2$}
	\end{pspicture}
\overset{\mathcal{I}^{br}_2}{\longmapsto}
	\begin{pspicture}[shift=-4.5](8,9.5)
	\pssavepath[linecolor=white,linewidth=0.01pt]{insert}{\psline(5,1)(5,8)}
	\pssavepath[linecolor=white,linewidth=0.01pt]{two}{\psline(2.5,2)(2.5,9)}
	\pssavepath[linecolor=white,linewidth=0.01pt]{top}{\psellipse[linewidth=1.2pt](4,8)(3.025,1)}
	\pssavepath[linecolor=white,linewidth=0.01pt]{left}{\psline(1,2)(1,8)}
	\psframe*[linecolor=diskin](1,2)(7,8)
	\psellipse*[linecolor=diskin](4,8)(3,1)
	\psellipse*[linecolor=diskin](4,2)(3,1)
	\pssavepath[linewidth = 1.2pt]{left}{\psline(1,2)(1,8)}
	\pssavepath[linewidth=1.2pt]{bottom}{\psellipticarc[linewidth=1.2pt](4,2)(3.025,1){180}{0}}
	\pssavepath[linestyle=dotted,linewidth=1.2pt]{bottomback}{\psellipticarc(4,2)(3,1){0}{180}}
	\pssavepath[linewidth=1.2pt]z{right}{\psline(7,2)(7,8)}
	\psline[linewidth=1.2pt,linestyle=dashed,dash=3pt 1pt](4,2)(4,8)
  	\SpecialCoor
	\pnode(7,8){t1}
	\psintersect[name=t2]{two}{top}
	\psintersect[name=b2]{two}{bottomback}
	\pnode(7,2){b1}
	\psintersect[name=it]{insert}{top}
	\psintersect[name=ib]{insert}{bottom}
	\psline[linecolor=line, linewidth=1.2pt](it1)(ib1)
	\psdot[linecolor=diskin,dotsize=0.5](5,6.4)
	\psdot[linecolor=diskin,dotsize=0.5](5,3.95)
	\psdot[linecolor=diskin,dotsize=0.5](4,4.4)
	\psdot[linecolor=diskin,dotsize=0.5](4,6.1)
	\pssavepath[linecolor= line,linewidth=1.2pt]{under}{\pscurve[linecolor= line,linewidth=1.2pt](t21)([angle=-90,nodesep=0.5]t21)(1,7)([angle=95,nodesep=0.25]b1)(b1)}
	\pssavepath[linecolor= diskin,linewidth=0.1pt]{over}{\pscurve[linecolor= line,linewidth=1.2pt](t1)([angle=-95,nodesep=0.25]t1)(1,4)([angle=90,nodesep=0.5]b21)(b21)}
	\psintersect[name=x]{over}{under}
	\psdot[linecolor=diskin,dotsize=0.5](x1)
	\pscurve[linecolor= line,linewidth=1.2pt](t1)([angle=-95,nodesep=0.25]t1)(1,4)([angle=90,nodesep=0.5]b21)(b21)
	\pssavepath[linecolor=diskin,linewidth=0.01pt]{backline1}{\psline(1,7)(t21)}
	\pssavepath[linecolor=diskin, linewidth=0.01pt]{backline2}{\psline(1,4)(b21)}
	\psintersect[name=backpointstop]{backline1}{under}
	\psintersect[name=backpointsb]{backline2}{over}
	 \pstracecurve[linecolor=diskin,linewidth=2pt, istart=1, istop=3]{backpointstop}{under}
	  \pstracecurve[linecolor=line,linestyle=dotted,linewidth=1.2pt, istart=1, istop=3]{backpointstop}{under}
	\pstracecurve[linecolor=diskin,linewidth=2pt, istart=2, istop=4]{backpointsb}{over}
	  \pstracecurve[linecolor=line,linestyle=dotted,linewidth=1.2pt, istart=2, istop=4]{backpointsb}{over}
	\pssavepath[linewidth=1.2pt]{top}{\psellipse[linewidth=1.2pt](4,8)(3.025,1)}
	\pssavepath[linewidth = 1.2pt]{left}{\psline(1,2)(1,8)}
	\rput{0}([angle=90,nodesep=0.8]t1){\tiny $1$}
	\rput{0}([angle=90,nodesep=0.8]t21){\tiny $2$}
	\rput{0}([angle=-90,nodesep=0.8]b1){\tiny $1$}
	\rput{0}([angle=-90,nodesep=0.8]b21){\tiny $2$}
	\rput{0}([angle=90,nodesep=0.8]it1){\tiny $3$}
	\rput{0}([angle=-90,nodesep=0.8]ib1){\tiny $3$}
	\end{pspicture}
\end{align*}
It is the unique group homomorphism satisfying
\begin{equation*}
\begin{split}
&\mathcal{I}_n^{br}(\sigma_i)=\sigma_i,\qquad i=1,\ldots,n-1,\\
&\mathcal{I}_n^{br}(\sigma_n)=\sigma_n\sigma_{n+1}\sigma_n^{-1},\\
&\mathcal{I}_n^{br}(\widetilde{\rho})=\widetilde{\rho}\sigma_n^{-1}.
\end{split}
\end{equation*}
Extending $\mathcal{I}_n^{br}$ linearly to an algebra map $\mathcal{I}_n^{br}: 
\mathbb{C}[\mathcal{B}_n]\rightarrow\mathbb{C}[\mathcal{B}_{n+1}]$, we have 
\[
\mu_{n+1}\circ\mathcal{I}_n^{br}=\mathcal{I}\vert_{\cS_{n}}\circ\mu_n
\]
with $\mu_n: \mathbb{C}[\mathcal{B}_n]\rightarrow\cS_{n}$ the algebra map as defined in the previous section. 

In addition, it is easy to show that there exists a unique unit preserving algebra map
$\mathcal{I}_n^{ha}: H_n\rightarrow H_{n+1}$ satisfying
\begin{equation*}
\begin{split}
&\mathcal{I}_n^{ha}(T_i)=T_i,\qquad i=1,\ldots,n-1,\\
&\mathcal{I}_n^{ha}(T_n)=T_nT_{n+1}T_n^{-1},\\
&\mathcal{I}_n^{ha}(\rho)=t^{-\frac{1}{4}}\rho T_n^{-1},
\end{split}
\end{equation*}
and 
\[
\nu_{n+1}\circ\mathcal{I}_n^{br}=\mathcal{I}_n^{ha}\circ\nu_n
\]
with $\nu_n: \mathbb{C}[\mathcal{B}_n]\rightarrow H_n$ as defined in the previous section.
%%%%%%%%%%%%%%%%%%%%%%%%%%%%%%%%%%%%%%%%%

%%%%%%%%%%%%%%%%%%%%%%%%%%%%%%%%%%%%%%%%
\begin{remark}
The maps $\mathcal{I}_n^{br}$ and $\mathcal{I}_n$ were constructed before in 
\cite{AlH,GS}.
\end{remark}
%%%%%%%%%%%%%%%%%%%%%%%%%%%%%%%%%%%%%%%%

We end the section by discussing the relation to the braid group and the affine Temperley-Lieb algebra
of type $B$.
Let $\mathcal{B}_n^B$ be the braid group of type $\text{B}_n$, i.e. the group with generators 
$\sigma_0^B,\ldots,\sigma_{n-1}^B$ and defining relations the braid relations associated to the type $B$ Coxeter diagram
\begin{center}
{\psset{unit=1cm}
 \begin{pspicture}(0,0)(4,1.75)
 %\psgrid
 \psdot[dotsize=0.3](0,1)
 \psdot[dotsize=0.3](1,1)
 \psdot[dotsize=0.3](2,1)
 \psdot[dotsize=0.3](4,1)
 \psline(1,1)(2,1)
 \psline(2,1)(2.5,1)
 \psline[linestyle=dotted](2.5,1)(3.5,1)
 \psline(3.5,1)(4,1)
 \psline(0,1.1)(1,1.1)
  \psline(0,0.9)(1,0.9)
  \rput{0}(0,0.5){$\;\sigma_0^B$}
    \rput{0}(1,0.5){$\;\sigma_1^B$}
      \rput{0}(2,0.5){$\;\sigma_2^B$}
        \rput{0}(4,0.5){$\;\sigma_{n-1}^B$}
 \end{pspicture}  
}
\end{center}
It is known that $\mathcal{B}_n^B$ is isomorphic to the extended affine braid group $\mathcal{B}_n$,
with the isomorphism given by
\begin{equation}\label{isob}
\begin{split}
	\sigma_0^B &\mapsto \rho \sigma^{-1}_{n-1} \cdots \sigma_1^{-1} \\
	\sigma_i^B &\mapsto \sigma_i \qquad (1\leq i < n),
	\end{split}
\end{equation}
see \cite[Rem. 1.1]{GTW} and references therein. 
We discuss now a similar $B$-type presentation of the extended affine Temperley-Lieb algebra $\textup{TL}_n$.

The $B$-type affine Temperley-Lieb algebra $\textup{TL}_n^B$ is defined as follows (see \cite[Thm. 3.13]{Prz}). 
For $n\geq2$, $\textup{TL}^B_n$ is the unital complex associative algebra with generators $\alpha, \tau, e_1,\cdots, e_{n-1}$ and defining relations
\begin{align*}
&e_i^2 = \bigl(-t^{\frac{1}{2}} - t^{-\frac{1}{2}}\bigr) e_i, \\
&e_i e_j = e_j e_i & &\text{ if } |j-i| \geq 2, \\
& e_i e_{i\pm 1} e_i=e_i,\\
&\tau e_i = e_i \tau & &\text{ if } i>1, \\
&e_1\tau e_1 = \alpha e_1=e_1\alpha,\\
&\tau^2 = -t^{\frac{1}{2}} \alpha \tau-t.
\end{align*}
For $n=0$ and $n=1$ we set $\textup{TL}^B_0:=\C[\alpha]$ and  $\textup{TL}^B_1:=\C[\tau,\tau^{-1}]$. 

Note that $\tau$ is invertible with inverse $\tau^{-1} = -t^{-1}\tau-t^{-\frac{1}{2}}\alpha$. 
Hence $\alpha= -t^{-\frac{1}{2}} \tau - t^{\frac{1}{2}}\tau^{-1}$, and $\alpha$ is central.
For $n=1$ we define $\alpha$ by this formula.

Note that the assignments
\begin{align*}
		\sigma_0^B &\mapsto -t^{-\frac{3}{4}} \tau\\
		\sigma_i^B &\mapsto t^{\frac{1}{4}} e_i + t^{-\frac{1}{4}} \qquad(1 \leq i <n)
\end{align*}
define a surjective algebra map $\mu_n^B: \mathbb{C}[\mathcal{B}_n^B]\rightarrow
\textup{TL}_n^B$. In particular, $\textup{TL}_n^B$ is isomorphic to a quotient of the group algebra $\C[\mathcal{B}^B_n]$. 

Recall the algebra map $\mu_n: \C[\mathcal{B}_n] \rightarrow \textup{TL}_n$ from Section \ref{section7}.  The following result is an algebraic reformulation of \cite[Thm. 3.13(a)]{Prz}, see
also Remark \ref{PrzLink}.
%%%%%%%%%%%%%%%%%%%%%%%%%%
\begin{proposition}\label{Bprop}
There exists a unique isomorphism $\textup{TL}_n^B\overset{\sim}{\longrightarrow}\textup{TL}_n$
of algebras such that the diagram
\[
\setlength{\arraycolsep}{1cm}
\begin{array}{cc}
\Rnode{a}{\C[\mathcal{B}^B_n]} & \Rnode{b}{\C[\mathcal{B}_n]}\\[1.5cm]
\Rnode{c}{\textup{TL}^B_n} & \Rnode{d}{\textup{TL}_n}\\[1.5cm]
\end{array}
\psset{nodesep=5pt,arrows=->,arrowsize=2pt 3}
\ncLine{a}{b}\Aput{\sim}
\ncLine{c}{d}\Aput{\sim}
\psset{nodesep=5pt,arrows=->>, arrowsize=2pt 3}
\ncLine{a}{c}\Bput{\mu^B_n}
\ncLine{b}{d}\Aput{\mu_n}
\]
of algebra maps is commutative, with the isomorphism $\mathbb{C}[\mathcal{B}_n^B]
\overset{\sim}{\longrightarrow}\mathbb{C}[\mathcal{B}_n]$ given by \eqref{isob}.
\end{proposition}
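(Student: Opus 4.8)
\emph{Strategy.} The square is assembled from the two surjective algebra maps $\mu_n^B\colon\mathbb{C}[\mathcal{B}_n^B]\twoheadrightarrow\textup{TL}_n^B$ and $\mu_n\colon\mathbb{C}[\mathcal{B}_n]\twoheadrightarrow\textup{TL}_n$ together with the isomorphism $\iota\colon\mathbb{C}[\mathcal{B}_n^B]\overset{\sim}{\longrightarrow}\mathbb{C}[\mathcal{B}_n]$ induced by \eqref{isob}. Since $\mu_n^B$ is surjective, an algebra map $g\colon\textup{TL}_n^B\to\textup{TL}_n$ with $g\circ\mu_n^B=\mu_n\circ\iota$ is automatically unique, exists exactly when $\ker\mu_n^B\subseteq\ker(\mu_n\circ\iota)$, and is an isomorphism exactly when in addition $\ker(\mu_n\circ\iota)\subseteq\ker\mu_n^B$, i.e.\ when the two kernels correspond under $\iota$. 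The cases $n\in\{0,1\}$ are immediate: there $\textup{TL}_n^B$ and $\textup{TL}_n$ are each a polynomial (resp.\ Laurent polynomial) algebra in one variable, and the definitions of $\mu_n^B$, $\mu_n$ together with \eqref{isob} force the identification; so I take $n\ge 2$ below.

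\emph{Construction of $g$.} First I would read off the forced images of the generators of $\textup{TL}_n^B$: pushing $\sigma_i^B$ and $\sigma_0^B$ through \eqref{isob} and applying $\mu_n$ (using $\mu_n(\widetilde{\rho})=\rho$, $\mu_n(\sigma_i)=t^{\frac14}e_i+t^{-\frac14}$ and $\mu_n(\sigma_i^{-1})=t^{-\frac14}e_i+t^{\frac14}$) shows one must set
\[
g(e_i):=e_i\quad(1\le i\le n-1),\qquad g(\tau):=-t^{\frac34}\,\rho\,(t^{-\frac14}e_{n-1}+t^{\frac14})\cdots(t^{-\frac14}e_1+t^{\frac14}),
\]
together with $g(\alpha):=-t^{-\frac12}g(\tau)-t^{\frac12}g(\tau)^{-1}$; the element $g(\tau)$ is a product of units of $\textup{TL}_n$, hence invertible, so $g(\alpha)$ is defined. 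Next I would check that $g$ respects the defining relations of $\textup{TL}_n^B$. The relations among $e_1,\dots,e_{n-1}$ (the quadratic, the commutations for $|i-j|\ge 2$, and $e_ie_{i\pm1}e_i=e_i$) hold verbatim in $\textup{TL}_n$, because a pair $i,j\in\{1,\dots,n-1\}$ with $|i-j|\ge 2$ is never adjacent modulo $n$; the relation $\tau^2=-t^{\frac12}\alpha\tau-t$ is automatic once $\alpha$ is given by the stated formula and $\tau$ is invertible; and the relations $\tau e_i=e_i\tau$ ($i>1$) and $e_1\tau e_1=\alpha e_1=e_1\alpha$ follow, after clearing scalars, by applying $\mu_n$ to the type-$B$ braid relations between $\sigma_0^B$ and $\sigma_i^B$ transported through \eqref{isob}, together with the quadratic relation satisfied by $t^{\frac14}e_i+t^{-\frac14}$. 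This produces an algebra map $g$ with $g\circ\mu_n^B=\mu_n\circ\iota$ by construction.

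\emph{Bijectivity.} The map $g$ is surjective: $e_i\in\operatorname{im}(g)$ for $i<n$, hence $\rho=-t^{-\frac34}g(\tau)(t^{\frac14}e_1+t^{-\frac14})\cdots(t^{\frac14}e_{n-1}+t^{-\frac14})\in\operatorname{im}(g)$, and then also $\rho^{-1}$ and $e_n=\rho e_{n-1}\rho^{-1}$, so $\operatorname{im}(g)$ contains all algebra generators of $\textup{TL}_n$. For injectivity there are two routes. One is to build the inverse $h\colon\textup{TL}_n\to\textup{TL}_n^B$ symmetrically from $\iota^{-1}$ and $\mu_n^B$ — so $h(e_i)=e_i$ for $i<n$, $h(\rho)=\mu_n^B(\iota^{-1}(\widetilde{\rho}))=(-t^{-\frac34}\tau)(t^{\frac14}e_1+t^{-\frac14})\cdots(t^{\frac14}e_{n-1}+t^{-\frac14})$, and $h(e_n)=h(\rho)e_{n-1}h(\rho)^{-1}$ — verify that $h$ respects the presentation \eqref{relTL} of $\textup{TL}_n$, and observe that $g$ and $h$ are mutually inverse because they agree with mutually inverse maps on generators. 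The other route is to invoke Przytycki \cite[Thm.~3.13(a)]{Prz} (see Remark \ref{PrzLink}), which exhibits a spanning set of $\textup{TL}_n^B$ in bijection with the affine $(n,n)$-diagram basis of $\mathcal{TL}_n\simeq\textup{TL}_n$ from Theorem \ref{presentationThm}, forcing the surjection $g$ to be an isomorphism. Uniqueness has already been noted.

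\emph{Main obstacle.} The real work sits in the mixed relations: for $g$, checking $e_1\tau e_1=\alpha e_1=e_1\alpha$ — the only relation in which the definition of $\alpha$ and the invertibility of $\tau$ genuinely interact, and which encodes the length-four braid relation between $\sigma_0^B$ and $\sigma_1^B$ — and, along the first injectivity route, checking the affine wrap-around relation $\rho^2e_{n-1}=e_1e_2\cdots e_{n-1}$ (equivalently $(\rho e_1)^{n-1}=\rho^n(\rho e_1)$) inside $\textup{TL}_n^B$. Both are routine but bookkeeping-heavy identities, carried out either directly in the relevant algebra or in the extended affine braid group before applying $\mu_n$, respectively $\mu_n^B$; this is exactly the combinatorial content that \cite[Thm.~3.13(a)]{Prz} packages skein-theoretically, so in practice the most economical write-up imports it from there.
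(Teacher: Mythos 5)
Your proposal is correct and follows essentially the same route as the paper: you construct exactly the paper's maps $f_n$ (your $g$, with $f_n(\tau)=-t^{3/4}\rho\,\mu_n(\sigma_{n-1}^{-1})\cdots\mu_n(\sigma_1^{-1})$) and $g_n$ (your $h$) on generators, note uniqueness from surjectivity of $\mu_n^B$, and defer the same relation checks the paper omits as "straightforward". One small caution: the relation $e_1\tau e_1=\alpha e_1$ does \emph{not} follow from the image of the type-$B$ braid relation plus the quadratic relation alone (it is an extra Temperley--Lieb-level identity requiring the relations $e_ie_{i\pm1}e_i=e_i$ and the wrap-around relation of $\textup{TL}_n$), but your closing paragraph correctly relocates that verification to a direct computation in the algebra, which is precisely the bookkeeping the paper also leaves to the reader.
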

%%%%%%%%%%%%%%%%%%%%%%%%%%%%%
\begin{proof}
We have to show that there exists a well defined algebra map
$f_n: \textup{TL}_n^B\rightarrow \textup{TL}_n$ satisfying
$f_n(e_i)=e_i$ ($1\leq i<n$) and 
\[
f_n(\tau)= -t^{\frac{3}{4}} \rho \mu_n(\sigma_{n-1}^{-1})\cdots\mu_n(\sigma_1^{-1}),
\]
and that there exists a well defined algebra map $g_n: \textup{TL}_n\rightarrow\textup{TL}_n^B$
satisfying $g_n(e_i)=e_i$ ($1\leq i<n$) and 
\[
g_n(\rho)=-t^{-\frac{3}{4}}\tau\mu_n^B(\sigma_1^B)\cdots\mu_n^B(\sigma_{n-1}^B).
\]
We omit the proof as it is a straightforward check that all the algebra relations are respected by 
$f_n$ and $g_n$. For these checks it is convenient to use the presentation of $\textup{TL}_n$ in terms of the generators $\rho^{\pm 1},e_1,\ldots,e_{n-1}$ as given in Remark \ref{restrictedgenerators}.
\end{proof}
%%%%%%%%%%%%%%%%%%%%%%%%%%%%%%%%%%%%%%%%%%%%%%%%
\begin{remark}\label{PrzLink}
Combining Proposition \ref{Bprop} with Theorem \ref{presentationThm} and Remark \ref{isoRem}
yields an isomorphism $\textup{TL}_n^B\overset{\sim}{\longrightarrow}\textup{End}_{\mathcal{S}}(n)$
given by 
\begin{equation*}
\tau \longmapsto -t^{\frac{3}{4}} 
{\psset{unit=.8cm}
 \begin{pspicture}[shift=-1.9](-2,-2)(2,2)
    	\pscircle[fillstyle=solid,fillcolor=diskin,linewidth=1.2pt](0,0){1.5}
    	\pscircle[fillstyle=solid, fillcolor=white, linewidth=1.2pt](0,0){0.5}
	\degrees[12]       
    \psline[linecolor=line,linewidth=1.2pt](1.5;1)(0.5;1)
    \psline[linecolor=line,linewidth=1.2pt](1.5;2)(0.5;2)
    \psline[linecolor=line,linewidth=1.2pt](1.5;-1)(0.5;-1)
      \psdot[linecolor=diskin,dotsize=0.3](0.8;1)
          \psdot[linecolor=diskin,dotsize=0.3](0.8;2)
           \psdot[linecolor=diskin,dotsize=0.3](1.2;-1)
        \rput{0}(0;0){ \psplot[algebraic, plotpoints=400,linecolor=line,polarplot=true,linewidth=1.2pt]{0}{0.5 Pi mul }{(2/Pi)*ASIN(x/Pi-1) +(3*Pi-x)/(2*Pi)}}
      \rput{0}(0;0){ \psplot[algebraic, plotpoints=400,linecolor=line,linestyle=dotted,polarplot=true,linewidth=1.2pt]{0.51 Pi mul}{1.7 Pi mul }{(2/Pi)*ASIN(x/Pi-1) +(3*Pi-x)/(2*Pi)}}
        \rput{0}(0;0){ \psplot[algebraic, plotpoints=400,linecolor=line,polarplot=true,linewidth=1.2pt]{1.71 Pi mul}{2 Pi mul }{(2/Pi)*ASIN(x/Pi-1) +(3*Pi-x)/(2*Pi)}}
    	\rput{0}(1.8;0){\tiny$1$}
	\rput{0}(1.8;1){\tiny$2$}
	\rput{0}(1.8;2){\tiny$3$}
	\rput{0}(1.8;-1){\tiny$n$}
	\rput{0}(0.3;0){\tiny$1$}	
   \end{pspicture}  
},
\qquad	e_i \longmapsto    \begin{pspicture}[shift=-1.65](-1.75,-1.75)(1.5,1.5)
     	\SpecialCoor
    	\pscircle[fillstyle=ccslope,slopebegin=diskin,slopeend=diskout,linewidth=1.2pt](0,0){1.5}
    	\pscircle[fillstyle=solid, fillcolor=white, linewidth=1.2pt](0,0){0.5}
    	\degrees[17]
     	\rput{0}(1.8;0){\tiny $1$}
	\rput{0}(1.8;9.8){\tiny $i\!-\!1$}
    	\rput{0}(1.8;10.9){\tiny $i$}
	\rput{0}(1.8;11.9){\tiny $i\!+\!1$}
	\rput{0}(1.8;13.3){\tiny $i\!+\!2$}
     	\rput{0}(0.25;0){\tiny $1$}
    	\rput{0}(0.25;10.8){\tiny $i$}
   	\psline[linecolor=line,linewidth=1.2pt](0.5;0)(1.5;0)
	\psline[linecolor=line,linewidth=1.2pt](0.5;10)(1.5;10)
	\psline[linecolor=line,linewidth=1.2pt](0.5;13)(1.5;13)
	\pscurve[linecolor=line,linewidth=1.2pt](0.5;11)(0.6;11)(0.75;11.5)(0.6;12)(0.5;12)
	\pscurve[linecolor=line,linewidth=1.2pt](1.5;11)(1.15;11)(0.9;11.5)(1.15;12)(1.5;12)
	\psarc[linestyle=dotted, linecolor=line,linewidth=1.2pt](0,0){0.85}{1}{9}
	\psarc[linestyle=dotted, linecolor=line,linewidth=1.2pt](0,0){0.85}{14}{16}		
   \end{pspicture} 
\end{equation*}
for $1\leq i<n$. Note that under this isomorphism,
\begin{equation*}
\alpha \longmapsto 
 \begin{pspicture}[shift=-1.9](-2,-2)(2,2)
    	\pscircle[fillstyle=solid,fillcolor=diskin,linewidth=1.2pt](0,0){1.5}
    	\pscircle[fillstyle=solid, fillcolor=white, linewidth=1.2pt](0,0){0.5}
	\degrees[12]       
	 \psline[linecolor=line,linewidth=1.2pt](1.5;0)(0.5;0)
    \psline[linecolor=line,linewidth=1.2pt](1.5;1)(0.5;1)
    \psline[linecolor=line,linewidth=1.2pt](1.5;-1)(0.5;-1)
      \psdot[linecolor=diskin,dotsize=0.3](1;0)
      \psdot[linecolor=diskin,dotsize=0.3](1;1)
          \psdot[linecolor=diskin,dotsize=0.3](1;2)
           \psdot[linecolor=diskin,dotsize=0.3](1;-1)
         \pscircle[linecolor=line,linewidth=1.2pt](0,0){1}
       	\rput{0}(1.8;0){\tiny$1$}
	\rput{0}(1.8;1){\tiny$2$}
	\rput{0}(1.8;-1){\tiny$n$}
	\rput{0}(0.3;0){\tiny$1$}	
   \end{pspicture}  
\end{equation*}
This is the algebra isomorphism $\textup{TL}_n^B\overset{\sim}{\longrightarrow}\textup{End}_{\mathcal{S}}(n)$ from
\cite[Thm. 3.13(a)]{Prz}.
\end{remark}
%%%%%%%%%%%%%%%%%%%%%%%%%%%%%%%%

Using the B type presentation of $\textup{TL}_n$  the algebra maps
$\mathcal{I}_n: \textup{TL}_n\rightarrow\textup{TL}_{n+1}$ takes on a simple form.

%%%%%%%%%%%%%%%%%%%%%%%%%%%%%%
\begin{corollary}\label{insertionB}
The algebra maps $\mathcal{I}_n: \textup{TL}_n \rightarrow  \textup{TL}_{n+1}$, viewed as algebra maps  $ \textup{TL}^B_n \rightarrow  \textup{TL}^B_{n+1}$ via the identification  $\textup{TL}^B_n \cong  \textup{TL}_{n}$, satisfies $\tau \mapsto \tau$ and  $e_i \mapsto e_i$
for $1\leq i<n$.
\end{corollary}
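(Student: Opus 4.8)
The plan is to verify the claim on the $B$-type generators $\tau$ and $e_1,\dots,e_{n-1}$ of $\textup{TL}_n^B$, tracing them through the composite
\[
\textup{TL}_n^B\xrightarrow{\ \sim\ }\textup{TL}_n\xrightarrow{\ \mathcal{I}_n\ }\textup{TL}_{n+1}\xrightarrow{\ \sim\ }\textup{TL}_{n+1}^B,
\]
where the outer isomorphisms are those of Proposition \ref{Bprop} (equivalently Remark \ref{PrzLink}). First I would recall from the proof of Proposition \ref{Bprop} the explicit form of the left-hand isomorphism: it sends $e_i\mapsto e_i$ for $1\le i<n$ and
\[
\tau\longmapsto -t^{\frac{3}{4}}\,\rho\,\mu_n(\sigma_{n-1}^{-1})\cdots\mu_n(\sigma_1^{-1})
= -t^{\frac{3}{4}}\,\rho\,(t^{-\frac14}e_{n-1}+t^{\frac14})\cdots(t^{-\frac14}e_1+t^{\frac14}),
\]
using $\mu_n(\sigma_i^{-1})=t^{-\frac14}e_i+t^{\frac14}$; and the analogous formula one rank higher describes the image of $\tau\in\textup{TL}_{n+1}^B$ in $\textup{TL}_{n+1}$.

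For the generators $e_i$ with $1\le i<n$, Proposition \ref{insertionProp} gives $\mathcal{I}_n(e_i)=e_i$ in $\textup{TL}_{n+1}$, and this is exactly the image of $e_i\in\textup{TL}_{n+1}^B$ under the right-hand isomorphism. Hence the composite fixes each such $e_i$.

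For $\tau$, I would apply $\mathcal{I}_n$ to the displayed product. Since $\mathcal{I}_n$ is a unit-preserving algebra homomorphism with $\mathcal{I}_n(\rho)=\rho(t^{-\frac14}e_n+t^{\frac14})$ and $\mathcal{I}_n(e_i)=e_i$ for $1\le i<n$ (Proposition \ref{insertionProp}), this yields
\[
-t^{\frac34}\,\rho\,(t^{-\frac14}e_n+t^{\frac14})(t^{-\frac14}e_{n-1}+t^{\frac14})\cdots(t^{-\frac14}e_1+t^{\frac14}),
\]
which, because $\mu_{n+1}(\sigma_i^{-1})=t^{-\frac14}e_i+t^{\frac14}$ for $1\le i\le n$, is precisely the image of $\tau\in\textup{TL}_{n+1}^B$ in $\textup{TL}_{n+1}$. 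Thus the composite sends $\tau\mapsto\tau$. Since $\tau$ is invertible and $\alpha=-t^{-\frac12}\tau-t^{\frac12}\tau^{-1}$ is central, it follows automatically that $\mathcal{I}_n(\alpha)=\alpha$, which also covers the generator of $\textup{TL}_1^B$; the analogous low-rank adjustments for $n=1,2$ are the obvious ones, exactly as elsewhere in this appendix.

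The computation is entirely routine, so there is no genuine obstacle; the only points deserving a little care are the bookkeeping of the index ordering in the product — the extra factor $t^{-\frac14}e_n+t^{\frac14}$ produced by $\mathcal{I}_n(\rho)$ must end up in the leftmost position so as to match the rank-$(n+1)$ formula for $\tau$ — and the separate (easy) treatment of the small cases $n\le 2$. Accordingly the corollary follows directly from Propositions \ref{insertionProp} and \ref{Bprop}.
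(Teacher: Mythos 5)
Your proposal is correct and is essentially the argument the paper intends: the corollary is stated without proof as an immediate consequence of Propositions \ref{insertionProp} and \ref{Bprop}, and your computation — pushing $\tau=-t^{3/4}\rho\,\mu_n(\sigma_{n-1}^{-1})\cdots\mu_n(\sigma_1^{-1})$ through $\mathcal{I}_n$ so that the extra factor $t^{-1/4}e_n+t^{1/4}$ from $\mathcal{I}_n(\rho)$ lands in exactly the right (leftmost) slot of the rank-$(n+1)$ formula — is precisely the verification needed. Your remark that $\alpha\mapsto\alpha$ follows and that the low-rank cases work the same way is also consistent with the paper's conventions.
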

%%%%%%%%
Using Corollary \ref{insertionB} in combination with \cite[Thm. 3.13(b)]{Prz} it follows that
the algebra maps $\mathcal{I}_n:\textup{TL}_n\rightarrow\textup{TL}_{n+1}$ are injective.

 \bibliographystyle{plain}
\bibliography{bibliography,personalnotes} 

\end{document}